\def\cydot{\leavevmode\raise.4ex\hbox{.}} % for Iyudu
\newcommand{\norm}[1]{\lVert#1\rVert}
\newcommand{\SLnR}{\mathrm{SL}_{n} (\mathbb{R})}
\newcommand{\SLnZ}{\mathrm{SL}_{n} (\mathbb{Z})}
\newcommand{\SLnQ}{\mathrm{SL}_{n} (\mathbb{Q})}
\newcommand{\R}{\mathbb{R}}
\newcommand{\Z}{\mathbb{Z}}
\newcommand{\N}{\mathbb{N}}
\newcommand{\Q}{\mathbb{Q}}
\newcommand{\CAT}[1]{\mathrm{CAT}(#1)}
\newcommand{\bdry}[1]{\partial_{\infty}#1}
\newcommand{\cone}[3]{\mathrm{Cone}_{\omega}(#1,#2,#3)}
\newcommand{\Lambdaplus}{{\Lambda}^{+}}
\newcommand{\ad}{\mathrm{ad}}
\newcommand{\Lie}[1]{\mathfrak{#1}}
\newcommand{\Dmod}{\Delta_{\mathrm{mod}}}
\newcommand{\hgt}{\mathrm{ht}}
\newcommand{\MIN}[1]{\mathrm{MIN}(#1)}
\newcommand{\IPp}{\varphi_p}
\newcommand{\Stab}{\mathrm{Stab}}
\newcommand{\modulus}[1]{\left| #1 \right|}
\newcommand{\CLF}{\mathrm{CLF}}
\newcommand{\rrank}{\mathrm{rank}_\R}
\newcommand{\qrank}{\mathrm{rank}_\Q}
\newcommand{\Out}{\mathrm{Out}}
\newtheorem{thm}{Theorem}[section]
\newtheorem{cor}[thm]{Corollary}
\newtheorem{lemma}[thm]{Lemma}
\newtheorem{prop}[thm]{Proposition}
\newtheorem{thmspecial}{Theorem}
\newtheoremstyle{citing}% name
   {3pt}%      Space above, empty = `usual value'
   {3pt}%      Space below
   {\itshape}% Body font
   {}%         Indent amount (empty = no indent, \parindent = para indent)
   {\bfseries}% Thm head font
   {}%        Punctuation after thm head
   {.5em}%     Space after thm head: " " = normal interword space;
 \theoremstyle{citing}
 \newtheorem*{varthm}{}% all text supplied in the note
\newcommand{\scheading}[1]{\vspace{2mm}

\noindent\textsc{#1:}
\vspace{2mm}

\noindent \hspace{-3.3pt}}
\newenvironment{remark}[1][Remark]{\begin{trivlist}
\item[\hskip \labelsep {\bf #1}:]}{\end{trivlist}}
\newenvironment{question}[1][Question]{\begin{trivlist}
\item[\hskip \labelsep {\bf #1}:]}{\end{trivlist}}
\newcounter{algorithm}
\newenvironment{algorithm}[1][Algorithm]{\refstepcounter{algorithm}\begin{trivlist}
\item[\hskip \labelsep {\bfseries #1 \Alph{algorithm}.}]} {\end{trivlist}}
\title[Bounded Conjugators in Semisimple Lie Groups]{Bounded Conjugators For Real Hyperbolic and Unipotent Elements in Semisimple Lie Groups}
\author{Andrew Sale} \email{andrew.sale@some.oxon.org}
\date{}
\thanks{The author was supported by the EPSRC}
\begin{document}

\begin{abstract}
Let $G$ be a real semisimple Lie group with trivial centre and no compact factors. Given a conjugate pair of either real hyperbolic elements or unipotent elements $a$ and $b$ in $G$ we find a conjugating element $g \in G$ such that $d_G(1,g) \leq L(d_G(1,u)+d_G(1,v))$, where $L$ is a positive constant which will depend on some property of $a$ and $b$. For the vast majority of such elements however, $L$ can be assumed to be a uniform constant.
\end{abstract}

\maketitle

\setcounter{tocdepth}{1}
\tableofcontents

The objective of this paper is to present results concerning an effective version of the conjugacy problem in the setting of semisimple Lie groups. We focus on finding short conjugators between real hyperbolic elements and unipotent elements.

The conjugacy problem is one of Max Dehn's three decision problems in group theory, which he set out in 1912, motivated by questions in low-dimensional manifolds. The other problems are the word problem and the isomorphism problem. These three problems are fundamental in realms of combinatorial and geometric group theory and have received much attention over the last century. Dehn originally described these problems in group theory because of the significance he discovered they had in the geometry of $3$--manifolds. He observed the interplay that occurs between the fundamental group of the manifold and its geometry. For example, the conjugacy problem in the fundamental group is equivalent to determining when two loops in the manifold are freely homotopic.

Let $\Gamma$ be a recursively presented group with finite symmetric generating set $A$. The \emph{word problem} on $\Gamma$ asks whether there is an algorithm which determines when any given word on the generating set $A$ represents the identity element of $\Gamma$. Associated to the word problem is the Dehn function, which measures its geometric complexity. It is a measure of the minimal area required to fill a loop in the Cayley $2$--complex of $G$. Because of this geometric interpretation, determining the Dehn function of groups has been a fundamental question in geometric group theory over the last couple of decades. The extra information the Dehn function provides means that we could describe calculating it as an \emph{effective} version of the word problem.

The \emph{conjugacy problem} is of a similar flavour to the word problem. We say the conjugacy problem in $\Gamma$ is solvable if there is an algorithm which, on input two words $u$ and $v$ on the generating set $A$, determines whether $u$ and $v$ represent conjugate elements in $\Gamma$.

\subsection*{An effective version of the conjugacy problem} Estimating the length of short conjugating elements in a group could be described as an \emph{effective} version of the conjugacy problem. Suppose a group $G$ admits a left-invariant metric $d_G$. For $g \in G$ let $\modulus{g}$ denote $d_G(1,g)$. The \emph{conjugacy length function} is the minimal function $\CLF_G : \R_{\geq 0} \rightarrow \R_{\geq 0}$ which satisfies the following: for $x \in \R_{\geq 0}$, if $u$ is conjugate to $v$ in $G$ and $\modulus{u}+\modulus{v}\leq x$ then there exists a conjugator $g \in G$ such that $\modulus{g} \leq \CLF_G(x)$. One can define it more concretely to be the function which sends $x \in \R_{\geq 0}$ to
				$$\sup \big\{ \inf \{ \modulus{g} : gu=vg \} : \modulus{u} + \modulus{v} \leq x \ \textrm{and $u$ is conjugate to $v$ in $G$} \big\}\textrm{.}$$ 
The question of determining conjugacy length functions has been addressed previously. See for example \cite{Sale12} and \cite{Sale12groupext} for results concerning groups including free solvable groups, wreath products, group extensions and abelian-by-cyclic groups. We know that the conjugacy length function is linear for groups including hyperbolic groups \cite{BH99}, Right-angled Artin groups \cite{CGW09} and Mapping class groups \cite{MM00}, \cite{BD11}, \cite{Tao11}. For $\CAT{0}$--groups and biautomatic groups all we know is that it is at most exponential (see \cite{BH99}), it is an open question as to whether this bound is sharp and indeed we do not even know if it is not necessarily linear. In their work on the stronger $\ell^1$--Bass conjecture, Ji, Ogle and Ramsey show $2$--step nilpotent groups have a quadratic conjugacy length function \cite{JOR10}, and also obtain a result for relatively hyperbolic groups. The fundamental group of a prime $3$--manifold also has a quadratic upper bound \cite{BD11}, \cite{Sale12groupext}.

The reader should note that, unlike the conjugacy problem itself, in order to define the conjugacy length function the only requirement on $\Gamma$ is that it should admit a left-invariant metric. In particular this means that we can define it for Lie groups.

In this paper we take the first steps towards understanding the conjugacy length function of higher-rank real semisimple Lie groups and their lattices by studying the conjugacy of real hyperbolic elements and unipotent elements.

Grunewald and Segal solved the conjugacy problem in arithmetic groups \cite{GS80} and hence, by Margulis arithmeticity, we know that every lattice in a higher-rank real semisimple Lie group has solvable conjugacy problem. However, as discussed in \cite{GI05}, their solution gives no insight into the length of the conjugating element. Determining a control on the lengths of short conjugators in lattices would not only be interesting in its own right, but would also provide us with a method of proving the solubility of the conjugacy problem via a method which does not rely on the power of Margulis arithmeticity.

\subsection*{Real hyperbolic elements}  Analogies have frequently been drawn between lattices in higher-rank semisimple Lie groups and mapping class groups. The pseudo-Anosov elements of a mapping class group are the elements which behave in a similar way to the real hyperbolic elements. Recently J. Tao \cite{Tao11} showed that mapping class groups have linear conjugacy length functions, however earlier work of Masur and Minsky \cite{MM00} showed that there is a linear bound on the length of short conjugators between a pair of pseudo-Anosov elements. Hence, following the analogy through to lattices, it is natural to begin approaching this problem by studying the real hyperbolic elements. The analogy also carries through to $\Out(F_n)$, the outer automorphism group of a free group $F_n$. Here we do not yet know if the conjugacy problem is solvable, however Lustig \cite{Lust07} has shown that it is solvable when we restrict to iwip elements, which are the elements analogous to the pseudo-Anosov and hyperbolic elements.

Theorem \ref{thmspecial:bounded conj in G} below describes the main result of this paper for real hyperbolic elements. Let $G$ be a real semisimple Lie group with trivial centre and no compact factors. An element $a \in G$ is said to be real hyperbolic if it translates some geodesic in the associated symmetric space $X$ and furthermore it also translates every other geodesic parallel to the first. The slope of $a$ describes how these geodesics sit inside the Weyl chambers of $X$. We formalise these definitions in Section \ref{sec:preliminaries}.

\begin{thmspecial}\label{thmspecial:bounded conj in G}
For each slope $\xi$ there exist positive constants $\ell_\xi$ and $d_\xi$ such that if $a$ and $b$ are conjugate real hyperbolic elements in $G$ with slope $\xi$ and such that $d_X(p,ap),d_X(p,bp)\geq d_\xi$ then there exists a conjugator $g \in G$ satisfying:
$$d_X(p,gp) \leq 2\ell_\xi \big(d_X(p,ap)+d_X(p,bp)\big)\textrm{.}$$
\end{thmspecial}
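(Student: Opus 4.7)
The approach is to start with any conjugator $g_0 \in G$ satisfying $g_0 a g_0^{-1}=b$ and to improve it by left multiplication by an element of $Z_G(b)$, using the fact that the full coset of conjugators is $Z_G(b)\,g_0$. Denote by $p_a$, respectively $p_b$, the nearest-point projection of $p$ onto the closed, convex, totally geodesic submanifold $\MIN{a}$, respectively $\MIN{b}$; these projections exist and are unique because $X$ is $\CAT{0}$. The crucial structural fact I will use is that $Z_G(a)^0$ is a reductive Levi subgroup of $G$ whose associated symmetric space is isometric to $\MIN{a}$, so $Z_G(a)^0$ acts transitively on $\MIN{a}$; the same holds for $b$. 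Since $g_0 \MIN{a} = \MIN{b}$, the point $g_0 p_a$ lies in $\MIN{b}$, and transitivity of $Z_G(b)^0$ on $\MIN{b}$ yields some $h \in Z_G(b)^0$ with $h(g_0 p_a) = p_b$. Setting $g := hg_0$, one still has $g a g^{-1}=b$ but now $g p_a = p_b$, hence
$$d_X(p,gp) \;\leq\; d_X(p,p_b) + d_X(gp_a, gp) \;=\; d_X(p,p_b) + d_X(p,p_a)\textrm{.}$$

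To finish, it suffices to establish a projection estimate
$$d_X(p, \MIN{a}) \;\leq\; \ell_\xi\, d_X(p, ap)$$
whenever $a$ has slope $\xi$ and $d_X(p,ap) \geq d_\xi$, together with the analogous bound for $b$ (with the same constants, since $a$ and $b$ share the slope $\xi$). Applied to both and allowing a factor of two to absorb the difference between $d_X(p,\MIN{a})$ and $d_X(p,p_a)$, this yields the theorem's bound $d_X(p,gp) \leq 2\ell_\xi\bigl(d_X(p,ap)+d_X(p,bp)\bigr)$. The estimate itself is geometrically natural: the displacement function $\delta_a(x) = d_X(x,ax)$ is convex, attains its minimum $\ell(a)$ exactly on $\MIN{a}$, and grows in transverse directions at a rate governed by the roots of $\Lie{g}$ nonvanishing on the direction translated by $a$. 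The slope $\xi$ records precisely which roots these are: a regular slope has every positive root contribute, giving a small $\ell_\xi$, whereas a slope on a wall worsens the constant depending on how singular the slope is.

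The main obstacle is proving this projection estimate uniformly in $a$ of fixed slope. The natural route is to write $a = \exp(X)$ with $X \in \aplus$ in the direction of $\xi$, decompose the displacement of $p$ from $p_a$ into its components along the root spaces of $\Lie{g}$ transverse to $\MIN{a}$, and compute $d_X(p,ap)$ by analysing $\Ad(a)$ on each such component. The eigenvalues that appear are $e^{\pm\alpha(X)}$ for the roots $\alpha$ nonvanishing on $X$, and the hypothesis $d_X(p,ap) \geq d_\xi$ is precisely what guarantees that the leading linear contributions from these root spaces dominate the higher-order commutator corrections arising from the Baker--Campbell--Hausdorff expansion. Making every constant explicit and uniform across all $a$ of slope $\xi$ is the delicate part, but continuity in $a$ together with the lower bound provided by $d_\xi$ should suffice.
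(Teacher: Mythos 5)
Your reduction of the theorem to a projection estimate is correct and essentially the same as the paper's (Propositions \ref{lem:conjugator and MIN sets} and \ref{prop:distance to flat bounded implies conjugator bounded}): your left-multiplication by $h\in Z_G(b)^0$ to align $g_0 p_a$ with $p_b$ plays the same role as the paper's right-multiplication by an element of $G_{\pi_a(p)}$, and the resulting triangle-inequality bound $d_X(p,gp)\leq d_X(p,p_a)+d_X(p,p_b)$ is exactly the paper's. (A small slip: $d_X(p,\MIN{a})=d_X(p,p_a)$ by definition of nearest-point projection, so there is no ``factor of two to absorb'' at that step; the $2\ell_\xi$ in the theorem comes from the projection estimate itself, which the paper proves with constant $2\ell_\xi$, not $\ell_\xi$.)

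The genuine gap is in the projection estimate, which is the entire substance of the theorem --- it is Lemma \ref{lemma:distance to MIN in symmetric space}, whose proof occupies most of the section. Your sketch of it is both a different route from the paper's and is not actually a proof. Two problems. First, decomposing the displacement from $p_a$ to $p$ into root-space components and reading off $d_X(p,ap)$ from the $\Ad(a)$-eigenvalues $e^{\pm\alpha(X)}$ is not a direct computation: the displacement is a Riemannian geodesic segment, not a Lie algebra vector, and relating its length under translation by $a$ to the adjoint action requires Jacobi field or horospherical estimates, which the Baker--Campbell--Hausdorff expansion does not provide on its own. Second, and more fundamentally, ``continuity in $a$ together with the lower bound $d_\xi$ should suffice'' does not give uniformity, because the parameter space is non-compact: $a$ ranges over all real hyperbolic elements of slope $\xi$ with unbounded translation length, and $p$ over all of $X$. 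Establishing a uniform $\ell_\xi$ across this non-compact family is precisely the difficulty, and the paper addresses it by passing to an asymptotic cone --- a Euclidean building, by Kleiner--Leeb --- where the rigidity of angles between geodesics of a fixed $\Dmod$-direction furnishes a uniform lower bound on the relevant angle, hence the constant $\ell_\xi = 1/\sin\phi$ of Proposition \ref{prop:distance to MIN in Euclidean building when regular}; the degenerate regimes (Cases 2 and 3 of the proof) then require constructing sequences of quadrilaterals, controlling the ratios $t_n/D_n$ and $d_n/D_n$, and extracting Hausdorff limits to derive a contradiction. Without a replacement for that analysis (or a rigorous compactification argument of your own), the projection estimate remains unproved and the theorem does not follow.
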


This leads naturally to the following result for lattices:

\begin{thmspecial}\label{thmspecial:bounded conj in G from lattice}
Let $\Gamma$ be a lattice in $G$. Then for each slope $\xi$, there exists a constant $L_\xi$ such that two elements $a,b \in \Gamma$ are conjugate in $G$ if and only if there exists a conjugator $g \in G$ such that
		$$d_X(p,gp) \leq L_\xi \big(d_X(p,ap)+d_X(p,bp)\big)\textrm{.}$$
\end{thmspecial}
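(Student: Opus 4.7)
The plan is to deduce Theorem~\ref{thmspecial:bounded conj in G from lattice} from Theorem~\ref{thmspecial:bounded conj in G} by treating separately the finitely many $\Gamma$-elements of small displacement. The ``if'' direction is immediate. For the ``only if'' direction, fix a slope $\xi$ and let $\ell_\xi, d_\xi$ be as in Theorem~\ref{thmspecial:bounded conj in G}, and suppose $a,b \in \Gamma$ are conjugate in $G$ with slope $\xi$. If both $d_X(p,ap) \geq d_\xi$ and $d_X(p,bp) \geq d_\xi$, Theorem~\ref{thmspecial:bounded conj in G} yields a conjugator of length at most $2\ell_\xi(d_X(p,ap)+d_X(p,bp))$ and we are done.

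Otherwise at least one of $a,b$, say $a$, lies in
$$S_\xi := \{ c \in \Gamma : d_X(p,cp) < d_\xi \}\textrm{.}$$
Since $\Gamma$ is a lattice, the orbit $\Gamma \cdot p$ is discrete in $X$ and $\Stab_\Gamma(p)$ is finite, so $S_\xi$ is finite. Set $\delta_\xi := \min \{d_X(p,cp) : c \in S_\xi \setminus \{1\}\} > 0$; if $a = 1$ then $b=1$ and $g = 1$ works, so I assume $a \neq 1$.

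If also $b \in S_\xi$, only finitely many conjugate pairs $(a,b)$ arise; I pick some $g_{a,b} \in G$ for each, and since $d_X(p,ap)+d_X(p,bp) \geq \delta_\xi$, any $L_\xi \geq \max d_X(p,g_{a,b}p)/\delta_\xi$ absorbs these finitely many cases. If instead $d_X(p,bp) \geq d_\xi$, the strategy is to first conjugate $a$ by a short element $h \in G$ to produce $a' := hah^{-1}$ with $d_X(p,a'p) \in [d_\xi, 2d_\xi]$, then apply Theorem~\ref{thmspecial:bounded conj in G} to the pair $(a',b)$. To construct $h$, I exploit the fact that the displacement function $q \mapsto d_X(q,aq)$ is convex on $X$ and grows as $q$ moves transverse to the min-set $Y_a$ of $a$, at a rate depending on $\tau(a)$ and the slope. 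Since $\tau(a) \leq d_X(p,ap) < d_\xi$ and $a$ ranges over the finite set $S_\xi$, I can find $q \in X$ with $d_X(q,aq) \in [d_\xi, 2d_\xi]$ and $d_X(p,q) \leq C_\xi$ for a uniform $C_\xi = C_\xi(\xi)$, and take any $h \in G$ with $h^{-1}p = q$; then $d_X(p,hp) = d_X(p,q) \leq C_\xi$ and $d_X(p,a'p) = d_X(q,aq) \in [d_\xi, 2d_\xi]$, as required.

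Applying Theorem~\ref{thmspecial:bounded conj in G} to $(a',b)$ yields $g' \in G$ with $g'a'(g')^{-1} = b$ and $d_X(p,g'p) \leq 2\ell_\xi(2d_\xi + d_X(p,bp))$. Setting $g := g'h$ gives $gag^{-1} = b$ and, by the triangle inequality, $d_X(p,gp) \leq d_X(p,g'p) + d_X(p,hp) \leq 4\ell_\xi d_\xi + 2\ell_\xi d_X(p,bp) + C_\xi$. Using $d_X(p,ap)+d_X(p,bp) \geq \delta_\xi + d_X(p,bp)$, this is bounded by $L_\xi(d_X(p,ap)+d_X(p,bp))$ for $L_\xi$ chosen large in terms of $\ell_\xi, d_\xi, \delta_\xi$ and $C_\xi$. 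The main obstacle is the quantitative growth estimate for the displacement function off $Y_a$ needed to bound $C_\xi$ uniformly across $a \in S_\xi$; this follows from the parallel-set and flat structure associated with real hyperbolic elements of fixed slope in the symmetric space $X$, as developed in the preliminaries.
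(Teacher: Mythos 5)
Your proof is correct, but it is more elaborate than what the paper intends, and the spot you flag as the ``main obstacle'' is not really an obstacle once one exploits the finiteness of $S_\xi$ properly.

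The paper's route goes through Proposition~\ref{prop:distance to flat bounded implies conjugator bounded} rather than Theorem~\ref{thmspecial:bounded conj in G} directly. That proposition only needs, for each of the two elements separately, a constant $\ell(\cdot)$ with $d_X(p,\MIN{\cdot}) \leq \ell(\cdot)\,d_X(p,\cdot\,p)$; it then hands back a conjugator of length $\ell(a)\,d_X(p,ap)+\ell(b)\,d_X(p,bp)$. For $c$ with $d_X(p,cp)\geq d_\xi$, Lemma~\ref{lemma:distance to MIN in symmetric space} supplies $\ell(c)=2\ell_\xi$. For the finitely many nontrivial $c\in S_\xi$ (discreteness of $\Gamma$ gives finiteness, and $c$ real hyperbolic gives $d_X(p,cp)>0$ and $\MIN{c}\neq\emptyset$), the ratio $d_X(p,\MIN{c})/d_X(p,cp)$ is simply some finite number, and one takes $L_\xi$ to be the maximum of $2\ell_\xi$ and these finitely many ratios. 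This treats all three of your cases at once, with no conjugating trick and no displacement-growth estimate. What your approach buys is that it stays entirely at the level of Theorem~\ref{thmspecial:bounded conj in G} as a black box, which is aesthetically nice; what it costs is the detour of constructing $h$ and $a'$, and a dependence on a geometric input that you then worry about.

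On that input: you do not need any quantitative rate of growth of the displacement function off $\MIN{a}$, nor uniformity in $a$ as a separate issue. For each fixed $a\in S_\xi\setminus\{1\}$, the displacement function is continuous, takes the value $d_X(p,ap)<d_\xi$ at $p$, and is unbounded along a ray perpendicular to $\MIN{a}$ --- this last fact needs only convexity: a convex function on $[0,\infty)$ that is non-decreasing (by the minimality of $\MIN{a}$) and bounded above would be constant, forcing the ray into $\MIN{a}$, a contradiction. The intermediate value theorem then produces your $q$, and finiteness of $S_\xi$ turns the resulting $C_a$'s into a uniform $C_\xi$ by taking a maximum. So the step is fine, but your framing (``parallel-set and flat structure'', ``rate depending on $\tau(a)$ and the slope'') makes it sound as though something delicate and quantitative is being invoked when it is not. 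Finally, a small typo: in the line ``Setting $g:=g'h$ gives $gag^{-1}=v$'' you of course mean $b$, not $v$.
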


The main tool in proving Theorem \ref{thmspecial:bounded conj in G} concerns an estimate of the distance from an arbitrary basepoint in $X$ to a flat (or union of flats):

\begin{varthm}[Lemma \ref{lemma:distance to MIN in symmetric space}] 
For each slope $\xi$ there exist positive constants $\ell_{\xi}$ and $d_{\xi}$ such that for each $a \in G$ which is real hyperbolic of slope $\xi$ and such that $d_{X}(p,ap)>d_{\xi}$ the following holds:
$$d_{X}(p, \MIN{a}) \leq 2\ell_{\xi}d_{X}(p,ap)\textrm{.}$$
\end{varthm}

\subsection*{Unipotent elements} In the second half of this paper we present a method for dealing with the conjugacy of certain unipotent elements. Crucial to our method is the root-space decomposition of the Lie algebra $\Lie{g}$ of $G$. This gives us a root system $\Lambda$ such that a maximal unipotent subgroup $N$ will have a Lie algbera $\Lie{n}$ of the form
		$$\Lie{n}=\sum_{\lambda \in \Lambdaplus} \Lie{g}_{\lambda}$$
where $\Lie{g}_\lambda$ is a root-space of $\Lie{g}$ and $\Lambdaplus$ is a subset of positive roots in $\Lambda$. In the following we assume that $\Lie{g}$ is a split Lie algebra, meaning that each root-space $\Lie{g}_\lambda$ has dimension $1$. The classical split Lie algebras are $\Lie{sl}_d(\R)$, $\Lie{sl}_{d,d+1}(\R)$, $\Lie{sp}_d(\R)$ and $\Lie{so}_{d,d}(\R)$. Because the exponential map restricted to $\Lie{n}$ is a diffeomorphism, each element $u$ of $N$ can be expressed uniquely as
		$$u = \exp \left( \sum_{\lambda \in \Lambdaplus} Y_\lambda\right) , \ \ Y_\lambda \in \Lie{g}_\lambda.$$
If $\Pi \subset \Lambdaplus$ is the set of simple roots, then we say that the \emph{simple entries} of $u$ are those $Y_\lambda$ for $\lambda \in \Pi$.

\begin{thmspecial}\label{thmspecial:unipotent clf}
Fix $\delta>0$. Let $u$ and $v$ be conjugate unipotent elements in $G$ such that each simple entry of $u$ and $v$ is of size at least $\delta$. Then there exists $g \in G$ such that $gug^{-1}=v$ and which satisfies:
	$$d_G(1,g) \leq L( d_G(1,u) + d_G(1,v))$$
where $L$ depends on $\delta$ and the root-system $\Lambda$ associated to $G$.
\end{thmspecial}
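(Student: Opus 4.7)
The strategy is to construct the conjugator as $g = n h$, with $h$ in the split Cartan $A = \exp(\Lie{a})$ and $n \in N$. The factor $h$ matches the simple entries of $u$ and $v$; the factor $n$ is built by a recursion along the height grading $\Lie{n} = \bigoplus_{k \geq 1} \Lie{n}_{(k)}$, where $\Lie{n}_{(k)} := \bigoplus_{\hgt(\lambda) = k} \Lie{g}_{\lambda}$. Since the simple entries are defined via the exponential coordinates on $N$, I may assume $u, v \in N$ throughout.

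For the first step, write $Y^u_{\alpha_i} = y_i E_{\alpha_i}$ and $Y^v_{\alpha_i} = z_i E_{\alpha_i}$. After a preliminary conjugation of $u$ by a bounded-length element of $G$ (for instance a suitable sign element in a finite subgroup of $N_G(A)$) I may assume the ratios $z_i/y_i$ are positive. Since the simple roots form a basis of $\Lie{a}^{\ast}$, there is a unique $H \in \Lie{a}$ with $\alpha_i(H) = \log(z_i/y_i)$, and $h := \exp(H)$ satisfies $\Ad(h) Y^u_{\alpha_i} = Y^v_{\alpha_i}$. The hypothesis $|y_i|, |z_i| \geq \delta$ together with the standard bound $\log|y_i|, \log|z_i| \leq C(\modulus{u}+\modulus{v})$ gives $\|H\| \leq C'(\modulus{u}+\modulus{v}+|\log \delta|)$, so $\modulus{h}$ and $\modulus{huh^{-1}}$ are both $O(\modulus{u}+\modulus{v})$. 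Replacing $u$ by $huh^{-1}$, I henceforth suppose $u$ and $v$ share the common simple part $F := \sum_i y_i E_{\alpha_i}$.

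For the inductive step, suppose after several stages the current representative $u_k = \exp(W)$ agrees with $v = \exp(Z)$ through all heights $\leq k$. I seek $X \in \Lie{n}_{(k)}$ such that $\exp(X) u_k \exp(-X)$ matches $v$ through height $k+1$. A direct Baker--Campbell--Hausdorff computation shows that $\Ad(\exp(X))$ preserves all components at heights $\leq k$, changes the height-$(k+1)$ component by exactly $[X, F]$, and contributes only at heights $\geq k+2$ in higher order. The step therefore amounts to solving
\[
\ad(F)(X) \;=\; W_{(k+1)} - Z_{(k+1)} \qquad \text{in } \Lie{n}_{(k+1)},
\]
with $X \in \Lie{n}_{(k)}$ and a quantitative bound on $\|X\|$.

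The main technical obstacle is to show that $\ad(F)\colon \Lie{n}_{(k)} \to \Lie{n}_{(k+1)}$ is surjective with a right-inverse whose operator norm is bounded in terms of $\delta$ and $\Lambda$ alone. Since every simple coefficient of $F$ is nonzero, $F$ is a regular (principal) nilpotent in $\Lie{g}$, and Kostant's theory of principal nilpotents in split semisimple Lie algebras supplies the surjectivity for every $k \geq 1$ (and also from $\Lie{a}$ onto $\Lie{n}_{(1)}$, which is what justifies the step that constructs $h$). A right-inverse on each graded piece is a rational function of $(y_1, \ldots, y_r)$ with poles only on $\{y_i = 0\}$, so its operator norm is uniformly bounded on $\{|y_i| \geq \delta\}$ by some $C(\delta, \Lambda)$. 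This yields $\|X_{(k)}\| \leq C(\delta, \Lambda)\,\|W_{(k+1)} - Z_{(k+1)}\|$, and since group distance in $N$ is logarithmic in the Lie-algebra norm, each factor $\exp(X_{(k)})$ contributes $O(\modulus{u}+\modulus{v}+\log(1/\delta))$ to $\modulus{n}$. Summing the at most $|\Lambdaplus|$ rounds gives $\modulus{n} = O(\modulus{u}+\modulus{v})$, and combined with the estimate on $\modulus{h}$ the conjugator $g = nh$ satisfies the desired bound with $L = L(\delta, \Lambda)$.
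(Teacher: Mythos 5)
Your strategy is genuinely different from the paper's. You first use a torus element $h$ to make the simple parts of $u$ and $v$ agree, and then climb height-by-height, solving a linear equation $\ad(F)(X)=W_{(k+1)}-Z_{(k+1)}$ in $\Lie{n}_{(k)}$ at each step. The paper instead first strips \emph{all} non-simple entries from each of $u$ and $v$ separately (one root at a time, in a carefully chosen total order on $\Lambdaplus$ established in Lemma~\ref{lemma:simple unipotent}), then conjugates the resulting ``purely simple'' elements to each other by a torus element (Proposition~\ref{prop:existence of diagonal conjugator}). Your invocation of Kostant's theory of principal nilpotents to get surjectivity of $\ad(F)\colon \Lie{n}_{(k)}\to\Lie{n}_{(k+1)}$ is correct, and it is philosophically the same phenomenon that the paper's ordering lemma makes constructive; the advantage of the paper's approach is that the one-root-at-a-time elimination never requires inverting more than a $1\times 1$ system, so the quantitative bound is immediate (the constant $c_0$ of Proposition~\ref{prop:Lie product size}).

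The gap in your argument is precisely the quantitative bound on a right inverse. You claim that because a right inverse of $\ad(F)|_{\Lie{n}_{(k)}}$ is a rational function of $(y_1,\dots,y_r)$ with poles only on $\bigcup_i\{y_i=0\}$, its operator norm is uniformly bounded on $\{|y_i|\ge\delta\}$. That inference is false: the region $\{|y_i|\ge\delta\}$ is unbounded, and a rational function can blow up at infinity even if its pole set is contained in $\bigcup_i\{y_i=0\}$ (for instance $y_1y_2/y_3$). So you have not shown $\|X_{(k)}\|\lesssim_{\delta,\Lambda}\|W_{(k+1)}-Z_{(k+1)}\|$, which is the step that makes the eventual bound linear. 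The statement may still be true (one can hope to exploit that $\ad(F)$ is homogeneous of degree $1$ in $y$, so a suitably chosen right inverse is homogeneous of degree $-1$; or one can instead normalize $F$ to $\sum_i E_{\alpha_i}$ by a second torus conjugation and absorb the distortion of the higher entries logarithmically), but as written the argument asserts rather than proves the crucial estimate. This is exactly what the paper's Lemma~\ref{lemma:simple unipotent} is designed to circumvent: it produces an ordering that triangularizes the problem so that each elimination step costs at most $\|Y_{\lambda_0}\|/(c_0\delta)$ with no matrix inversion required. Separately, your preliminary conjugation by a ``sign element in a finite subgroup of $N_G(A)$'' to make the ratios $z_i/y_i$ positive is stated rather loosely; the paper's Proposition~\ref{prop:existence of diagonal conjugator} instead deduces positivity of the ratios from the conjugacy hypothesis directly via the $\exp(U)$ factor with $U\in\Lie{k}\cap Z_\Lie{g}(\Lie{a})$, which is cleaner and worth importing.
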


As with the real hyperbolic case we are able to deduce a corollary for lattices:

\begin{thmspecial}\label{thmspecial:unipotent in lattice}
Let $\Gamma$ be a lattice in $G$. Then there exists a constant $L>0$ such that two unipotent elements $u$ and $v$ in $\Gamma$ with non-zero simple entries are conjugate in $G$ if and only if there exists a conjugator $g \in G$ such that
		$$d_G(1,g) \leq L \big( d_G(1,u)+d_G(1,v)\big).$$
\end{thmspecial}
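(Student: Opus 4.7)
The plan is to derive Theorem \ref{thmspecial:unipotent in lattice} from Theorem \ref{thmspecial:unipotent clf} by exhibiting a positive constant $\delta = \delta(\Gamma)$ below which no non-zero simple entry of a unipotent element of $\Gamma$ can lie. Once such a $\delta$ is in hand, an application of Theorem \ref{thmspecial:unipotent clf} to the pair $u,v \in \Gamma$ yields a conjugator $g \in G$ with $d_G(1,g) \leq L(\delta)\bigl(d_G(1,u)+d_G(1,v)\bigr)$, and setting $L := L(\delta)$ completes the non-trivial direction. The reverse implication is tautological, since any witness $g$ to the inequality is in particular a $G$-conjugator.

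For the lower bound $\delta$, first observe that if $\Gamma$ is cocompact then $\Gamma$ contains no non-trivial unipotents and the statement is vacuous, so assume $\Gamma$ is non-uniform. Fix a maximal unipotent subgroup $N \leq G$ with Lie algebra $\Lie{n} = \bigoplus_{\lambda \in \Lambdaplus}\Lie{g}_\lambda$. Classical reduction theory for lattices in semisimple Lie groups guarantees that $\Gamma \cap N$ is a lattice in the simply-connected nilpotent Lie group $N$. On the other hand, the simple entries of any $u \in N$ coincide with the coordinates of its image under the abelianisation map $N \to N^{ab} = N/[N,N]$, whose Lie algebra is canonically identified with $\bigoplus_{\lambda \in \Pi}\Lie{g}_\lambda$. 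Since the image of a lattice in a simply-connected nilpotent group under its abelianisation is again a lattice, the simple entries coming from $\Gamma \cap N$ form a discrete subgroup of the real vector space $\bigoplus_{\lambda \in \Pi}\Lie{g}_\lambda$, so every non-zero such simple entry has norm bounded below by some $\delta_N > 0$. To cover unipotent elements of $\Gamma$ not lying in the distinguished $N$, one uses that every unipotent element of a non-uniform lattice is $\Gamma$-conjugate into $\Gamma \cap N'$ for the unipotent radical $N'$ of some minimal parabolic, and that there are only finitely many $\Gamma$-conjugacy classes of such $N'$; taking $\delta$ to be the minimum of the finitely many $\delta_{N'}$ yields the required uniform lower bound.

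The main obstacle in this approach is the classical reduction-theoretic input: namely, that $\Gamma \cap N$ is a lattice in $N$ and that the $\Gamma$-conjugacy classes of unipotent radicals of minimal parabolics meeting $\Gamma$ non-trivially are finite in number. These are standard but non-trivial facts about non-uniform lattices in semisimple Lie groups. Granting them, the lower bound $\delta$ is essentially forced by discreteness, and Theorem \ref{thmspecial:unipotent clf} then furnishes the bounded conjugator in $G$ with the desired linear control.
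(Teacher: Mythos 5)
Your top-level plan matches the paper exactly: derive the lattice statement from Theorem~\ref{thmspecial:unipotent clf} (equivalently Theorem~\ref{thm:unipotent clf 2}) by exhibiting a uniform $\delta>0$ bounding the simple entries of unipotent elements of $\Gamma$ away from zero. The paper packages this as Lemma~\ref{lemma:unipotent simple entries lower bound}. Where you differ is in how the lower bound is obtained, and there your argument has a genuine gap.

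You pass to the abelianisation $N \to N^{ab} \cong \bigoplus_{\lambda\in\Pi}\Lie{g}_\lambda$, note that the image of the lattice $\Gamma\cap N$ is again a lattice $\Lambda$, and then conclude that ``every non-zero such simple entry has norm bounded below.'' But discreteness of $\Lambda$ in $\bigoplus_{\lambda\in\Pi}\Lie{g}_\lambda$ only controls the norm of the full vector $(Y_\lambda)_{\lambda\in\Pi}$; it does not control the norm of an individual coordinate $Y_\lambda$. And what you actually need, in order to apply Theorem~\ref{thmspecial:unipotent clf}, is that $\Delta(u) = \min_{\lambda\in\Pi}\norm{Y_\lambda} \geq \delta$ whenever all simple entries are non-zero --- a bound on the smallest coordinate, not on the Euclidean length of the vector. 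These are inequivalent: the lattice $\Lambda = \Z(1,0)+\Z(\sqrt{2},1) \subset \R^2$ is discrete, yet its points $(m+n\sqrt{2},n)$ with $n\neq 0$ have both coordinates non-zero and first coordinate arbitrarily close to $0$. So discreteness of the image in $N^{ab}$ alone does not deliver the bound you claim. To make your route work you would further need to show that the projection of $\Lambda$ onto each individual simple root-space $\Lie{g}_\lambda$ is discrete, which requires additional input (e.g.\ rationality of the root-space decomposition with respect to the Mal'cev $\Q$-structure on $\Lie{n}$ coming from $\Gamma\cap N$).

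For comparison, the paper's Lemma~\ref{lemma:unipotent simple entries lower bound} argues per root-space from the start: it takes a finite generating set $\gamma_1,\ldots,\gamma_r$ of $\Gamma\cap N$, writes each in exponential coordinates, and uses the Campbell--Baker--Hausdorff formula to observe that the $\lambda_i$-entry of an arbitrary $\gamma\in\Gamma\cap N$ lies in the integer linear span of $\{Y_{\lambda_i}^{(1)},\ldots,Y_{\lambda_i}^{(r)}\}\subset\Lie{g}_{\lambda_i}$; it then takes the shortest non-zero element over all $i$ as $\delta$. That is an argument about each one-dimensional root-space separately, so the conclusion it reaches is exactly the coordinate-wise bound the theorem requires, whereas passing through the full abelianisation lattice loses the coordinate information. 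Your extra step about covering unipotent elements not lying in the fixed $N$ via finitely many $\Gamma$-conjugacy classes of unipotent radicals is not taken in the paper (which instead uses the $K$-conjugacy of maximal unipotent subgroups built into the definition of $\Delta$ and Theorem~\ref{thm:unipotent clf 2}), but that part is harmless; the real issue is the coordinate-wise versus vector-norm discrepancy above.
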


From here, there are several natural directions one could look. Firstly one could try to remove the restriction that the Lie algebra should be split. Secondly, it seems that the techniques to prove Theorem \ref{thmspecial:unipotent clf} could be extended to a larger family of unipotent elements in $G$, allowing the simple entries to be zero. In particular this would potentially allow the result for lattices to include all unipotent elements. However the computational aspect involved in doing this increased many times over from the ``simple case'' considered in this paper.

The next step should be to consider elliptic elements and then the idea would be to use the (complete) Jordan decomposition, which expresses each element of the group as a product of commuting elliptic, real hyperbolic and unipotent elements.

\begin{question}
Given the Jordan decompositions of two conjugate elements $u$ and $v$ of $G$, what can we infer about the length of a short conjugator between $u$ and $v$ from the lengths of short conjugators between each of the Jordan components?
\end{question}

The centralisers of each component will play an important role in answering this question.

The nature of Theorems \ref{thmspecial:bounded conj in G from lattice} and \ref{thmspecial:unipotent in lattice} raise further questions for lattices. In particular we would like the conjugating element to come from the lattice, rather than the Lie group as we have here.

\begin{question}
Consider two elements $u$ and $v$ in $\Gamma$ which are conjugate in the lattice. Suppose we have a conjugator $g$ for $u$ and $v$ such that $g$ lies in the ambient Lie group. How close to $g$ is a conjugating element from $\Gamma$?
\end{question}

We will now outline the structure of this paper. In Section \ref{sec:preliminaries} we discuss the relevant background information related in particular to the structure of symmetric spaces. Sections \ref{sec:semisimple:centraliser}-\ref{sec:semisimple:bounded conj in Gamma} deal with real hyperbolic elements, while Sections \ref{sec:unipotent}-\ref{sec:unipotent:constructing a conjugator} tackle the problem for unipotent elements. 

In Section \ref{sec:semisimple:centraliser} we describe the geometry of the centralisers of real hyperbolic elements of $G$. Theorems \ref{thmspecial:bounded conj in G} and \ref{thmspecial:bounded conj in G from lattice} are the main objectives of Section \ref{sec:semisimple:bounding in G}.
In Section \ref{sec:semisimple:no uniform bound} we consider the question of whether there is a uniform linear bound for the length of short conjugators between all real hyperbolic elements in $G$. We answer the question in the negative. Section \ref{sec:semisimple:bounded conj in Gamma} discusses the issue of translating the conjugator found in Theorem \ref{thmspecial:bounded conj in G from lattice}, which lies in the ambient Lie group $G$, into the lattice. In specific cases we show this can be done without losing the linear control on conjugator length. However the question of what happens in general, even for real hyperbolic elements, remains open.

Section \ref{sec:unipotent} introduces the problem for unipotent elements, and in particular includes a brief overview of what happens when we apply our method to upper-triangular unipotent matrices. In Section \ref{sec:unipotent:root system} we show how the root system of $G$ plays a vital role in the conjugacy of unipotent elements. Finally, construction of the short conjugator is done in Section \ref{sec:unipotent:constructing a conjugator}.

\vspace{3mm}
\noindent {\sc Acknowledgements:} The author would like to thank Cornelia Dru\c{t}u for many helpful conversations on the material of this paper. The input of Romain Tessera and Martin Bridson is also much appreciated.

\section{Preliminaries}\label{sec:preliminaries}

Let $G$ be a semisimple real Lie group and let $X$ be its associated symmetric space. For background on the structure of symmetric spaces we refer the reader to \cite{Helg01} and \cite{Eber96}. The author's thesis \cite{SaleThesis} contains the contents of this paper, and as such the preliminaries described there should also provide the required background material.

\subsection{Symmetric spaces and their isometries}\label{sec:sym spaces and isometries}

Given an isometry $g$ of a symmetric space $X$ (or any $\CAT{0}$--space) we can consider the following set
		$$\MIN{g}=\left\{x \in X \mid d_X(x,gx)=\inf_{q \in X}d_X(q,gq)\right\}.$$
If $\MIN{g}$ is non-empty then we say that $g$ is a \emph{semisimple} isometry of $X$, otherwise it is called parabolic. The semisimple isometries which fix some point in $X$ are called \emph{elliptic}. Various names have been given to those which don't fix a point, including hyperbolic, axial and loxodromic. To minimise confusion, we will simply call these elements the \emph{non-elliptic semisimple} elements of $G$.

It is not hard to see that the non-elliptic semisimple isometries of $X$ will translate some geodesic (this is shown, for example, in \cite{BGS85}). Given a bi-infinite geodesic $c:\R \to X$ consider $P(c)$, the subspace of $X$ consisting of all geodesics that are parallel to $c$. We call an isometry $g$ of $X$ \emph{real hyperbolic} if $g$ translates some geodesic $c$ in $X$ and furthermore $\MIN{g}=P(c)$. This says precisely that any geodesic parallel to $c$ will also be translated by $g$. The nomenclature used for these elements is justified in \cite[\S 3.1.3 \& Lemma 3.2.3]{SaleThesis}.

Let $\Lie{g=k \oplus p}$ be a Cartan decomposition of the Lie algebra $\Lie{g}$ of $G$.
There exists a point $p \in X$ such that the subalgebra $\Lie{k}$ is the Lie algebra of the maximal compact subgroup $K=G_p=\{g \in g \mid gp=p\}$. Meanwhile $\Lie{p}$ can be identified with $T_pX$, the tangent space at $p$ of $X$.
A geodesic $c : \R \to X$ such that $c(0)=p$ determines a vector in $T_pX$ and hence an element $H \in \Lie{p}$. Consider the maximal abelian subspace $\Lie{a}$ of $\Lie{p}$ which contains $H$. The submanifold $\exp(\Lie{a})p$ is a maximal flat in $X$. This follows from the following two facts:
\begin{itemize}
\item \cite[Ch. IV Thm 7.2]{Helg01} for $\Lie{s} \subset \Lie{p}$, the submanifold $\exp(\Lie{s})p$ in $X$ is totally geodesic in $X$ if and only if $\Lie{s}$ is a Lie triple system;
\item \cite[Ch. IV Thm 4.2]{Helg01} for $Y_1,Y_2,Y_3 \in \Lie{p}$, the curvature tensor at $p$ is given by $R_p(Y_1,Y_2)Y_3=-[[Y_1,Y_2],Y_3]$.
\end{itemize}

\begin{lemma}\label{lemma:flats and abelian subspaces}
Every maximal flat $F$ containing $p$ is of the form $F=\exp(\Lie{a})p$ for some maximal abelian subspace $\Lie{a}$ of $\Lie{p}$.
\end{lemma}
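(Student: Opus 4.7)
The plan is to translate the geometric conditions ``totally geodesic'' and ``flat'' on $F$ into algebraic conditions on the corresponding subspace of $\Lie{p}$, and then promote ``abelian'' to ``maximal abelian'' using the maximality of $F$. Concretely, since $G$ acts transitively by isometries, the exponential map $Y \mapsto \exp(Y)p$ identifies a totally geodesic submanifold of $X$ through $p$ with its tangent space at $p$, sitting inside $T_pX \cong \Lie{p}$. Let $V \subset \Lie{p}$ be the subspace corresponding to $F$. By the first cited fact ($\exp(\Lie{s})p$ is totally geodesic iff $\Lie{s}$ is a Lie triple system), $V$ is a Lie triple system and $F = \exp(V)p$.

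The main step is to show that $V$ is abelian; this is the only nontrivial part. Since $F$ is flat, the curvature tensor vanishes on $V$, so by the second cited fact we have $[[Y_1,Y_2],Y_3] = 0$ for all $Y_1,Y_2,Y_3 \in V$. Fix $Y_1,Y_2 \in V$ and set $Z := [Y_1,Y_2]$. Because $[\Lie{p},\Lie{p}] \subset \Lie{k}$, we have $Z \in \Lie{k}$. The point is to show $Z=0$. Using invariance of the Killing form $B$ of $\Lie{g}$,
\[
B(Z,Z) = B([Y_1,Y_2],Z) = B(Y_1,[Y_2,Z]) = -B(Y_1,[Z,Y_2]) = 0,
\]
since $[Z,Y_2] = [[Y_1,Y_2],Y_2] = 0$. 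As $G$ is a semisimple Lie group with no compact factors, $B$ is negative definite on $\Lie{k}$ (a standard property of the Cartan decomposition), so $Z = 0$. Hence $[V,V] = 0$, i.e.\ $V$ is an abelian subspace of $\Lie{p}$.

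Finally, to upgrade abelian to maximal abelian, I would argue by contradiction. Suppose $V$ were properly contained in some abelian subspace $\Lie{a}' \subset \Lie{p}$. Any abelian subspace is automatically a Lie triple system with vanishing curvature on it, so by the two cited facts $\exp(\Lie{a}')p$ is a totally geodesic flat submanifold of $X$. Moreover, $\exp$ restricted to an abelian subspace of $\Lie{p}$ is a diffeomorphism onto its image (a standard consequence of $X$ being of noncompact type), so $\dim \exp(\Lie{a}')p = \dim \Lie{a}' > \dim V = \dim F$. This flat strictly contains $F$, contradicting the maximality of $F$. Therefore $V$ is a maximal abelian subspace $\Lie{a}$ of $\Lie{p}$, and $F = \exp(\Lie{a})p$ as required.

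The single delicate point is the Killing form computation showing $V$ is abelian; everything else is the bookkeeping between totally geodesic submanifolds through $p$ and their tangent subspaces in $\Lie{p}$, together with a maximality comparison.
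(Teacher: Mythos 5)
Your proof is correct, and it is a self-contained argument where the paper simply cites \cite[Ch.~V Prop.~6.1]{Helg01} without giving any details. Your route is to combine the two facts quoted just above the lemma (totally geodesic $\Leftrightarrow$ Lie triple system, and the curvature formula $R_p(Y_1,Y_2)Y_3 = -[[Y_1,Y_2],Y_3]$) to extract the tangent subspace $V \subset \Lie{p}$ of $F$, show it is abelian via the Killing form computation, and then use maximality of $F$ to force $V$ to be maximal abelian. This is essentially what Helgason does behind the citation, so the mathematical content is the same; the only thing the paper saves by citing is exactly the work you did. Two small remarks on your write-up: the implicit use of Helgason's Ch.~IV Thm.~7.2 in the converse direction (every totally geodesic submanifold through $p$ equals $\exp(T_pF)p$, not just the forward direction the paper's bullet point emphasizes) is correct but worth stating explicitly, since you need it to produce $V$ at all; and the negative-definiteness of $B$ on $\Lie{k}$ holds for any Cartan decomposition of a real semisimple Lie algebra, so the ``no compact factors'' hypothesis isn't actually needed for that step, though it costs nothing to invoke it.
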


\begin{proof}
This follows from \cite[Ch. V Prop 6.1]{Helg01}.
\end{proof}

\subsubsection{Weyl chambers}

In order to define the Weyl chambers of a flat in $X$ we first need to describe the \emph{root-space decomposition} of the Lie algebra of $G$. Fix a Cartan decomposition $\Lie{g = k \oplus p}$. Let $\Lie{a}$ be a maximal abelian subspace of $\Lie{p}$. Then, since the operators $\ad(H)$, for $h \in \Lie{a}$, are simultaneously diagonalisable we can consider, for linear functionals $\lambda : \Lie{a} \to \R$, the eigenspaces
		$$\Lie{g}_\lambda = \{ Y \in \Lie{g} \mid \ad(H)Y = \lambda(H)Y \ \mathrm{ for \ all } \ H \in \Lie{a} \}.$$
Those $\lambda$ for which $\Lie{g}_\lambda$ is non-empty are called \emph{roots} of $\Lie{g}$ with respect to $\Lie{a}$ and the spaces $\Lie{g}_\lambda$ the corresponding \emph{root-spaces}. Let $\Lambda$ be the set of all non-zero roots of $\Lie{g}$ with respect to $\Lie{a}$. The \emph{root-space decomposition} of $\Lie{g}$ is the following:
		$$\Lie{g=g}_0+\sum\limits_{\lambda \in \Lambda} \Lie{g}_\lambda.$$
The roots $\Lambda$ form a root system in the dual space $\Lie{a}^*$.  A subset $\Pi$ of $\Lambda$ is called a \emph{base} if it is a basis for $\Lie{a}^*$ and if any root $\lambda$ can be written as
		$$\lambda = \sum\limits_{\alpha \in \Pi} c_\alpha \alpha$$
in such a way that either each $c_\alpha$ is non-negative or each $c_\alpha$ is non-positive. The elements of $\Pi$ are called \emph{simple roots} and those elements for which $c_\alpha \geq 0$ for each $\alpha \in \Pi$ are called \emph{positive roots} with respect to $\Pi$. We denote the set of positive roots by $\Lambdaplus$.

Consider a flat $F$ in $X$. By Lemma \ref{lemma:flats and abelian subspaces} there exists a maximal abelian subspace $\Lie{a}$ of $\Lie{p}$ such that $F=\exp(\Lie{a})p$. Let $\Lambda$ be the corresponding set of roots, $\Pi$ a set of simple roots and $\Lambdaplus$ the corresponding positive roots. The set of elements $H \in \Lie{a}$ for which $\lambda(H) > 0$ for each $\lambda \in \Pi$ forms an open \emph{Weyl chamber} in $\Lie{a}$, denoted $\Lie{a}^+$. The corresponding set $\mathcal{C}_\Pi=\exp(\Lie{a}^+)p$ is called an open Weyl chamber in $X$. The choice of $\Pi$ determines the Weyl chamber $\Lie{a}^+$.

For each root $\lambda \in \Lambda$ the kernel is a hyperplane in $\Lie{a}$. These are called the \emph{singular hyperplanes} of $\Lie{a}$. The \emph{walls} of $\mathcal{C}_\Pi$ are contained in the singular hyperplanes and are defined, for a subset $\Theta \subset \Pi$, as
		$$\mathcal{C}_\Theta = \{ \exp (H) \in \overline{\mathcal{C}_\Pi} \mid \lambda(H)=0\ \mathrm{ for } \ H \in \Pi \setminus \Theta \}$$
where $\overline{\mathcal{C}_\Pi}$ is the closure of $\mathcal{C}_\Pi$ in $F$. The flat $F$ is partitioned into Weyl chambers and walls. In fact, after removing all the singular hyperplanes from $F$, the connected components of what remains are all the Weyl chambers corresponding to the different choices for $\Pi$.

Let $c:\R \to X$ be a geodesic in $F$ with $c(0)=p$. Then there exists $H \in \Lie{a}$ such that $c(t)=\exp(tH)$ for every $t \in \R$. If $H$ is contained in a singular hyperplane of $\Lie{a}$ then we say the geodesic $c$ is \emph{singular}. Otherwise $H$ is contained in some Weyl chamber $\mathcal{C}_\Pi$ and we call $c$ a \emph{regular geodesic} in $X$. Since every geodesic in $X$ is contained in some maximal flat this definition extends to all geodesics. The following is an equivalent definition of regular and singular geodesics:

\begin{prop}
A geodesic is regular if and only if it is contained in a unique maximal flat.
\end{prop}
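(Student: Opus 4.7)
The plan is to reduce to a geodesic through the basepoint $p$, translate the statement into a question about maximal abelian subspaces of $\Lie{p}$ via Lemma \ref{lemma:flats and abelian subspaces}, and then distinguish the regular and singular cases using the centralizer of the tangent vector.

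First I would use the fact that $G$ acts transitively on $X$ by isometries to reduce to the case $c(0)=p$, so that $c(t)=\exp(tH)p$ for some $H \in \Lie{p}$. Regularity or singularity of $c$ is defined via the tangent vector $H$ lying in the open Weyl chamber $\Lie{a}^+$ or on a singular hyperplane of some maximal abelian $\Lie{a} \subset \Lie{p}$ containing $H$. By Lemma \ref{lemma:flats and abelian subspaces}, the maximal flats through $p$ are exactly the submanifolds $\exp(\Lie{a}')p$ for maximal abelian subspaces $\Lie{a}' \subset \Lie{p}$. Such a flat contains $c$ if and only if $H \in \Lie{a}'$. So the proposition reduces to: $H$ is regular if and only if there is a unique maximal abelian subspace $\Lie{a}'$ of $\Lie{p}$ containing $H$.

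For the forward direction, suppose $H$ is regular, i.e.\ $\lambda(H) \neq 0$ for every root $\lambda \in \Lambda$. From the root-space decomposition, the centraliser of $H$ in $\Lie{g}$ is $\Lie{g}_0$, and since $\Lie{a}$ is maximal abelian in $\Lie{p}$, the intersection $\Lie{g}_0 \cap \Lie{p}$ is exactly $\Lie{a}$. Any maximal abelian subspace $\Lie{a}' \subset \Lie{p}$ containing $H$ must consist of elements commuting with $H$, hence $\Lie{a}' \subseteq \Lie{a}$, and by maximality $\Lie{a}'=\Lie{a}$. This gives uniqueness.

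For the converse I would argue by contrapositive: if $H$ is singular, so $\lambda(H)=0$ for some root $\lambda$, then I exhibit a second maximal abelian containing $H$. Pick a nonzero $Y_\lambda \in \Lie{g}_\lambda$ and set $Y := Y_\lambda - \theta(Y_\lambda) \in \Lie{p}$, where $\theta$ is the Cartan involution with $\theta(\Lie{g}_\lambda)=\Lie{g}_{-\lambda}$. A short calculation gives $[H,Y]=\lambda(H)(Y_\lambda+\theta(Y_\lambda))=0$, and $Y \notin \Lie{a}$ because $Y_\lambda$ lies in a root-space with $\lambda \neq 0$. The subspace $\R H + \R Y$ is abelian, and extending it to a maximal abelian subspace $\Lie{a}' \subset \Lie{p}$ yields an $\Lie{a}' \neq \Lie{a}$ still containing $H$. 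Then $\exp(\Lie{a}')p$ is a second maximal flat containing $c$. The main obstacle in the write-up is the singular case: one has to be careful that $Y$ genuinely lies outside $\Lie{a}$ (using that root-spaces are orthogonal to $\Lie{a}$ under the Killing form, or equivalently that the $\Lie{p}$-part of $\Lie{g}_0$ is exactly $\Lie{a}$), so that the resulting maximal abelian $\Lie{a}'$ is really different from $\Lie{a}$.
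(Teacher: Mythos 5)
Your proof is correct. Note that the paper itself supplies no argument for this proposition---its ``proof'' consists solely of the citation to \cite[\S 2.11]{Eber96}---so you have given a self-contained proof where the paper delegates to a reference. The route you take is the standard Lie-theoretic one, and it is essentially the argument one finds in Eberlein: after translating to the basepoint $p$ and invoking Lemma \ref{lemma:flats and abelian subspaces}, the proposition becomes the statement that $H\in\Lie{p}$ is regular if and only if it lies in a unique maximal abelian subspace $\Lie{a}'\subset\Lie{p}$. The forward direction via $Z_{\Lie{g}}(H)\cap\Lie{p}=\Lie{g}_0\cap\Lie{p}=\Lie{a}$ is clean, and your construction of $Y=Y_\lambda-\theta(Y_\lambda)\in\Lie{p}$ in the singular case is exactly right: $[H,Y]=\lambda(H)\bigl(Y_\lambda+\theta(Y_\lambda)\bigr)=0$, and since $Y$ has nonzero components in $\Lie{g}_\lambda$ and $\Lie{g}_{-\lambda}$ (two root spaces disjoint from $\Lie{g}_0\supset\Lie{a}$) it is nonzero and lies outside $\Lie{a}$, so any maximal abelian $\Lie{a}'\supseteq\R H+\R Y$ gives a genuinely distinct flat $\exp(\Lie{a}')p$ through the geodesic. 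You correctly flagged that the only delicate point is verifying $Y\notin\Lie{a}$, and your suggested justification (the $\Lie{p}$-part of $\Lie{g}_0$ being exactly $\Lie{a}$) closes it.
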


\begin{proof}
See \cite[\S 2.11]{Eber96}.
\end{proof}

\subsubsection{Slopes of geodesics}\label{sec:lattice background:Ideal Boundary}

Given two geodesic rays $\rho_1,\rho_2$ in $X$, we say they are \emph{asymptotic} if they are at finite Hausdorff distance from one-another. This defines an equivalence relation on geodesic rays in $X$, the equivalence classes of which form the \emph{ideal boundary} $\bdry{X}$ of $X$. The action of an isometry $g \in G$ on $X$ can be extended to an action on $\bdry{X}$ since $\rho_1$ and $\rho_2$ are asymptotic if and only if $g\rho_1$ and $g\rho_2$ are asymptotic. Hence we may consider the quotient of the action of $G$ on $\bdry{X}$. We denote this quotient by $\Dmod$. The $\Dmod$--\emph{direction}, or \emph{slope}, of a ray $\rho$ is the image of $\rho$ under the quotient maps.

Consider a bi-infinite geodesic $c:\R \to X$. This determines two boundary points, one for each end of the geodesic. Although, by the definition of a symmetric space, there is an isometry $\varphi=s_{c(0)}$ of $X$, the geodesic involution at $c(0)$, such that $\varphi c(t)=c(-t)$ for all $t\geq 0$, this isometry will not be in the connected component of the group of isometries of $X$, and thus not in $G$. Thus the two ends of $c$ determine two ideal points corresponding to $c(\infty)$ and $c(-\infty)$, and these will usually give rise to distinct slopes. We call the \emph{slope of the bi-infinite geodesic} $c:\R \to X$ the projection of $\sigma (\infty )$ onto $\Dmod$.

If the geodesic $c$ is regular, then we say the corresponding $\Dmod$--directions are regular, while if $c$ is singular its slopes are said to be singular too. Equivalently, the regular $\Dmod$--slopes are the ones contained in the interior of $\Dmod$, while the singular slopes are those in the boundary of $\Dmod$.

\subsubsection{Families of parallel geodesics}

We defined in Section \ref{sec:lattice background:Ideal Boundary} the slope of a geodesic. Given a geodesic $c:\R \to X$, its slope $\xi$ belongs to the set $\Dmod$, which is the closure of a model chamber in the boundary of $X$. If the slope of $c$ is contained in the interior of $\Dmod$ then it is regular and therefore contained in a unique maximal flat $F$. In fact, by the Flat Strip Theorem \cite[Pg.\ 182]{BH99}, any geodesic parallel to $c$ will also be contained in $F$. Hence $F$ is equal to the subspace of $X$ containing all geodesics parallel to $c$.

If $c$ is a singular geodesic in $X$ then it will be contained in a whole family of maximal flats. Again, using the Flat Strip Theorem, any geodesic parallel to $c$ must be contained in one of these flats.

More formally, let $P(c)$ denote the subspace of $X$ consisting of all geodesics that are parallel to $c$. So when $c$ is regular, $P(c)$ is equal to the unique maximal flat $F$ described above. While if $c$ is singular $P(c)$ will be the union of (infinitely many) maximal flats. The structure of these sets is discussed in more detail in \cite[\S 2.20]{Eber96}.

\begin{lemma}\label{lem:transitive actions on P sigma}
$G$ acts transitively on the set of subspaces of the form $P(\sigma)$, where $\sigma$ varies over geodesics with the same slope.
\end{lemma}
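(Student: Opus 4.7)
The plan is to reduce the statement to the problem of matching two unit tangent vectors at the basepoint $p$ via the isotropy group $K = G_p$. Since $G$ acts by isometries on $X$ and the action extends to $\bdry X$, the parallel sets are $G$-equivariant: for every $g \in G$ and every geodesic $\sigma$ we have $gP(\sigma) = P(g\sigma)$. Thus, given two geodesics $\sigma_1$ and $\sigma_2$ of the same slope, I would first use transitivity of $G$ on $X$ to translate each so that $\sigma_i(0) = p$, reducing the problem to this case.

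Write $\sigma_i(t) = \exp(tH_i)p$ with $H_i \in \Lie{p}$ a unit vector, using the identification $T_pX \cong \Lie{p}$ coming from the Cartan decomposition $\Lie{g} = \Lie{k} \oplus \Lie{p}$. The map $H \mapsto \sigma_H(\infty)$ is a $K$-equivariant bijection between the unit sphere of $\Lie{p}$ and $\bdry X$, and the $K$-orbits on this sphere are parameterised by the intersection of the closed positive Weyl chamber $\overline{\Lie{a}^+}$ with that sphere. Since $G = KP$ for any parabolic subgroup stabilising a boundary point, the $K$- and $G$-orbits on $\bdry X$ coincide, so the slope of $\sigma_i$ is precisely the $K$-orbit of $H_i$. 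Having the same slope therefore produces some $k \in K$ with $\Ad(k)H_1 = H_2$.

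Using $kp = p$ together with the standard identity $k\exp(Y) = \exp(\Ad(k)Y)k$, I get $k\sigma_1(t) = \exp(t\Ad(k)H_1)p = \sigma_2(t)$, so $k$ carries $\sigma_1$ onto $\sigma_2$ as an oriented geodesic, and in particular $kP(\sigma_1) = P(\sigma_2)$. Composing $k$ with the two translations used to bring the geodesics through $p$ yields the element of $G$ witnessing transitivity. A free consistency check is that $\Ad(k)(-H_1) = -H_2$, so the $-\infty$ endpoints also match, which is needed for the parallel sets (defined as subsets of $X$, independent of orientation) to correspond.

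The step requiring the most care is the orbit identification in the second paragraph: one must know that $\Dmod = \bdry X / G$ really is realised as the quotient of the unit sphere of $\Lie{p}$ by $K$ under the exponential map, so that the verbal condition "same slope" is genuinely equivalent to the algebraic condition "same $K$-orbit of direction vector". This is a standard consequence of the Cartan and Iwasawa decompositions (and is the reason $\Dmod$ is described in the preliminaries as a model chamber). Once this identification is in hand, everything else reduces to the Lie-algebraic computation above.
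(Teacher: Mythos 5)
Your proof is correct, and the conclusion it reaches — that a single element of $K = G_p$ carries $\sigma_1$ onto $\sigma_2$ — is the same pivot point the paper arrives at; but the routes there are genuinely different. The paper first applies the transitivity of $G$ on the set of \emph{maximal flats} (citing Helgason's Ch.\ V Thm 6.4), moving $F_1$ onto $F_2$ and $\sigma(0)$ onto $\tau(0)$, and then adjusts by an element of the Weyl group of $F_2$ (viewed as a coset of the pointwise stabiliser, hence represented in $K$) to bring the two rays into the same Weyl chamber, where equality of slopes forces the rays to coincide. You instead linearize at the basepoint: identify $\bdry X$ with the unit sphere in $\mathfrak{p}$ via $H \mapsto \sigma_H(\infty)$, observe that the generalized Iwasawa decomposition $G = KG_\xi$ collapses $G$-orbits on $\bdry X$ to $K$-orbits, and then read off that ``same slope'' is exactly ``same $\Ad(K)$-orbit of direction vector.'' Both arguments ultimately rest on $K$ acting transitively on unit vectors of a fixed type, but you bypass any explicit use of maximal flats or the Weyl group, replacing them with the Cartan-decomposition parameterization $\Dmod \cong (\overline{\mathfrak{a}^+} \cap S^1)$. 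Your route is tidier once those standard identifications are granted; the paper's is slightly more self-contained geometrically, leaning only on transitivity on flats and on chambers. One small remark: once you have $k\sigma_1 = \sigma_2$ as parameterized geodesics, the matching of the $-\infty$ endpoints is automatic, so the ``free consistency check'' at the end is harmless but redundant.
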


\begin{proof}
Let $\sigma,\tau:\R \to X$ be non-parallel geodesics of the same slope and let $F_1$, $F_2$ be any pair of maximal flats containing $\sigma$, $\tau$ respectively. The transitivity of the action of $G$ on the set of maximal flats in $X$ is well known and follows from \cite[Ch. V Thm 6.4]{Helg01}. Further more we know there exists $g \in G$ such that $gF_1=F_2$ and $g\sigma(0)=\tau(0)$. We now have two geodesics, $g\sigma$ and $\tau$, which are contained in the same maximal flat and have the same slope. If the positive rays, that is $g\sigma[0,\infty)$ and $\tau[0,\infty)$, are in the same Weyl chamber, then having the same slope implies the geodesics must coincide, hence $g\sigma=\tau$. If they are not in the same Weyl chamber then we apply an element of the Weyl group of $F_2$, which acts transitively on the Weyl chambers, so that they end up in the same Weyl chamber. An element of the Weyl group is a coset of the point-wise stabiliser of $F_2$. So by choosing a representative of this coset we have $k \in K=G_{\tau(0)}$ such that $kg\sigma=\tau$. Finally, if $\sigma^\prime$ is any geodesic parallel to $\sigma$, then $kg\sigma^\prime$ will be parallel to $kg\sigma=\tau$. Hence $kgP(\sigma)\subseteq P(\tau)$ and equality follows by symmetry.
\end{proof}

\subsection{Asymptotic cones}

\label{sec:background:asymptotic} 

We briefly discuss here asymptotic cones and a result of Kleiner and Leeb about the asymptotic cones of a symmetric space. Before we define an asymptotic cone we should discuss ultralimits and ultrafilters.

A \emph{non-principal ultrafilter} $\omega$ on $\mathbb{N}$ is a finitely additive probability measure on $\mathbb{N}$ which takes values of either $0$ or $1$ and all finite sets have zero measure. Given a sequence $(a_{n})_{n \in \mathbb{N}}$ of real numbers the \emph{ultralimit} of this sequence is $a = \lim_{\omega}(a_{n})\in\R$ which has the property that $\omega \lbrace n \in \mathbb{N} \mid \left| a_{n}-a\right| < \varepsilon \rbrace=1$ for every $\varepsilon >0$.

Let $X$ be a metric space with metric $d$ and let $p=(p_{n})_{n \in \mathbb{N}}$ be a sequence of points in $X$. Let $(d_{n})_{n \in \mathbb{N}}$ be a sequence in $\left( 0,\infty\right)$ which diverges to infinity. Given a non-principal ultrafilter $\omega$ on $\mathbb{N}$ we can define the \emph{asymptotic cone} $\cone{X}{p}{d_{n}}$ to be the quotient space of sequences $(x_{n})_{n \in \mathbb{N}}$ such that $\lim _{\omega} \frac{d(p_{n},x_{n})}{d_{n}} < \infty$ under the equivalence relation saying that two sequences $(x_{n})$ and $(y_{n})$ are equivalent if and only if $\lim _{\omega} \frac{d(x_{n},y_{n})}{d_{n}} = 0$. We can define a metric $d_{\omega}$ on the cone by setting $d_{\omega}((x_{n}),(y_{n})) = \lim _{\omega} \frac{d(x_{n},y_{n})}{d_{n}}$. A point $x \in \cone{X}{p}{d_{n}}$ is said to be the ultralimit of a sequence of points $(x_{n})$ in $X$ if $(x_{n})$ is a member of the equivalence class determining $x$.

As explained in, for example, \cite[Appendix 5]{BGS85}, the ideal boundary of a symmetric space can be given a spherical building structure. If we take an asymptotic cone of a symmetric space $X$ then, as Kleiner and Leeb put it in \cite{KL97}, intuitively speaking we are ``pulling the spherical building structure from infinity to the space of directions.'' Spherical buildings have a useful property, which can be described as \emph{rigidity of angles}. This means that given two points in a spherical building, their location inside their respective chambers determines a finite set of possible (angular) distances between them. When this property is pulled to the tangent space, by taking the asymptotic cone of our symmetric space, it transfers to a similar statement regarding the angle between intersecting geodesics.

We refer the reader to \cite{KL97} for the definition of a Euclidean building used by Kleiner and Leeb. However, note that in her PhD thesis \cite{Parr00} Parreau showed that Kleiner and Leeb's axioms are equivalent to the definition of a building given by Tits. %(in Immeubles de type affine, corrected by Ronin in Lectures on Buildings).

\begin{thm}[Kleiner--Leeb \cite{KL97}] \label{thm:AC of X}
Let $X$ be a symmetric space of noncompact type. For any sequence of positive numbers $(d_{n})$ diverging to infinity and for any sequence of points $p=(p_{n})$ in $X$ the asymptotic cone $\cone{X}{p}{d_{n}}$ is a Euclidean building modelled on the Euclidean Coxeter complex $(E,M)$, where $E$ is $\mathrm{rank}(X)$--dimensional Euclidean space and $M$ is the quotient of the set-wise stabiliser $\mathrm{Stab}_{G}(E)$ by the point-wise stabiliser $\mathrm{Fix}_{G}(E)$.
\end{thm}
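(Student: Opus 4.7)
The plan is to verify that $X_{\omega} := \cone{X}{p}{d_{n}}$ satisfies Kleiner--Leeb's axioms of a Euclidean building modelled on $(E,M)$. First I would record the metric structure: since $X$ is CAT$(0)$ and the CAT$(0)$ inequality passes to ultralimits, $X_{\omega}$ is a complete CAT$(0)$ space, and in particular a uniquely geodesic metric space whose segments arise as ultralimits of geodesic segments in $X$. Next I would identify the candidate apartments: to each sequence $(F_{n})$ of maximal flats in $X$ satisfying $\lim_{\omega} d(p_{n},F_{n})/d_{n} < \infty$, associate the ultralimit $F_{\omega} \subset X_{\omega}$. By Lemma~\ref{lemma:flats and abelian subspaces}, each $F_{n}$ is isometric to $\R^{\rank{X}}$, so $F_{\omega}$ is isometric to $E$. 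The Weyl chamber decomposition of $F_{n}$, together with the induced action of $\Stab_{G}(F_{n})/\mathrm{Fix}_{G}(F_{n}) \cong M$, passes to the ultralimit and equips $F_{\omega}$ with a Coxeter complex structure modelled on $(E,M)$. Let $\mathcal{A}$ denote the collection of all such apartments.

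Second, I would verify the straightforward building axioms for $\mathcal{A}$. That any two apartments $F_{\omega}, F_{\omega}'$ meet in a convex set follows from convexity of intersections of flats in $X$ and preservation of convexity under ultralimits. That the transition map between two overlapping apartments is realised by an element of $M$ follows from the fact that, at each scale, the change of coordinates between the flats $F_{n}$ and $F_{n}'$ on their intersection is implemented by a Weyl element (by transitivity of $G$ on pairs of flats sharing a Weyl chamber), so the induced map on the limit apartments is the ultralimit of these finitely-many-valued isometries and hence lies in $M$. The one axiom requiring genuine work is that any two points of $X_{\omega}$ lie in a common apartment.

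The hard part, and the technical heart of the theorem, is establishing this last axiom. The key tool is \emph{asymptotic angle rigidity}: at each point of $X_{\omega}$ the space of directions inherits a spherical building structure from the Tits boundary $\bdry{X}$, so angles between germs of geodesics take values in a prescribed discrete set. I would first show every geodesic of $X_{\omega}$ is contained in some $F_{\omega} \in \mathcal{A}$, by approximating representing sequences by regular geodesics in $X$ --- each uniquely contained in a maximal flat --- and taking the ultralimit of these flats. I would then promote this to pairs of points by using angle rigidity to match up the directions at a common point of two overlapping geodesic segments, producing a single flat containing both. Once all axioms hold, Parreau's equivalence \cite{Parr00} identifies $X_{\omega}$ with a Euclidean building in the sense of Tits, completing the proof.
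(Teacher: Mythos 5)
The paper does not prove this theorem; it is cited directly from Kleiner and Leeb \cite{KL97}, with the accompanying remark that Parreau \cite{Parr00} later showed the Kleiner--Leeb axioms are equivalent to Tits's definition of a building. There is therefore no in-paper proof to compare your proposal against.

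Read as a reconstruction of Kleiner--Leeb's original argument, your outline captures the broad shape correctly: the cone is complete CAT$(0)$, ultralimits of sequences of maximal flats (with bounded rescaled distance to the basepoint sequence) supply candidate apartments isometric to $E$, and angle rigidity inherited from the Tits boundary is the engine behind the harder axioms. However two steps are thinner than they should be. First, your argument that transition maps between overlapping apartments lie in $M$ appeals to transitivity of $G$ on pairs of flats sharing a Weyl chamber, but $F_n$ and $F_n'$ need not share a chamber, and more importantly $F_\omega \cap F_\omega'$ can be strictly larger than the ultralimit of $F_n \cap F_n'$ --- flats that remain a sublinear distance apart without intersecting become coincident in the cone --- so the coordinate change cannot simply be read off at each scale $n$. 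Second, the step ``promote this to pairs of points using angle rigidity'' is where essentially all of the work in \cite{KL97} actually lives: one must first establish that the space of directions at each point of $X_\omega$ is a spherical building modelled on $\Dmod$ before angle rigidity is even a statement one can invoke, and the passage from ``every geodesic lies in an apartment'' to ``every pair of points lies in an apartment'' requires a genuine gluing argument, not just a remark that angles lie in a discrete set. As written these two steps are placeholders rather than proofs, which is acceptable for a theorem the paper quotes as a black box but would need to be filled in substantially for a self-contained argument.
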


\subsection{A note on lattices}

A \emph{lattice} in a semisimple real Lie group $G$ is a discrete subgroup $\Gamma$ such that $\Gamma \backslash G$ has finite volume with respect to the Haar measure on $G$. If the quotient $\Gamma \backslash G$ is compact, then we say $\Gamma$ is a \emph{cocompact} or \emph{uniform} lattice in $G$. Otherwise we say it is \emph{non-uniform}.
A lattice is said to be \emph{irreducible} if, whenever $G$ is given as a product of Lie groups $G_1 \times G_2$, then the projections of $\Gamma$ into each factor are dense.

Suppose that $\Gamma$ is irreducible. Let $d_\Gamma$ denote a word metric on $\Gamma$ with respect to some finite generating set. We can also consider the size of an element of $\Gamma$ using a Riemannian metric $d_G$ on $G$. By a theorem of Lubotzky, Mozes and Raghunathan \cite{LMR00}, provided the real rank of $G$ is at least $2$, it does not matter which we use. Furthermore, this means that if we fix a basepoint $p$ in the symmetric space $X$ then we could also use $d_\Gamma(p,\gamma p)$ to estimate $d_\Gamma(1,\gamma)$.

\section{Centralisers of real hyperbolic elements}\label{sec:semisimple:centraliser}

Let $g$ be a non-elliptic semisimple isometry of $X$ and suppose $\sigma:\R \to X$ is a geodesic translated by $g$, oriented so that $g\sigma(0) = \sigma(t)$ for some $t>0$. We define the \emph{slope} of $g$ to the be $\Dmod$--direction of $\sigma$ corresponding to the positive direction, $\sigma(\infty)$. Of course parallel geodesics are asymptotic and so we get the same $\Dmod$--direction regardless of which geodesic we consider. We say an isometry $g$ is \textit{regular semisimple} if it is non-elliptic semisimple and its slope is regular in $\Dmod$. When $g$ is non-elliptic semisimple but its slope is singular we say $g$ is a \textit{singular semisimple}  isometry.

\begin{lemma}\label{lemma:regular centraliser and flat stabiliser}
Let $a$ be a regular semisimple element in $G$, contained in a maximal torus $A$. Then:
\begin{enumerate}[label={({\arabic*})}]
\item\label{item:1:lemma:regular centraliser and flat stabiliser} there exists a unique maximal flat $F_a$ in $X$ which is stabilised by $a$;
\item\label{item:3:lemma:regular centraliser and flat stabiliser} for any $p \in F_a$, let $K$ be the stabiliser of $p$, then  $\Stab(F_a)=Z_G(a)N_K(A)$, where $Z_G(a)$ is the centraliser of $a$ in $G$ and $N_K(A)$ is the normaliser of $A$ in $K$.
\end{enumerate}
\end{lemma}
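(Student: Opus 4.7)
The plan is to split the two statements and attack each directly, using the specific model $F_a = \exp(\Lie{a})p$ for the flat in question, where $\Lie{a}$ is the maximal abelian subspace of $\Lie{p}$ that exponentiates to $A$, and $p$ is the basepoint of the Cartan decomposition. By Lemma \ref{lemma:flats and abelian subspaces}, $F_a$ is a maximal flat, and since $a\in A$ and $A$ is abelian, $a$ permutes $A\cdot p = F_a$. This handles existence in \ref{item:1:lemma:regular centraliser and flat stabiliser} immediately. For uniqueness, suppose $F$ is another maximal flat stabilised by $a$. Then the semisimple element $a$ acts on the flat $F$ by an isometry; it must translate some geodesic $\sigma\subset F$, and $\sigma$ is an axis of $a$. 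On the other hand $a$ has an axis $\sigma_0\subset F_a$. All axes of a semisimple isometry are parallel, so $\sigma$ is parallel to $\sigma_0$; because $a$ has regular slope, $\sigma_0$ is regular, hence $P(\sigma_0)$ is the unique maximal flat containing $\sigma_0$, namely $F_a$. Therefore $\sigma\subset F_a$. But $\sigma$ is also regular (being parallel to the regular geodesic $\sigma_0$), so by the proposition characterising regular geodesics it lies in a unique maximal flat, forcing $F = F_a$.

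For \ref{item:3:lemma:regular centraliser and flat stabiliser} the inclusion $Z_G(a)N_K(A)\subset\Stab(F_a)$ is cheap. If $z\in Z_G(a)$ then $zF_a$ is a maximal flat and $a\cdot zF_a = za\cdot F_a = zF_a$, so by the uniqueness already established $zF_a = F_a$. If $n\in N_K(A)$ then $nF_a = nAp = (nAn^{-1})np = Ap = F_a$, using $nAn^{-1}=A$ and $np=p$.

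For the reverse inclusion $\Stab(F_a)\subset Z_G(a)N_K(A)$, take $g\in\Stab(F_a)$. Since $gp\in F_a=Ap$, choose $b\in A$ with $bp=gp$ and set $k := b^{-1}g$, so that $g = bk$ and $k\in K$. Both $b$ and $g$ preserve $F_a$, hence so does $k$. Now $k$ fixes $p$ and stabilises $F_a$, so its differential at $p$ is an orthogonal transformation of $T_pF_a$; identifying $T_pX$ with $\Lie{p}$ via the Cartan decomposition, this differential equals $\Ad(k)|_\Lie{a}$, so $\Ad(k)$ preserves $\Lie{a}$. Exponentiating, $kAk^{-1} = A$, i.e. $k\in N_K(A)$. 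Since $A$ is abelian and $a\in A$, we have $b\in A\subset Z_G(a)$, and hence $g = bk\in Z_G(a)N_K(A)$, completing the proof.

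I expect the main technical subtlety to be the last step: deducing that the element $k\in K$ stabilising $F_a$ actually normalises the group $A$, not merely the flat. This rests on identifying the isometric action of $k$ on the tangent space $T_pF_a$ with the adjoint action $\Ad(k)$ on $\Lie{a}$, which is the standard link between the $G$-action on $X$ and the adjoint representation on $\Lie{p}$; once this identification is in hand, normalising $\Lie{a}$ is equivalent to normalising $A$ via the exponential map. Everything else is a direct combination of the uniqueness in \ref{item:1:lemma:regular centraliser and flat stabiliser} with the transitive action of $A$ on $F_a$.
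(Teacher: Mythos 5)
Your proof is correct and follows the same overall skeleton as the paper's (existence plus uniqueness for the flat, then the factorisation $g=bk$ with $b\in A$, $k\in K$ for the stabiliser), but two steps take a genuinely different route. For existence in part (1), you write down $F_a=Ap$ directly and note $a$ preserves its own coset space, whereas the paper instead decomposes $a=hk$ into real hyperbolic and elliptic commuting factors, argues that the elliptic part acts trivially on $F_a$ (a non-trivial elliptic isometry of a flat must permute Weyl chambers and so cannot fix a regular geodesic), and concludes $\MIN{a}=F_a$. The paper's route buys the extra piece of information that $F_a$ is exactly $\MIN{a}$, which it then re-uses tacitly in part (2) when asserting $F_a=Ap$; your route is shorter but does not record that identification. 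For the reverse inclusion in part (2), your argument that $k\in N_K(A)$ goes through the isotropy representation: $k$ fixes $p$ and preserves $F_a$, so its differential at $p$, which is $\Ad(k)|_{\Lie{p}}$, preserves $T_pF_a\cong\Lie{a}$, hence $\Ad(k)$ normalises $\Lie{a}$ and $kAk^{-1}=A$. The paper instead observes that $kAk^{-1}$ is a maximal torus with $kAk^{-1}\cdot p=F_a$ and invokes uniqueness of the torus through $p$ with that orbit. These are equivalent (both rest on the bijection between maximal abelian subspaces of $\Lie{p}$ and maximal flats through $p$), but yours makes the Lie-theoretic mechanism explicit while the paper appeals to the geometric classification; either is acceptable, and I see no gap beyond the simplifying conventions the paper itself uses (e.g.\ treating $A$ as an $\R$-split torus with $Ap$ a flat).
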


\begin{proof}
Suppose $a$ is regular semisimple, translating a geodesic $c$. We can decompose $a$ as $a=hk=kh$ where $h$ is real hyperbolic and $k$ is elliptic. The real hyperbolic component will translate $c$, hence $\MIN{h}$ is equal to the maximal flat $F_a$. Consider the action of $k=h^{-1}a$ on $F_a$. It will fix $c$ pointwise and hence must act trivially on $F_a$ since any non-trivial elliptic isometry on $F_a$ will permute the Weyl chambers and therefore cannot fix a regular geodesic. But this tells us that the action of $a$ on $F_a$ is precisely the same as that of $h$, thus $\MIN{a}=F_a$. 

Suppose that another flat $F'$ is stabilised by $a$. Since $a$ is not elliptic , it must act on $F'$ hyperbolically, by translating some geodesic in $F'$. This geodesic must therefore be contained in $\MIN{a}=F$. So $F$ and $F'$ intersect in a regular geodesic, hence must be equal. This proves \ref{item:1:lemma:regular centraliser and flat stabiliser}.

Let $p,K$ be as in \ref{item:3:lemma:regular centraliser and flat stabiliser} and note that $F_a=Ap$. Let $z \in Z_G(a)$ and $w \in N_K(A)$. Then $zwF_a=zF_a$ and $azF_a=zaF_a=zF_a$, so by \ref{item:1:lemma:regular centraliser and flat stabiliser} $zF_a=F_a$. Hence $Z_G(a)N_K(A) \subseteq \Stab(F_a)$.
Now let $g \in \Stab(F_a)$. Then there exists $b \in A$ such that $gp=bp$. Thus $b^{-1}g=k \in K$ and stabilises $F_a$. Since $kAk^{-1}p=F_a$ we see that $kAk^{-1}$ is a maximal torus stabilising the flat $F_a$. Such a torus is unique, so $k \in N_K(A)$. Hence $g=bk \in Z_G(a)N_K(A)$ and \ref{item:3:lemma:regular centraliser and flat stabiliser} holds.
\end{proof}

We can see that the orbit of the centraliser of a regular semisimple element will be a maximal flat. Consider instead a singular semisimple element that is also real hyperbolic element. We may call such elements \emph{singular real hyperbolic}. The following Lemma tells us that the orbit of the centraliser of a singular real hyperbolic element will contain many maximal flats.

\begin{lemma}\label{lemma:singular centraliser and MIN}
Let $a$ be a singular real hyperbolic element in $G$. Then:
\begin{enumerate}[label={({\arabic*})}]
\item\label{item:1:lemma:singular centraliser and MIN} the subspace $\MIN{a}$ is precisely the set of all geodesics translated by $a$, which is the Riemannian product of a Euclidean space and a symmetric space of noncompact type; and
\item\label{item:2:lemma:singular centraliser and MIN} for any $p \in \MIN{a}$, let $K$ be the stabiliser of $p$, then $$Z_G(a) \subseteq \Stab(\MIN{a}) \subseteq Z_G(a)K$$ where $Z_G(a)$ is the centraliser of $a$ in $G$. %\blue{(WE CAN STRENGTHEN THIS...)}
\end{enumerate}
\end{lemma}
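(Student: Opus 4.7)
The plan is to prove (1) first by a straightforward unpacking of the definition of real hyperbolicity combined with the structure theorem for parallel sets in symmetric spaces, then to prove (2) by showing that $Z_G(a)$ acts transitively on $\MIN{a}$, which reduces the reverse inclusion to an orbit calculation.

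For part (1), recall that $a$ being real hyperbolic means that $a$ translates some geodesic $c$ and that $\MIN{a} = P(c)$, the union of all geodesics in $X$ parallel to $c$. In a $\CAT{0}$--space, every geodesic parallel to the axis of a hyperbolic isometry is itself translated by the same amount, so each point of $\MIN{a}$ lies on an axis of $a$; conversely each axis is contained in $\MIN{a}$ by definition of translation length. Hence $\MIN{a}$ coincides with the union of all axes of $a$. For the splitting, I would invoke the structure theory of parallel sets of singular geodesics in a symmetric space of noncompact type (Eberlein, \S2.20): since the slope of $c$ is singular, $P(c)$ splits as a Riemannian product $E \times X'$, where $E$ is the Euclidean de Rham factor (whose dimension exceeds $1$ precisely because $c$ is singular) and $X'$ is a symmetric space of noncompact type.

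For part (2), the forward inclusion is immediate: if $z \in Z_G(a)$ and $x \in \MIN{a}$, then $d_X(zx, a\cdot zx) = d_X(zx, z\cdot ax) = d_X(x, ax)$ is the minimal displacement, so $zx \in \MIN{a}$. For the reverse inclusion, let $g \in \Stab(\MIN{a})$ and fix $p \in \MIN{a}$. The key claim is that $Z_G(a)$ acts transitively on $\MIN{a}$; assuming this, there exists $z \in Z_G(a)$ with $zp = gp$, and then $k := z^{-1}g$ fixes $p$, giving $g = zk \in Z_G(a)K$, as required.

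The substance of the argument, and the main obstacle, is therefore the transitivity claim. Using the product splitting $\MIN{a} = E \times X'$ from (1), the isometry $a$ acts on $\MIN{a}$ as translation by a fixed vector $v_a$ in the $E$-factor and as the identity on the $X'$-factor (the second part follows because each parallel axis is sent to itself with the same translation length, which forces the $X'$-coordinate to be preserved). I would then exhibit two commuting families of centralising elements: on the one hand, the connected abelian subgroup of $G$ corresponding to the maximal split torus stabilising $E \cdot p$, which acts on $\MIN{a}$ as the full group of Euclidean translations of $E$ and centralises $a$ because $a$ lies in the same torus; and on the other hand, a semisimple Lie subgroup $G' \leq Z_G(a)$ whose associated symmetric space is $X'$, acting trivially on the $E$-factor and transitively on $X'$. (That this $G'$ exists inside $Z_G(a)$ is the heart of the structural description of centralisers of singular real hyperbolic elements, and is again standard from Eberlein \S2.20 combined with the fact that $a$ acts trivially on the $X'$-factor.) Together these two families act transitively on $E \times X' = \MIN{a}$, completing the proof.
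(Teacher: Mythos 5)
Your treatment of part (1) coincides with the paper's: the first assertion is read off the definition of real hyperbolicity, and the product splitting of the parallel set is quoted from Eberlein (the paper cites 2.11.4 rather than \S2.20, but these cover the same structure theory). The forward inclusion in part (2) is also identical. Where you genuinely diverge is the reverse inclusion. The paper does not prove transitivity of $Z_G(a)$ on $\MIN{a}$; instead, given $g\in\Stab(\MIN{a})$, it takes the axis $c_1$ through $p$ and the axis $c_2$ through $gp$, notes they are parallel and so lie in a common maximal flat $F=Ap$ with $a$ in the maximal torus $A$, and then translates $gp$ back to $p$ by an element $b\in A\subseteq Z_G(a)$, giving $bg\in K$. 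Your route, by contrast, first establishes the stronger statement that $Z_G(a)$ acts transitively on $\MIN{a}$, by exhibiting a split torus acting by translations on the Euclidean factor $E$ and a semisimple subgroup $G'\leq Z_G(a)$ acting transitively on $X'$. Both arguments ultimately rest on the same product picture $\MIN{a}=E\times X'$: to justify the paper's claim that $c_1,c_2$ lie in a flat $F=Ap$ with $a\in A$, one effectively has to take $F=E\times F'$ for a maximal flat $F'$ of $X'$ through the $X'$-projections of $p$ and $gp$, which is the splitting you make explicit. Your version makes the Levi structure of $Z_G(a)$ an explicit input (and so needs the harder-to-cite fact that $G'\leq Z_G(a)$ exists with symmetric space $X'$), whereas the paper keeps that structure implicit and gets away with a shorter, one-flat argument. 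Either is acceptable, but be aware that you are quoting strictly more structure theory than the written proof does, and that the transitivity claim, while true, is not the path of least resistance here.
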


\begin{proof}
The first part of assertion \ref{item:1:lemma:singular centraliser and MIN} follows from the definition we gave for a real hyperbolic element, while for a proof of the latter part of \ref{item:1:lemma:singular centraliser and MIN} we refer the reader to \cite[2.11.4]{Eber96}.

For \ref{item:2:lemma:singular centraliser and MIN}, take $b \in Z_G(a)$ and let $c'$ be any geodesic translated by $a$. Then $bc'=bac'=abc'$ implies that $bc'$ is translated by $a$, hence is contained in $\MIN{a}$. Now let $g$ stabilise $\MIN{a}$. Let $c_1,c_2$ be the geodesics translated by $a$ such that $c_1(0)=p$ and $c_2(0)=gp$ respectively. Since $c_1$ and $c_2$ are parallel they are contained in a common flat $F=Ap$, where $A$ is a maximal abelian Lie subgroup of $G$ which contains $a$. There exists $b \in A$ such that $bc_2=c_1$ and $bgp=p$. Hence $bg \in K$ and in particular $g \in Z_G(a)K$.
\end{proof}

\section{Finding a short conjugator for real hyperbolic elements}\label{sec:semisimple:bounding in G}

The aim of this section is to obtain a control on the length of a conjugator between two real hyperbolic elements $a,b$ in $G$. The control will be linear, but the constant in the upper bound will depend on the slope of $a$ and $b$, and hence their conjugacy class. Our method of demonstrating this is to first show that a conjugator corresponds to an isometry that maps $\MIN{a}$ to $\MIN{b}$. Then, by obtaining a control on the distance from an arbitrary basepoint $p$ to $\MIN{a}$ in terms of $d_X(p,ap)$, we can obtain a control on the length of a conjugator from $G$.

\subsection{Relating conjugators to maps between flats}\label{sec:semisimple:bounding in G:maps between flats}

Here we show why we can obtain a short conjugator by understanding the distance to $\MIN{a}$ and $\MIN{b}$. In the following let $\pi_a$ be the orthogonal projection of $X$ onto $\MIN{a}$ and for $x \in X$ let $G_q=\{k \in G \mid gq=q\}$.

\begin{prop}\label{lem:conjugator and MIN sets}
Let $a,b$ be conjugate non-elliptic semisimple elements in $G$. Then:
\begin{enumerate}[label={({\arabic*})}]
		\item\label{item:1:prop:conjugator and MIN sets} for $g \in G$, if $gag^{-1}=b$ then $g\MIN{a}=\MIN{b}$;
		\item\label{item:2:prop:conjugator and MIN sets} for $h \in G$, if $h\MIN{a}=\MIN{b}$ then there exists $x \in G_{\pi_a(p)}$ such that $(hx)a(hx)^{-1}=b$.
\end{enumerate}
\end{prop}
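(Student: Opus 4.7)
For the first statement I would proceed by a direct isometry calculation. Since $g$ is an isometry and $b = gag^{-1}$, every $x \in X$ satisfies
\[
d_X(gx,\, b\cdot gx) \;=\; d_X(gx,\, g(ax)) \;=\; d_X(x,\, ax),
\]
so the displacement of $b$ at $gx$ coincides with the displacement of $a$ at $x$. Conjugate elements have equal infimum displacement, and hence $gx \in \MIN{b}$ iff $x \in \MIN{a}$, which gives $g\MIN{a} = \MIN{b}$.

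For the second statement, I would set $a' := h^{-1}bh$ and apply part (1) to the isometry $h^{-1}$ to obtain $\MIN{a'} = h^{-1}\MIN{b} = \MIN{a}$. Since the target relation $(hx)a(hx)^{-1}=b$ rewrites as $xax^{-1}=a'$, the problem reduces to producing $x \in G_q$, where $q := \pi_a(p)$, that conjugates $a$ to $a'$. My plan is to start from any conjugator $y \in G$ satisfying $yay^{-1}=a'$; by part (1) such a $y$ automatically lies in $\Stab(\MIN{a})$. The crucial observation is that the conjugation identity is unaffected by replacing $y$ with $yz$ for any $z \in Z_G(a)$, since $(yz)a(yz)^{-1} = y(zaz^{-1})y^{-1} = yay^{-1} = a'$. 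So it is enough to find $z \in Z_G(a)$ with $yz \in G_q$, equivalently $zq = y^{-1}q$.

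The existence of such a $z$ amounts to $Z_G(a)$ acting transitively on $\MIN{a}$, which I would extract from Lemma \ref{lemma:singular centraliser and MIN}(2): the containment $\Stab(\MIN{a}) \subseteq Z_G(a)\,G_q$ together with $G_q \cdot q = \{q\}$ gives $\Stab(\MIN{a}) \cdot q = Z_G(a) \cdot q$, while the Riemannian product description $\MIN{a} = \R^k \times Y$ from part (1) of the same lemma makes the $\Stab(\MIN{a})$-action on $\MIN{a}$ transitive by standard structural arguments. Since $y^{-1}q \in \MIN{a}$ (using $y \in \Stab(\MIN{a})$), one can then choose the required $z$, set $x := yz$, and verify $xq=q$ and $xax^{-1}= yay^{-1} = a'$, so that $hx$ is the sought conjugator.

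I do not anticipate any serious obstacle, as the argument is essentially a formal packaging of Lemma \ref{lemma:singular centraliser and MIN}. The only step that requires some care is the transitivity of $\Stab(\MIN{a})$ (and hence of $Z_G(a)$) on $\MIN{a}$, which is a standard consequence of the Riemannian product structure and could alternatively be read off from the arguments already internal to the proof of Lemma \ref{lemma:singular centraliser and MIN}(2).
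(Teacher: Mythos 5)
Your proof of part (1) is correct and, in fact, a little cleaner than the paper's: you argue directly from the definition of $\MIN{\cdot}$ via displacement functions (that $d_X(gx, b\cdot gx) = d_X(x,ax)$, so the infima agree), whereas the paper argues via the characterisation of $\MIN{\cdot}$ through translated geodesics. Your version is more elementary and works for arbitrary isometries of a $\CAT{0}$ space without invoking the geodesic picture; the paper's version sets up the geodesic language it uses later. Either is fine.

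For part (2) the core idea is right, but you have made the last step harder than it needs to be, and in doing so introduced a claim that the paper never uses and that you don't fully justify. You reduce correctly to finding $z \in Z_G(a)$ with $zq = y^{-1}q$ where $y$ conjugates $a$ to $a'=h^{-1}bh$ and $q = \pi_a(p)$, and you correctly note $y \in \Stab(\MIN{a})$. You then assert that the existence of such a $z$ ``amounts to $Z_G(a)$ acting transitively on $\MIN{a}$,'' and wave at this via the Riemannian product structure from Lemma~\ref{lemma:singular centraliser and MIN}(1). That transitivity is plausible, but it is a nontrivial structural fact, and you don't need it. You already observed that the containment $\Stab(\MIN{a}) \subseteq Z_G(a)G_q$ from Lemma~\ref{lemma:singular centraliser and MIN}(2) gives $\Stab(\MIN{a})\cdot q = Z_G(a)\cdot q$. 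Since $\Stab(\MIN{a})$ is a subgroup, $y^{-1} \in \Stab(\MIN{a})$ as well, so $y^{-1}q \in \Stab(\MIN{a})\cdot q = Z_G(a)\cdot q$, which hands you the desired $z$ with no transitivity claim. This is exactly the paper's route: it writes $g^{-1}h = zk$ with $z \in Z_G(a)$, $k \in G_q$, and takes $x = k^{-1}$, which is the same decomposition applied to $y^{-1} = g^{-1}h$. So the only genuine issue in your write-up is that you introduce and lean on an unproven transitivity statement when the containment you already quoted does the whole job.
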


\begin{proof}
For \ref{item:1:prop:conjugator and MIN sets}, let $c$ be any geodesic stabilised by $a$. Since $g=bga^{-1}$ we see that the geodesic $gc$ is translated by $b$ and so is contained in $\MIN{b}$. Hence $g\MIN{a} \subseteq \MIN{b}$. Similarly we get $g^{-1}\MIN{b} \subseteq \MIN{a}$ and \ref{item:1:prop:conjugator and MIN sets} is proved.

Next suppose that $g,h \in G$ are such that $gag^{-1}=b$ and $h\MIN{a}=\MIN{b}$. By the first part we observe $g^{-1}h\MIN{a}=\MIN{a}$, so $g^{-1}h \in \Stab(\MIN{a}) \subseteq Z_{G}(a)K$, with the latter relationship coming from Lemma \ref{lemma:singular centraliser and MIN}, where we take $K=G_{\pi_a(p)}$. Then there exists $x \in K$ such that $g^{-1}hx \in Z_{G}(a)$. This implies $(g^{-1}hx)a(g^{-1}hx)^{-1}=a$ and so $(hx)a(hx)^{-1}=gag^{-1}=b$, proving \ref{item:2:prop:conjugator and MIN sets}.
\end{proof}

If $a$ and $b$ are regular semisimple elements in $G$ then they each stabilise a unique maximal flat in $X$. Suppose $a$, $b$ stabilise maximal flats $F_a$ and $F_b$ respectively. By taking our basepoint $p$ to be in $F_a$ we can build a quadrilateral which has two vertices in $F_a$ and two vertices in $F_b$, as in Figure \ref{fig:quad}. In light of \ref{lem:conjugator and MIN sets}, the aim is to find an element $g \in G$ of a controlled size which maps the flat $F_a$ to $F_b$. We will then obtain a conjugator of controlled size.

\begin{figure}[h!]
   \labellist
    \tiny\hair 5pt
    \pinlabel $p$ [r] at 45 96
    \pinlabel $ap$ [r] at 45 148
    \pinlabel $gp$ [l] at 186 51
    \pinlabel $bgp=gap$ [b] at 184 102
 \small   \pinlabel $F_a$ at 12 175
    \pinlabel $F_b$ at 205 141
    \endlabellist

    \centering
    \includegraphics[width=5cm]{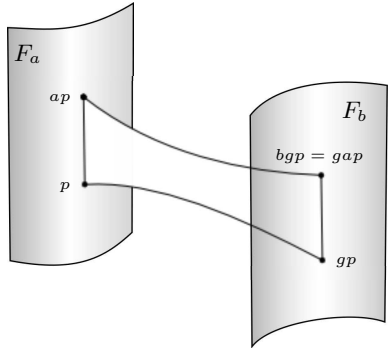}
     \caption[A quadrilateral demonstrating the conjugacy of hyperbolic elements]{A quadrilateral in $X$ demonstrating the conjugacy of $a$ and $b$ in $G$.}
     \label{fig:quad} 
\end{figure}

\begin{prop}\label{prop:distance to flat bounded implies conjugator bounded}
Let $p$ be our basepoint in $X$. Suppose for all semisimple $a$ in $G$ we can find a constant $\ell(a)$ such that:
$$d_X(p,\MIN{a})\leq \ell(a)d_X(p,ap)\textrm{.}$$
Then for $a,b$ conjugate hyperbolic elements in $G$ there exists a conjugator $g \in G$ such that:
$$d_X(p,gp) \leq \ell(a)d_X(p,ap)+\ell(b)(p,bp)\textrm{.}$$
\end{prop}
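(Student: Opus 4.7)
The plan is to exploit Proposition \ref{lem:conjugator and MIN sets} together with the fact that the centraliser of a real hyperbolic element acts transitively on its $\MIN$--set. Set $q_a := \pi_a(p) \in \MIN{a}$ and $q_b := \pi_b(p) \in \MIN{b}$, so that the hypothesis immediately yields
$$d_X(p,q_a) \leq \ell(a)\, d_X(p,ap) \qquad \text{and} \qquad d_X(p,q_b) \leq \ell(b)\, d_X(p,bp).$$
The strategy is to produce a conjugator $g \in G$ with the additional geometric property $gq_a = q_b$; the desired bound on $d_X(p,gp)$ will then drop out of the triangle inequality together with the fact that $g$ is an isometry.

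To construct such a $g$, first I would fix any conjugator $g_0 \in G$ with $g_0 a g_0^{-1} = b$; the first assertion of Proposition \ref{lem:conjugator and MIN sets} gives $g_0\MIN{a} = \MIN{b}$, and in particular $g_0 q_a \in \MIN{b}$. The key ingredient is then that $Z_G(b)$ acts transitively on $\MIN{b}$. In the regular semisimple case this is immediate from Lemma \ref{lemma:regular centraliser and flat stabiliser}, since the maximal torus $A$ containing $b$ lies inside $Z_G(b)$ and acts (simply) transitively on the unique flat $F_b = \MIN{b}$. In the singular real hyperbolic case, Lemma \ref{lemma:singular centraliser and MIN} identifies $\MIN{b}$ as a homogeneous symmetric subspace whose full stabiliser in $G$ sits inside $Z_G(b) K$, where $K$ is the stabiliser of $q_b$; decomposing any stabiliser element as $zk$ with $z \in Z_G(b)$ and $kq_b = q_b$ shows that the $Z_G(b)$--orbit of $q_b$ coincides with the $\Stab(\MIN{b})$--orbit of $q_b$, which is all of $\MIN{b}$, whence the required transitivity.

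Granted this transitivity, I would pick $z \in Z_G(b)$ such that $z(g_0 q_a) = q_b$ and set $g := z g_0$. Since $z$ commutes with $b$, the product $g$ is still a conjugator from $a$ to $b$, and by construction $gq_a = q_b$. The length bound now follows from
$$d_X(p,gp) \leq d_X(p,q_b) + d_X(q_b, gp) = d_X(p,q_b) + d_X(gq_a, gp) = d_X(p,q_b) + d_X(q_a,p),$$
where the middle equality uses $gq_a = q_b$ and the final one uses that $g$ is an isometry; substituting the two initial estimates concludes. The only step requiring substantive input beyond triangle-inequality bookkeeping is the transitivity of $Z_G(b)$ on $\MIN{b}$ in the singular case, but this is a direct consequence of Lemma \ref{lemma:singular centraliser and MIN}.
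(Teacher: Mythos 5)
Your proof is correct and uses the same essential ingredients as the paper (Proposition \ref{lem:conjugator and MIN sets}, the centraliser structure from Lemmas \ref{lemma:regular centraliser and flat stabiliser} and \ref{lemma:singular centraliser and MIN}, and the same triangle-inequality computation), but you reverse the order of the two moves. The paper first produces a $g_1 \in G$ with $g_1\MIN{a}=\MIN{b}$ and $g_1p_a=p_b$ (implicitly via the transitivity underlying Lemma \ref{lem:transitive actions on P sigma}), and then multiplies by an element of $G_{p_b}$ using Proposition \ref{lem:conjugator and MIN sets}~(2) to upgrade it to a conjugator; the final distance estimate is unchanged because the stabiliser correction fixes $p_b$. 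You instead start from an arbitrary conjugator $g_0$ and post-compose by $z\in Z_G(b)$ to force $gq_a=q_b$, which requires $Z_G(b)$ to act transitively on $\MIN{b}$. The two routes need essentially the same transitivity input, and the length estimate is identical in both.

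One small wrinkle in your justification of that transitivity in the singular case: you say Lemma \ref{lemma:singular centraliser and MIN} identifies $\MIN{b}$ as a homogeneous symmetric subspace and hence $\Stab(\MIN{b})$ acts transitively on it. Intrinsic homogeneity of the Riemannian product does not immediately give transitivity of the \emph{ambient} stabiliser, so that step deserves a sentence. The cleaner argument, already implicit in the proof of Lemma \ref{lemma:singular centraliser and MIN}~(2), avoids $\Stab(\MIN{b})$ altogether: any two points $p,q\in\MIN{b}$ lie on geodesics translated by $b$, which are parallel and therefore contained in a common maximal flat $F=Aq$ with $b\in A$; since $A$ acts transitively on $F$ and $A\subseteq Z_G(b)$, some $z\in Z_G(b)$ sends $q$ to $p$. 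With that substitution your proof is fully watertight and matches the paper's argument in all essentials.
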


\begin{proof}
Choose points $p_{a} \in \MIN{a}$ and $p_{b} \in \MIN{b}$ which satisfy 
		$$d_{X}(p,p_{a}) \leq \ell(a)d_{X}(p,ap)\ \ \textrm{and} \ \ d_{X}(p,p_{b}) \leq \ell(b)d_{X}(p,bp).$$
Let $g_{1}\in G$ be such that $g_{1}\MIN{a}=\MIN{b}$ and $g_{1}p_{a}=p_{b}$. By Lemma \ref{lem:conjugator and MIN sets} there exists $x$ in $G_{p_b}$ such that $(xg_{1})a(xg_{1})^{-1}=b$. Let $g=xg_{1}$.

\begin{figure}[h!]
   \labellist
      \small\hair 5pt
       \pinlabel $p$ [r] at 40 177
       \pinlabel $p_{a}$ [t] at 74 41
       \tiny \pinlabel $p_{b}=gp_{a}$ [t] at 235 128
       \small \pinlabel $gp$ [b] at 185 219
       \tiny
       \pinlabel $\MIN{a}$ at 27 18
       \pinlabel $\MIN{b}$ at 238 82
    \endlabellist
    \centering
    \includegraphics[width=7cm]{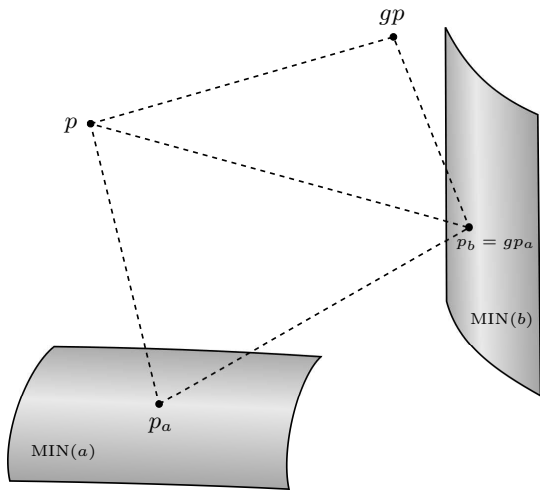}
  \caption[Using the distance to flats to bound the length of a conjugator]{Obtaining an upper bound on $d_{X}(p,gp)$.}
\end{figure}

To finish the proof we need to check that we have the required upper bound on $d_{X}(p,gp)$. By the triangle inequality:
\begin{eqnarray*}
d_{X}(p,gp) & \leq & d_{X}(p,p_{b})+d_{X}(p_{b},gp) \\
            & =    & d_{X}(p,p_{b}) + d_{X}(gp_{a},gp) \\
            & =    & d_{X}(p,p_{b})+d_{X}(p,p_{a})) \\
            & \leq & \ell(a)d_{X}(p,ap)+\ell(b)d_{X}(p,bp)\textrm{.}
\end{eqnarray*}
\end{proof}

\subsection{Bounding the distance to a flat}\label{sec:semisimple:bounding in G:distance to flat}

In Proposition \ref{prop:distance to flat bounded implies conjugator bounded} we saw how finding some constant $\ell(a)$ such that $d_X(p,\MIN{a})\leq \ell(a)d_X(p,ap)$ helps us to control the size of a conjugator in $G$ between $a$ and another element $b$. The aim of this section is to find such constants for the case when $a$ is real hyperbolic. In fact, we will determine a value for $\ell(a)$ which depends only on the slope of $a$. Note that we have the following:

\begin{lemma}
If $a$ is conjugate to $b$ then $a$ and $b$ have the same slope.
\end{lemma}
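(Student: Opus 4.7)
The proof I envision is essentially a one-line observation once the definitions are unpacked: the slope is a conjugation invariant essentially by construction, because $\Dmod$ is the quotient of $\bdry{X}$ by the $G$-action. The plan is therefore to exhibit, starting from a geodesic translated by $a$, an explicit geodesic translated by $b$ whose endpoint at infinity is $G$-related to that of the first, and then to appeal to the definition of $\Dmod$.

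Concretely, I would fix $g \in G$ with $b = gag^{-1}$ and a bi-infinite geodesic $\sigma:\R \to X$ translated by $a$ in the positive direction, say $a\sigma(t) = \sigma(t+T)$ for some $T > 0$ (such a $\sigma$ exists since $a$ is non-elliptic semisimple). A direct computation shows that the geodesic $g\sigma$ is translated by $b$ in the positive direction by the same amount:
\[
b\bigl(g\sigma(t)\bigr) = gag^{-1}\bigl(g\sigma(t)\bigr) = g\bigl(a\sigma(t)\bigr) = g\sigma(t+T).
\]
In particular $g\sigma$ is a geodesic translated by $b$ with positive endpoint $g\cdot\sigma(\infty) \in \bdry{X}$.

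To conclude, I would invoke the construction of $\Dmod$ from Section~\ref{sec:lattice background:Ideal Boundary}: the slope of $a$ is by definition the image of $\sigma(\infty)$ under the quotient map $\bdry{X} \to \Dmod$, and similarly the slope of $b$ is the image of $g\cdot\sigma(\infty)$. Since $\Dmod$ is precisely the set of $G$-orbits in $\bdry X$, any two $G$-related ideal points project to the same element of $\Dmod$, so the slopes coincide. There is no genuine obstacle here; the only thing requiring a line of verification is the compatibility of the $a$-action on $\sigma$ with the $b$-action on $g\sigma$, and this is a trivial rearrangement.
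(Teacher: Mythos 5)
Your proof is correct and takes essentially the same approach as the paper: both observe that $g$ carries a geodesic translated by $a$ to one translated by $b$ (via the computation $b(g\sigma) = gag^{-1}g\sigma = ga\sigma$), and then conclude by the $G$-invariance of the $\Dmod$-direction, which holds by definition of $\Dmod$ as the quotient $\bdry{X}/G$.
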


\begin{proof}
Let $a$ have slope $\xi \in \Dmod$. This means that the geodesic segment $\left[ p , ap \right]$, where $p$ is a point in $\textrm{MIN}(a)$, has $\Dmod$-direction $\xi$. Suppose $b=gag^{-1}$ for some $g \in G$. Then $g$ maps the bi-infinite geodesic through $p$ and $ap$ to a bi-infinite geodesic through $gp$ and $gap=bgp$. This geodesic is translated by $b$, so the slope of $b$ is the $\Dmod$-direction of the geodesic segment $\left[ gp,bgp \right] = g\left[ p,ap \right]$. Since the $\Dmod$--direction is defined to be $G$--invariant we have that the slope of $b$ is $\xi$.
\end{proof}

In order to determine the value of $\ell(a)$ we will use an asymptotic cone of $X$, which, by a result of Kleiner and Leeb \cite{KL97}, is a Euclidean building. It is helpful therefore to first determine the corresponding value in a Euclidean building. This will then be useful to find the value for symmetric spaces.
Recall that a real hyperbolic element $a$ satisfies $\MIN{a}=P(\sigma)$, where $\sigma$ is any geodesic translated by $a$ and $P(\sigma)$ is the subspace of $X$ consisting of all geodesics parallel to $\sigma$. We are therefore interested in the distance to similarly defined subspaces of a Euclidean building. Also recall that, for a Euclidean building $Y$, the quotient of $\bdry{Y}$ by the group of isometries of $Y$ is denoted by $\Dmod$, and $\theta : \bdry{Y} \to \Dmod$ is the natural map.

\begin{lemma}[{see \cite[Lemma 5.1]{HKM10}}]\label{lem:ray enters family of flats}
Let $Y$ be a Euclidean building, $\delta \in \bdry{Y}$, $c$ be a geodesic in $Y$ with one end asymptotic to $\delta$ and $E$ be the subset of $Y$ consisting of all geodesics parallel to $c$. Then for any point $p \in Y$ the geodesic ray emanating from $p$ which is asymptotic to $\delta$ enters the set $E$ in finite distance.
\end{lemma}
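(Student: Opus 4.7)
The plan is to compare $\rho$ with a reference ray inside $E$ using $\CAT{0}$ convexity, then upgrade the comparison to an actual coincidence using the Euclidean building structure of $Y$.  If $p \in E$ then $\rho$ stays in $E$, since $E$ is convex and $\delta \in \bdry{E}$; so I would assume $p \notin E$ and set $p' := \pi_{E}(p)$, the closest-point projection of $p$ onto $E$.  Let $\rho'$ denote the geodesic ray from $p'$ asymptotic to $\delta$.  Because $E = P(c)$ splits as a product $E_{0} \times \R$ with the $\R$-factor parametrising geodesics toward $\delta$, the ray $\rho'$ lies in $E$.  By $\CAT{0}$ convexity of the distance function along asymptotic rays, the function $f(t) := d(\rho(t), \rho'(t))$ is convex and non-increasing, bounded above by $d(p,p')$, and admits a limit $L := \lim_{t \to \infty} f(t) \in [0, d(p,p')]$.

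The crucial step is to produce a common apartment $A \subset Y$ and a time $T \geq 0$ such that $\rho|_{[T,\infty)}, \rho'|_{[T,\infty)} \subset A$.  I expect this to come out of the Euclidean building axioms: pick apartments through $p$ and through $p'$ each having a Weyl chamber at infinity containing $\delta$ in its boundary, and use the retraction onto such an apartment based at $\delta$ (which is $1$-Lipschitz and preserves Busemann functions based at $\delta$) to align them on terminal subrays.  Inside the Euclidean flat $A$, the two rays are then Euclidean-parallel rays asymptotic to $\delta$, so $f(t) = L$ for every $t \geq T$.  If $L = 0$ this already gives the lemma: $\rho(t) = \rho'(t) \in E$ for every $t \geq T$.

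If instead $L > 0$, the Flat Strip Theorem inside $A$ lets me extend the half-strip bounded by $\rho|_{[T,\infty)}$ and $\rho'|_{[T,\infty)}$ backwards to a full flat strip $[0,L] \times \R \subset A$.  The backward extension of $\rho'$ inside $A$ is, by uniqueness of geodesic extension in a prescribed reverse direction, the same as its backward extension inside $E$, namely the line in $E = E_{0} \times \R$ through $p'$ with ideal endpoints $\delta$ and $\delta' := c(-\infty)$.  The parallel line extending $\rho$ inside the flat strip therefore has ideal endpoints $\{\delta, \delta'\}$, hence is a line parallel to $c$, hence is contained in $P(c) = E$.  In particular $p = \rho(0)$ would then lie in $E$, contradicting the assumption $p \notin E$.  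So $L = 0$ is forced, and $\rho$ enters $E$ at time $T$.

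The main obstacle I expect is the construction of the common apartment $A$ in the second paragraph: the $\CAT{0}$ input is routine and the flat-strip / extension argument is standard once one is inside a Euclidean flat, but arranging that two asymptotic rays eventually lie in a single apartment uses the building axioms in an essential way, and is really where the distinction between Euclidean buildings and general $\CAT{0}$ spaces (where the limit $L$ can be approached asymptotically without ever being attained) comes in.
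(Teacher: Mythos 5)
The paper does not actually prove this lemma: it simply cites \cite[Lemma 5.1]{HKM10} and, in the remark that follows, notes that the proof there only covers \emph{algebraic} Euclidean buildings, with \cite{KT04} invoked to guarantee that the asymptotic cones in play are of this type. Your proposal is therefore a genuine proof attempt where the paper has none, which is a legitimate thing to supply; but two steps in it do not currently go through.

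First, the claim that ``the backward extension of $\rho'$ inside $A$ is, by uniqueness of geodesic extension in a prescribed reverse direction, the same as its backward extension inside $E$'' is not correct in a Euclidean building: geodesic extensions \emph{branch} (the link of a point is a spherical building, in which a given direction generally has many antipodes), so the extension of $\rho'|_{[T,\infty)}$ inside the Euclidean flat $A$ and its extension inside $E$ (namely $c'$) are different extensions of the same ray and have no a priori reason to agree, nor even to share the backward ideal endpoint $\delta'$. Second, even granting that the backward extension $\ell$ of $\rho|_{[T,\infty)}$ inside $A$ is parallel to $c$, you cannot conclude that $p=\rho(0)\in E$: the segment $\ell|_{[0,T]}$ is the backward continuation of $\rho|_{[T,\infty)}$ \emph{within $A$}, and since $p$ need not lie in $A$, there is no reason for $\ell(0)$ to equal $p$. (This second flaw is harmless as stated, because $\ell(T)=\rho(T)\in E$ would already prove the lemma directly, so you should just conclude that rather than aim for a contradiction at $p$.) Both gaps disappear if the ``common apartment'' step — which you rightly flag as the real content — is strengthened to produce an apartment $A$ containing \emph{all of} the line $c'$ (equivalently $c$) together with a terminal subray of $\rho$; then $\ell$ is forced to share both ideal endpoints with $c$ (in the Euclidean flat $A$ the antipode of $\delta$ in $\partial_\infty A$ is unique), so $\ell$ is parallel to $c$, lies in $E$, and $\rho(T)\in E$. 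Whether such an $A$ exists is precisely the non-trivial input from building theory, and it is also precisely where the distinction between algebraic and general $\R$-buildings (which the paper's remark is careful about, and which your write-up does not address) is expected to enter.
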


\begin{remark}
The proof of this Lemma given in \cite[Lemma 5.1]{HKM10} only covers algebraic Euclidean buildings. It is worth noting that by \cite{KT04} the asymptotic cone of a symmetric space is an algebraic Euclidean building, so their proof applies to the buildings which we are concerned with.
\end{remark}

\begin{prop}\label{prop:distance to MIN in Euclidean building when regular} 
Let $a$ be an isometry of a Euclidean building $Y$ such that $a$ translates all geodesics parallel to a geodesic $c$ and let $E$ be the subset of  $Y$ containing all these geodesics. Suppose the ray $c[0,\infty)$ satisfies $ac[0,\infty)\subset c[0,\infty)$ and is asymptotic to $\delta \in \bdry{E}$, where $\theta ( \delta ) = \xi \in \Dmod$. Then there exists a constant $\ell_{\xi}$ such that for any basepoint $p$ in $Y$ the following holds:
$$d(p,E) \leq \ell_{\xi} d(p,ap)\textrm{.}$$
\end{prop}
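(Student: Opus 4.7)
Let $q_0 := \pi_E(p)$ denote the CAT(0) nearest-point projection of $p$ onto $E$; the plan is to apply the CAT(0) law of cosines in the triangles $(p, q_0, a q_0)$ and $(p, a q_0, ap)$, using the combinatorial rigidity of the Euclidean building to control the relevant angles.

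Because $a$ translates every geodesic parallel to $c$, its restriction to the parallel set $E \cong \R^k \times Y'$ is a pure translation of length $s := |a|_E > 0$ in the $\delta$-direction, acting trivially on the $Y'$ factor (any nontrivial action on $Y'$ would move some parallel geodesic off itself). Since the projection $\pi_E$ commutes with isometries preserving $E$, we have $\pi_E(ap) = a q_0$, so
\[d(p, q_0) = d(ap, a q_0) = h := d(p, E), \qquad d(q_0, a q_0) = s.\]
The projection property $\angle_{q_0}(p, a q_0) \geq \pi/2$ combined with the CAT(0) law of cosines gives $d(p, a q_0)^2 \geq h^2 + s^2$.

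The crucial step is to bound the angle $\vartheta := \angle_{a q_0}(p, ap)$ from below by $\pi/2$. The geodesic $[a q_0, ap]$ is transverse to $E$ at $a q_0$ by the projection property there, while the initial tangent of $[a q_0, p]$ lies in $E$: in the Euclidean-building setting, $p$ and $ap$ sit in distinct ``branches'' above their projections, so the geodesic from $a q_0 \in E$ to $p$ must first travel within $E$ back toward $q_0$ before leaving $E$ for $p$. Rigidity of angles in the spherical-building link at $a q_0$ then yields $\vartheta \geq \pi/2$, and in fact $\vartheta \geq \vartheta_\xi > \pi/2$ for some slope-dependent $\vartheta_\xi$ coming from the Weyl-chamber combinatorics. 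The CAT(0) law of cosines applied to $(p, a q_0, ap)$ then gives
\[d(p, ap)^2 \;\geq\; d(p, a q_0)^2 + h^2 - 2 d(p, a q_0)\, h \cos\vartheta \;\geq\; 2 h^2,\]
so $d(p, E) \leq d(p, ap)/\sqrt{2}$, proving the proposition with $\ell_\xi = 1/\sqrt{2}$ (a sharper constant comes from $\vartheta_\xi$).

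The main obstacle is justifying the angle estimate $\vartheta \geq \pi/2$ rigorously: although the ``branching'' heuristic is geometrically natural, making it precise requires a careful analysis of how in-$E$ and transverse-to-$E$ tangent directions arrange themselves in the spherical-building link at $a q_0$. Lemma~\ref{lem:ray enters family of flats} enters here by guaranteeing that the ray from any point to $\delta$ meets $E$, thereby aligning the branching structure of $Y$ with the direction $\delta$.
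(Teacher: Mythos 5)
Your approach is related in spirit to the paper's but differs in a key place, and that difference is precisely where the gap is. The paper's proof does not project $p$ to $E$. Instead it takes the geodesic ray from $p$ asymptotic to $\delta$, which by Lemma~\ref{lem:ray enters family of flats} enters $E$ at some point $e$, and then works with the triangle $(p, ae, ap)$. The point of that choice is that $ae$ lies further along the ray than $e$ (since $a$ translates towards $\delta$), so the initial portion $[ae,e]$ of $[ae,p]$ is contained in $E$ \emph{by construction}; the direction of $[ae,p]$ at $ae$ is the $-\delta$ direction inside $E$, and no argument is needed. The paper then invokes rigidity of angles to say $\angle_{ae}(p,ap)$ lies in a finite set $D(\xi)$, takes $\phi = \min D(\xi)$, and concludes $\ell_\xi = 1/\sin\phi$ --- a genuinely slope-dependent constant.

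You replace $ae$ by $aq_0 = \pi_E(ap)$, and your whole estimate hinges on $\angle_{aq_0}(p,ap) \geq \pi/2$, which you reduce to the assertion that the initial tangent of $[aq_0,p]$ lies in $E$. That assertion is not self-evident: unlike $[ae,p]$, the geodesic $[aq_0,p]$ is not forced by construction to begin inside $E$. The geodesic from a point of $E$ to $p$ need not pass through $q_0$ (this is a tree-like phenomenon that fails in general CAT(0) spaces); what would make it work here is a structural fact about how the components of $Y \setminus E$ attach to $E$ (along singular subflats, which a regular translation $a$ moves off themselves), combined with Lemma~\ref{lem:ray enters family of flats}. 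You gesture at exactly this at the end, and correctly identify it as ``the main obstacle,'' but the proposal does not supply the argument. As written, this is a genuine gap, not a routine detail.

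One further point worth flagging: if your angle claim were established, the resulting constant $1/\sqrt{2}$ would be \emph{uniform in $\xi$}, strictly stronger than the paper's $1/\sin(\min D(\xi))$, which degenerates as $\xi$ approaches a singular slope. That is not automatically impossible --- the slope-dependence in Lemma~\ref{lemma:distance to MIN in symmetric space} and Section~\ref{sec:semisimple:no uniform bound} could in principle be absorbed entirely into $d_\xi$ --- but a blind proof that upgrades a stated slope-dependent constant to a uniform one should raise your suspicion and prompt you to either find the error or recognize you have found an improvement. Here the unproved angle estimate is where that tension is concentrated, and it is the step you would need to nail down before the argument could be accepted.
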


\begin{proof}
\begin{figure}[h!]
   \labellist
      \small \hair 5pt
         \pinlabel $p$ [b] at 37 195
         \pinlabel $ap$ [b] at 177 199
         \pinlabel $e$ [t] at 175 59
         \pinlabel $ae$ [t] at 251 59
         \pinlabel $E$ at 75 35
         \pinlabel $\delta$ [l] at 429 59
      \endlabellist
   \centering
   \includegraphics[width=10.2cm]{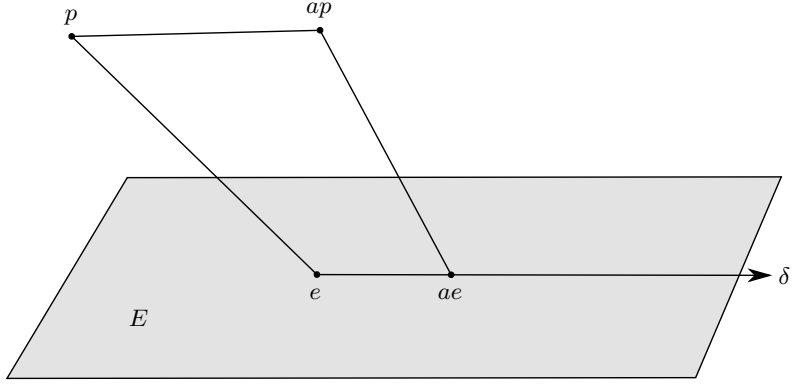}
  \caption[Bounding the distance to an apartment in a Euclidean building]{The angle $\measuredangle_{ae}(e,ap)$ is rigid, that is the angle is contained in a finite set which is determined by $\xi$. This leads to a bound on the distance from $p$ to $E$.}
 \label{fig:EuclideanBuildingRays} 
\end{figure}
Consider the ray emanating from $p$ which represents $\delta$. By Lemma \ref{lem:ray enters family of flats} this ray enters $E$. Let $e$ be the first point along this ray such that $e \in E$. By design $ae$ also lies on this ray. Now translate the ray by $a$. What we get is a geodesic triangle in $Y$, as seen in Figure \ref{fig:EuclideanBuildingRays}, with vertices $p,ae,ap$. By the rigidity of angles in Y, $\measuredangle_{ae}(p,ap)$ belongs to the finite set $D(\xi)$. Let $\phi$ be minimal in this set. Then since $Y$ is a $\textrm{CAT}(0)$ space we have that $d(p,ap)\geq d(ap,ae) \sin \phi$. Hence we put $\ell_{\xi}=\frac{1}{\sin \phi}$ and the proposition holds.
\end{proof}

\begin{lemma}\label{lemma:distance to MIN in symmetric space} 
Let the Tits building structure on $\bdry{X}$ have anisotropy polyhedron $\Delta_{mod}$. Fix a basepoint $p$ in $X$. Then for each element $\xi \in \Delta_{mod}$ there exists positive constants $\ell_{\xi}$ and $d_{\xi}$ such that for each $a \in G$ which is real hyperbolic of slope $\xi$ and such that $d_{X}(p,ap)>d_{\xi}$ the following holds:
$$d_{X}(p, \MIN{a}) \leq 2\ell_{\xi}d_{X}(p,ap)\textrm{.}$$
\end{lemma}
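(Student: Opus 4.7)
My plan is to argue by contradiction and transfer the problem to a Euclidean building via the asymptotic cone, where Proposition~\ref{prop:distance to MIN in Euclidean building when regular} already supplies the analogous estimate. Fix the constant $\ell_\xi$ furnished by that proposition for the slope $\xi$. If the conclusion of the lemma fails for this choice, then there exists a sequence $(a_n)$ of real hyperbolic elements of slope $\xi$ with $d_n := d_X(p, a_n p) \to \infty$ and $d_X(p, \MIN{a_n}) > 2\ell_\xi d_n$ for every $n$.

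I would then form the asymptotic cone $Y := \cone{X}{p}{d_n}$, which is a Euclidean building by Theorem~\ref{thm:AC of X}. Because each $a_n$ is an isometry of $X$, the sequence $(a_n)$ descends to an isometry $\tilde a$ of $Y$ sending the basepoint $p_\omega := [(p)]$ to $[(a_n p)]$, which lies at distance exactly $1$ from $p_\omega$.

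For each $n$, let $p_n := \pi_{\MIN{a_n}}(p)$ and let $\sigma_n$ be the axis of $a_n$ through $p_n$, which has slope $\xi$. The rescaled sequence $(\sigma_n)$ ultralimits to a bi-infinite geodesic $\tilde c$ in $Y$, still of slope $\xi$, and the parallel sets $\MIN{a_n} = P(\sigma_n)$ ultralimit to $\tilde E := P(\tilde c)$. Using the product decomposition $\MIN{a_n} \cong \R^{k}\times X_n'$ from Lemma~\ref{lemma:singular centraliser and MIN} together with the fact that $a_n$ acts as a pure translation on the $\R^{k}$-factor, one checks that $\tilde a$ translates every geodesic of $Y$ parallel to $\tilde c$, so the hypotheses of Proposition~\ref{prop:distance to MIN in Euclidean building when regular} are satisfied. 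Since orthogonal projection onto a closed convex subset is $1$-Lipschitz and commutes with ultralimits, $[(p_n)]$ realises the projection of $p_\omega$ onto $\tilde E$, whence $d_Y(p_\omega, \tilde E) = \lim_\omega d_X(p, p_n)/d_n \geq 2\ell_\xi$. But Proposition~\ref{prop:distance to MIN in Euclidean building when regular} applied to $\tilde a$ yields $d_Y(p_\omega, \tilde E) \leq \ell_\xi \cdot d_Y(p_\omega, \tilde a p_\omega) = \ell_\xi$, a contradiction.

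The main obstacle lies in the convergence analysis that underlies the above sketch. First, one must ensure that the ratio $D_n/d_n$, with $D_n := d_X(p, \MIN{a_n})$, remains bounded so that $[(p_n)]$ is a genuine point of $Y$; this rests on the geometric fact that, for real hyperbolic elements of fixed slope in a symmetric space of noncompact type, the displacement $d_X(p, a p)$ grows with $d_X(p, \MIN{a})$, so the hypothesis $d_n \to \infty$ keeps $D_n/d_n$ in a bounded range (and this is precisely where the threshold $d_\xi$ enters). Second, and more delicate, is verifying that the ultralimit isometry $\tilde a$ translates \emph{every} geodesic of $Y$ parallel to $\tilde c$, not merely $\tilde c$ itself; this uses Lemma~\ref{lemma:singular centraliser and MIN} together with the transitivity provided by Lemma~\ref{lem:transitive actions on P sigma} to identify the ultralimit of the $\MIN{a_n}$ with $\tilde E$ and to transfer the translation action of $a_n$ on each parallel flat to $\tilde a$ in the limit.
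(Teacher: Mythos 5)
Your proposal shares the paper's high-level strategy (proof by contradiction, transfer to a Euclidean building via the asymptotic cone, then invoke Proposition~\ref{prop:distance to MIN in Euclidean building when regular}), but it contains a gap that the paper goes to considerable length to close. You rescale by $d_n := d_X(p, a_n p)$, whereas the paper rescales by $D_n := d_X(p, \MIN{a_n})$. Under the contradiction hypothesis $D_n > 2\ell_\xi d_n$ there is no a priori upper bound on the ratio $D_n/d_n$ --- it may diverge --- so in your cone $Y = \cone{X}{p}{d_n}$ the sets $\MIN{a_n}$ may escape to infinity and $\tilde E$ need not exist at finite distance from $p_\omega$. You identify this as ``the main obstacle'' and propose to resolve it by the ``geometric fact'' that displacement grows with distance to $\MIN{a}$, but what you need is a \emph{linear} lower bound on displacement in terms of distance to $\MIN{a}$ that is uniform over all real hyperbolic $a$ of slope $\xi$, and that is precisely the statement of the lemma. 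The argument is circular as written.

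The paper circumvents this by scaling by $D_n$, which forces $d_\omega(p,E) = 1$ unconditionally. The cost is that the ultralimit isometry $g$ may move the basepoint a distance $\lim_\omega(d_n/D_n)$, which can be zero; and worse, the translation lengths $t_n$ may be negligible relative to $D_n$, in which case $g$ fixes $E$ pointwise and the Euclidean building proposition yields nothing. This is why the paper splits into three cases. Case 1 (where $t_n/D_n$ does not tend to zero) is the one your proposal essentially covers. Cases 2 and 3 (where $t_n/D_n \to 0$, with $d_n/D_n$ tending to zero or not) require an entirely different quadrilateral argument: one constructs quadrilaterals $Q_n$ with one side on a fixed flat $F$, shows via a $\CAT{0}$ comparison that they limit to a flat square $Q$ meeting $F$ only in a regular geodesic segment, and derives a contradiction from the rigidity of flats in the symmetric space. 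Your proposal contains no substitute for this part of the argument, and the ``convergence analysis'' you defer would in fact have to reproduce it.
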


\begin{proof}[Proof {[regular slopes]}]
The proof which follows applies to the case when $\xi$ is regular. The proof for singular slopes is analogous, with slight modifications which are described at the end of this proof.

First note that the constant $\ell_{\xi}$ will be the same constant that we obtained in Proposition \ref{prop:distance to MIN in Euclidean building when regular}.

We proceed by contradiction, supposing the statement is false. We then obtain a sequence of regular semisimple elements $a_n$ in $G$, each of slope $\xi$, such that $d_{X}(p,a_{n}p)$ diverges to infinity and which satisfies: 
\begin{equation}\label{eq:lemma:distance to MIN in symmetric space when regular:contra}
d_X(p,F_{n})>2\ell_{\xi}d_{X}(p,a_{n}p)
\end{equation} where $F_{n}$ is the unique maximal flat stabilised by $a_{n}$. Write $d_{n}:=d_{X}(p,a_{n}p)$ and $D_{n}:=d_{X}(p,F_{n})$. Let $\pi_{n} : X \rightarrow F_{n}$ be the orthogonal projection onto $F_{n}$ and let $t_{n}$ be the translation length of $a_n$, that is $t_{n}:=d_{X}(\pi_{n}(p),a_{n}\pi_{n}(p))$. We split the proof into three parts depending on the limits of the ratios $t_{n}/D_{n}$ and $d_{n}/D_{n}$.

\vspace{2mm}\noindent\textsc{\underline{Case 1:}} $\lim_\omega(t_n/D_n)\neq 0 \neq \lim_\omega(d_n/D_n)	$.\vspace{2mm} 

\noindent In the first part of the proof we build a Euclidean building and use Proposition \ref{prop:distance to MIN in Euclidean building when regular} to obtain a contradiction under the assumption that $t_{n}/D_{n}$ does not converge to zero.

Since $d_{n}$ diverges to infinity, it follows from (\ref{eq:lemma:distance to MIN in symmetric space when regular:contra}) that $D_n$ does too. Hence we pick a non-principal ultrafilter $\omega$ and consider the asymptotic cone $Y=\cone{X}{p}{D_n}$. By choice of scalars it follows that the ultralimit $E$ of the sequence of flats $F_n$ is contained in $Y$ and lies a distance $1$ away from the point $p$ (when we view $p$ as an element of the cone $Y$).

Define the map $g:Y\rightarrow Y$ by sending $(x_{n}) \in Y$ to $(a_{n}x_{n})$. To check it is well-defined on $Y$ we need only observe that it moves $p$ a bounded distance:

\begin{eqnarray*}
d_{\omega}(p,gp) & = & {\lim}_{\omega}\left(\frac{d_{X}(p,a_{n}p)}{D_{n}}\right) \\
                 & = & {\lim}_{\omega}\left(\frac{d_{n}}{D_{n}}\right) \\
                 & \leq & {\lim}_{\omega}\left(\frac{1}{2\ell_{\xi}}\right) \\
                 & = & \frac{1}{2\ell_{\xi}}\textrm{.}
\end{eqnarray*}

Furthermore, since $a_n$ acts on $X$ by isometries for each $n$ it follows that $g$ acts on $Y$ by isometries.

By assumption $t_{n}/D_{n}$ does not converge to zero, hence $d_{\omega}(\pi(p),g\pi(p))>0$. It implies, since $t_{n}\leq d_{n}$, that $d_{X}(p,gp)>0$. Under these conditions we may apply Proposition \ref{prop:distance to MIN in Euclidean building when regular}, since $g$ acts on $E$ by translating along geodesics towards a boundary point $\xi$. This gives us the following contradiction:
\begin{eqnarray*}
1 = d_{\omega}(p,E)
  \leq \ell_{\xi}d_{\omega}(p,gp)
  \leq\frac{\ell_{\xi}}{2\ell_{\xi}} = \frac{1}{2}\textrm{.}
\end{eqnarray*}

\vspace{2mm}\noindent\textsc{\underline{Case 2:}} $\lim_\omega (t_n/D_n)=0=\lim_\omega (d_n/D_n)$.\vspace{2mm}

\noindent We must therefore assume the $\omega$-limit of $t_{n}/D_{n}$ is zero. We assume this for the second part of the proof and we also assume that the $\omega$-limit of $d_{n}/D_{n}$ is zero. We will build a sequence of quadrilaterals and take their Hausdorff limit. The limiting quadrilateral will be flat and intersecting a flat $F$ only in one edge, along a regular geodesic. This will give the contradiction.

Fix a flat $F$ in $X$ and a point $q \in F$. For each $n$ consider an isometry $g_{n} \in G$ which sends $\pi_{n}(p)$ to $q$ and $F_n$ to $F$. The first thing to note is that each geodesic segment $\left[ \pi_{n}(p),a_{n}\pi_{n}(p)\right]$ is mapped to a geodesic segment $T:=\left[ q , g_{n}a_{n}g_{n}^{-1}q\right]$ in $F$ of $\Dmod$-direction $\xi$. Consider the projection $\tau : \left[ g_{n}p,g_{n}a_{n}p \right] \rightarrow T$. For a fixed constant $h$ and for large enough $n$ we may pick a subsegment $S$ of $T$ of length $h$ such that the pre-image $\tau^{-1}(S)$ in $ \left[ g_{n}p,g_{n}a_{n}p \right]$ has length at most $d_{n}h/t_{n}$. %We can do this by splitting $T$ up into segments of length $h$ (and then a little bit left-over) and using the bijectivity of $\tau$ to conclude that one of these segments satisfies the requirement (if the only segment which works is the one that was left-over re-try, ensuring the segment left-over this time is completely different).

\begin{figure}[b!]
   \labellist
      \small\hair 5pt
         \pinlabel $g_{n}p$ [b] at 14 262
         \pinlabel $g_{n}a_{n}p$ [b] at 327 264
         \pinlabel $q$ [r] at 109 51
         \pinlabel $g_{n}a_{n}g_{n}^{-1}q$ [l] at 251 50
         \pinlabel $F=g_{n}F_{n}$ at 53 20
         \pinlabel $S$ [t] at 169 46
         \pinlabel $D_{n}$ [r] at 79 152
         \pinlabel $L_{n}^{(1)}$ [r] at 136 152
         \pinlabel $L_{n}^{(2)}$ [l] at 182 152
         \pinlabel $b_{n}^{(1)}$ [b] at 110 238
         \pinlabel $b_{n}^{(2)}$ [b] at 176 233
         \pinlabel $h$ [l] at 185 66
         \pinlabel $h$ [r] at 145 65
         \pinlabel $\psi_{n}(h)$ [b] at 167 82
         \endlabellist

         \centering
    \includegraphics[width=12cm]{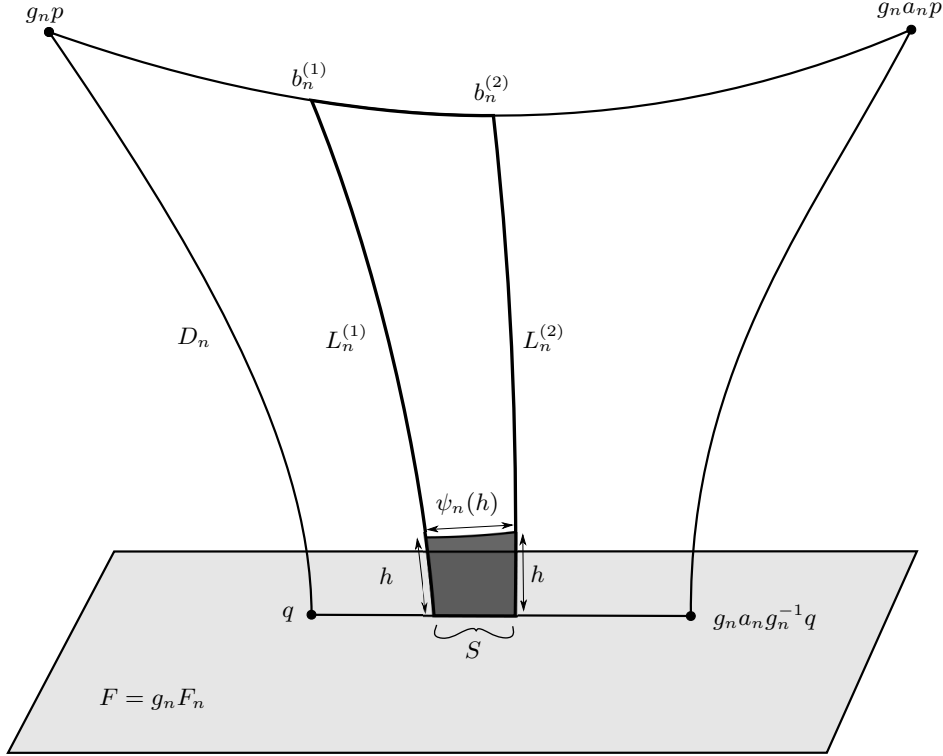}

     \caption{The quadrilateral $Q_n$ with side $S$.}
      \label{fig:RegSemisimpleQuadrilateral} 
  \label{quad1}
\end{figure}

Label points $b_{n}^{(1)},b_{n}^{(2)}$ on the geodesic $[g_{n}p,a_{n}g_{n}p]$ which are mapped under $\tau$ to each end of the segment $S$, see Figure \ref{fig:RegSemisimpleQuadrilateral}. Let $L_{n}^{(i)}:=d_{X}(b_{n}^{(i)},T)$ and without loss of generality assume $L_n^{(1)}\leq L_{n}^{(2)}$. Observe: $$D_{n}=d_{X}(g_{n}p,T)\leq d_{X}(g_{n}p,b_{n}^{(1)})+d_{X}(b_{n}^{(1)},T)$$ and replacing $g_{n}p$ by $g_{n}a_{n}p$ if necessary we get: $$L_{n}^{(1)}\geq D_{n}-\frac{d_{n}}{2}\textrm{.}$$ Using our hypothesis we therefore get $L_{n}^{(1)}>\left(2\ell_{\xi}-\frac{1}{2}\right)d_{n}$. So, referring back to the value of $\ell_{\xi}$ obtained in Proposition \ref{prop:distance to MIN in Euclidean building when regular}, since $2\ell_{\xi}-\frac{1}{2}>0$ for any choice of $\xi$, we get that $L_{n}^{(1)}$ diverges to infinity.

Now define quadrilaterals $Q_{n}$ as follows. We take one edge to be the segment $S$ and the two adjacent edges are those subsegments of $[b_{n}^{(i)},\tau (b_{n}^{(i)})]$ of length $h$ which include the points $\tau (b_{n}^{(i)})$, for $i=1,2$. The quadrilateral $Q_{n}$ has three sides of length $h$. Let $\psi_{n}(h)$ be the length of the fourth side. Define a map $\hat{\psi}_{n}:[0,L_{n}^{(1)}]\rightarrow \mathbb{R}$ measuring the distance across the flat rhombus (see Figure \ref{fig:psi}).
\begin{figure}[b!]
   \labellist
      \small\hair 5pt
         \pinlabel $L_{n}^{(1)}$ [r] at 11 71
         \pinlabel $L_{n}^{(1)}$ [l] at 146 71
         \pinlabel $t$ [r] at 34 28
         \pinlabel $t$ [l] at 123 28
         \pinlabel $\hat{\psi}_{n}(t)$ [b] at 79 55
         \pinlabel $h$ [b] at 79 0
         \pinlabel $\frac{h}{t_{n}}d_{n}$ [t] at 79 140
         \endlabellist
                  \centering
    \includegraphics[width=6cm]{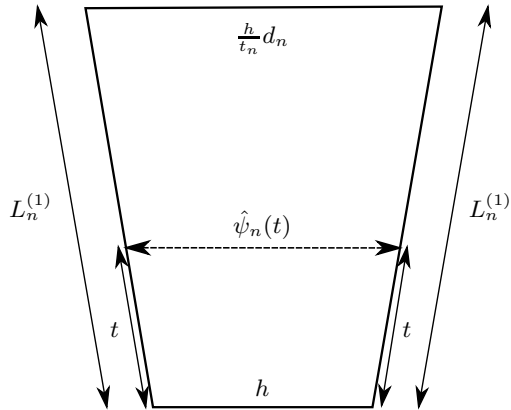}

     \caption{Defining $\hat{\psi}_{n}(t)$.}
      \label{fig:psi}
\end{figure}

Using the $\CAT{0}$ property of $X$ we see that
\begin{eqnarray*}
h \leq \psi_{n}(h) & \leq & \hat{\psi}_{n}(h) \\
                   & = & \left(\frac{L_{n}^{(1)}-h}{L_{n}^{(1)}}\right)h+\frac{h^{2}d_{n}}{L_{n}^{(1)}t_{n}} \\
                   & = & h - \frac{h^{2}}{L_{n}^{(1)}}+h^{2}\left(\frac{d_{n}}{D_{n}t_{n}}\right)\left(\frac{D_{n}}{L_{n}^{(1)}}\right)
\end{eqnarray*}
Above we showed that $L_{n}^{(1)}\geq D_{n}-\frac{d_{n}}{2}$. We can use this to show that $D_{n}/L_{n}^{(1)}$ is bounded above by $\left(1-\frac{1}{4\ell_{\xi}}\right)^{-1}$. This is therefore enough, since we have the assumption that $d_{n}/D_{n}$ converges to zero, to conclude that $\psi_{n}(h)$ converges to $h$ as $n$ tends to infinity. After translating the quadrilaterals $Q_{n}$ along the geodesics to $q$ we can find the Hausdorff limit $Q$ of a convergent subsequence of the quadrilaterals $Q_{n}$. The quadrilateral $Q$ will have four sides with length $h$, two right-angles and hence must be a flat square. But $Q$ intersects $F$ only through the regular geodesic segment $T$. This gives a contradiction.

\vspace{2mm}\noindent\textsc{\underline{Case 3:}} $\lim_\omega(t_n/D_n) = 0 \neq \lim_\omega(d_n/D_n)$.\vspace{2mm}

\noindent We conclude by combining both of the above arguments into one in order to obtain a contradiction when $t_{n}/D_{n}$ converges to zero but $d_{n}/D_{n}$ does not. We start by looking at the situation inside the Euclidean building $Y$ that we built in case 1. It is constructed so that $d_\omega(p,E)=1$, but in what follows we will show that in this case we would have the contradiction $d_\omega(p,E)<1$.

In $Y$, take the two geodesic rays asymptotic to $\xi$ which begin at $p$ and at $gp$ respectively. Note that the second ray is the image of the first under $g$. Also recall that both rays will enter the apartment $E$. Since $g$ fixes $E$ pointwise we see that the two rays must come together at some point $y$. In particular either $y$ is the point where the rays enter $E$ or it is not in $E$. We will show that $d_{\omega}(y,E)\leq (4\ell_{\xi})^{-1}$.

\begin{figure}[h!]
   \labellist
      \small\hair 5pt
         \pinlabel $p$ [r] at 41 140
         \pinlabel $gp$ [b] at 116 148
         \pinlabel $y$ [l] at 96 100
         \pinlabel $e$ [t] at 136 36
         \pinlabel $\xi$ [t] at 319 36
         \pinlabel $E$ at 28 11
         \endlabellist
                  \centering
    \includegraphics[width=10cm]{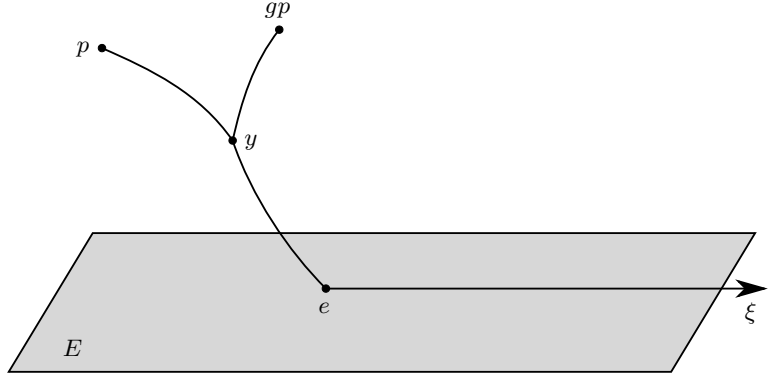}

     \caption[Two asymptotic rays entering an apartment]{Two geodesic rays asymptotic to $\xi$, entering the apartment $E$ at a point $e$ and merging at the point $y$.}
\end{figure}

Suppose that $d_{\omega}(y,E) > (4\ell_{\xi})^{-1}$ and let $(y_{n})$ be a sequence of points in $X$ which represent $y$ in $Y$. Then there exists $\varepsilon$ such that  $(4\ell_{\xi})^{-1}<\varepsilon < d_{\omega}(y,E)$ and $\omega\lbrace n \in \mathbb{N} \mid d_{X}(y_{n},F_{n})\geq \varepsilon D_{n} \rbrace=1$. We now proceed as in case 2, for each $n$ applying the isometry $g_{n}$ and constructing quadrilaterals $Q_{n}$ with one edge in the fixed flat $F$. As before we pick a segment $S$ of length $h$ from $\left[ q,g_{n}a_{n}g_{n}^{-1}q\right]$ whose pre-image under the projection onto $\left[ q,g_{n}a_{n}g_{n}^{-1}q\right]$ intersects $\left[ g_{n}y_{n},g_{n}a_{n}y_{n}\right]$ in a segment of length at least $\frac{h}{t_{n}}d_{X}(y_{n}.a_{n}y_{n})$. Let $L_{n}^{(i)}$ be the distances between the corresponding end-points of these subsegments and suppose $L_{n}^{(1)} \leq L_{n}^{(2)}$. Then in order to proceed as before we need that the function:
$$\hat{\psi}_{n}(t)=\left(\frac{L_{n}^{(1)}-t}{L_{n}^{(1)}}\right)h+\frac{t}{L_{n}^{(1)}}\frac{h}{t_{n}}d_{X}(y_{n},a_{n}y_{n})$$
converges to $h$ when we put $t=h$. We need to check two things: firstly that $L_{n}^{(1)}$ diverges to infinity and secondly that $\frac{d_{X}(y_{n},a_{n}y_{n})}{L_{n}^{(1)}}$ converges to zero. For the former we note that for all but finitely many $n \in \N$ we have the following:
\begin{eqnarray*}
L_{n}^{(1)} & \geq & d_{X}(y_{n},F_{n}) - \frac{1}{2}d_{X}(y_{n},a_{n}y_{n})\\
            & > & \left(2\ell_{\xi}\varepsilon - \frac{1}{2}\right)d_{X}(y_{n},a_{n}y_{n})
\end{eqnarray*}
If $d_{X}(y_{n},a_{n}y_{n})$ is bounded we use the first line to show $L_{n}^{(1)}$ is unbounded. Otherwise we use the second line, recalling that $\varepsilon > \frac{1}{4\ell_{\xi}}$. To prove that $\frac{d_{X}(y_{n},a_{n}y_{n})}{L_{n}^{(1)}}$ converges to zero we first check that $\frac{D_{n}}{L_{n}^{(1)}}$ is bounded. This is so because for all but finitely many $n$ we have the following:
\begin{eqnarray*}
 L_{n}^{(1)} & \geq & d_{X}(y_{n},F_{n}) - \frac{1}{2}d_{X}(y_{n},a_{n}y_{n}) \\
             & > & \varepsilon D_{n} - \frac{1}{2}d_{n} \\
\frac{L_{n}^{(1)}}{D_{n}} & > & \varepsilon - \frac{d_{n}}{2D_{n}}\\
\frac{D_{n}}{L_{n}^{(1)}} & < & \left( \varepsilon - \frac{1}{4\ell_{\xi}}\right)^{-1}
\end{eqnarray*}
Hence we see that $\frac{d_{X}(y_{n},a_{n}y_{n})}{L_{n}^{(1)}} = \frac{d_{X}(y_{n},a_{n}y_{n})}{D_{n}}\frac{D_{n}}{L_{n}^{(1)}}$ converges to zero by our choice of $y_{n}$. Then as before, after translating the quadrilaterals $Q_n$ so they each have a vertex at $q$, we take the Hausdorff limit of a convergent subsequence of these quadrilaterals and obtain a flat quadrilateral which intersects $F$ only through a regular geodesic segment. Here we have our contradiction and conclude that $d_{\omega}(y,E)\leq (4\ell_{\xi})^{-1}$.

To finish the argument we look at the triangle in $Y$ with vertices $p,gp,y$. In a similar manner to the proof of Proposition \ref{prop:distance to MIN in Euclidean building when regular} we use the fact that $Y$ is a $\textrm{CAT}(0)$ space to get $d_{\omega}(p,gp)\geq d_{\omega}(p,y) \ell_{\xi}^{-1}$, recalling that $\ell_{\xi}^{-1}$ is the sine of the minimal angle in the finite set $D(\xi)$ of possible angles between geodesics of $\Dmod$-direction $\xi$.

Therefore we have the following:
\begin{eqnarray*}
 d_{\omega}(p,E) & \leq & d_{\omega}(p,y) + d_{\omega}(y,E) \\
                & \leq & \ell_{\xi}d_{\omega}(p,gp) + \frac{1}{4\ell_{\xi}} \\
                & \leq & \ell_{\xi}\frac{1}{2\ell_{\xi}} + \frac{1}{4\ell_{\xi}} \\
                & = & \frac{1}{2} + \frac{1}{4\ell_{\xi}} < 1
\end{eqnarray*}
However, we also have $d_{\omega}(p,E) = \lim_{\omega}\left(\frac{D_{n}}{D_{n}}\right)=1$, thus giving the contradiction and proving the Lemma.
\end{proof}

\begin{proof}[Proof {[singular slopes]}]
In order to modify the above proof to work for singular directions we need to make the following adjustments. First we replace the flats $F_n$ by $\MIN{a_n}$. Case 1 continues as above with no change. For cases 2 and 3, the contradiction we obtain will be similar. Instead of using a fixed flat $F$, we use a fixed subspace $P(c)$ which consists of a family of geodesics parallel to some geodesic $c$ of slope $\xi$, for example we may take $M=\MIN{a_1}$ and $c$ any geodesic translated by $a_1$. By Lemma \ref{lem:transitive actions on P sigma} we know there exists $g_n \in G$ which sends $\MIN{a_n}$ to $M$. From the proof of Lemma \ref{lem:transitive actions on P sigma} it is also clear that $g_n$ can be chosen so it sends a geodesic translated by $a_n$ to a geodesic translated by $a_1$. Furthermore, if we fix a point $q$ in $M$, as we did in the above proof, then we can choose $g_n$ so it sends $\pi_n(p)$ to $q$. Once we have this, we can find a flat quadrilateral ${Q}$ in the same way as above, but it will intersect $M$ only in one side, which is a geodesic segment of slope $\xi$. The opposite edge of ${Q}$ will be a segment of a geodesic parallel to $c$, hence should be contained in $M$, but it is not.
\end{proof}

In light of Lemma \ref{lemma:distance to MIN in symmetric space} we can put $\ell(a)=\ell(b)=2\ell_\xi$, where $\xi$ is the slope of $a$ and $b$, into Proposition \ref{prop:distance to flat bounded implies conjugator bounded} to get the following:

\begin{thm}\label{thm:bounded conj in G}
Let $\ell_\xi$ and $d_\xi$ be the constants from Lemma \ref{lemma:distance to MIN in symmetric space}. Suppose $a$ and $b$ are conjugate real hyperbolic elements in $G$ with slope $\xi \in \Dmod$ and such that $d_X(p,ap),d_X(p,bp)\geq d_\xi$. Then there exists a conjugator $g \in G$ such that:
$$d_X(p,gp) \leq 2\ell_\xi \big(d_X(p,ap)+d_X(p,bp)\big)\textrm{.}$$
\end{thm}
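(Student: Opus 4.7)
The plan is to assemble the theorem directly from the two main ingredients developed above: Proposition \ref{prop:distance to flat bounded implies conjugator bounded} and Lemma \ref{lemma:distance to MIN in symmetric space}. The unlabelled lemma stated just before Lemma \ref{lemma:distance to MIN in symmetric space} shows that conjugate elements share a common slope, so $b$ has slope $\xi$ too. Because $d_X(p,ap) \geq d_\xi$ and $d_X(p,bp) \geq d_\xi$, Lemma \ref{lemma:distance to MIN in symmetric space} applies to each of $a$ and $b$ with the same pair of constants $\ell_\xi$, $d_\xi$, yielding
$$d_X(p,\MIN{a}) \leq 2\ell_\xi d_X(p,ap), \qquad d_X(p,\MIN{b}) \leq 2\ell_\xi d_X(p,bp).$$

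Next I would feed these estimates into Proposition \ref{prop:distance to flat bounded implies conjugator bounded} with $\ell(a) = \ell(b) = 2\ell_\xi$. Tracing through its proof, the conjugator is produced by first choosing $p_a \in \MIN{a}$ and $p_b \in \MIN{b}$ realising the above bounds, then finding an isometry $g_1 \in G$ sending $\MIN{a}$ to $\MIN{b}$ with $g_1 p_a = p_b$ (such a $g_1$ exists by the transitivity statement of Lemma \ref{lem:transitive actions on P sigma}), and finally correcting by some $x \in G_{p_b}$ via Proposition \ref{lem:conjugator and MIN sets}(2) so that $g := xg_1$ satisfies $gag^{-1} = b$. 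Since $x$ fixes $p_b$ we still have $gp_a = p_b$, and the triangle inequality then gives
$$d_X(p,gp) \leq d_X(p,p_b) + d_X(gp_a, gp) = d_X(p,p_a) + d_X(p,p_b) \leq 2\ell_\xi\bigl(d_X(p,ap) + d_X(p,bp)\bigr),$$
which is exactly the desired estimate.

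No genuine obstacle remains at this stage: the hard geometric content is entirely encapsulated in Lemma \ref{lemma:distance to MIN in symmetric space}, whose proof carries out the delicate asymptotic-cone and quadrilateral-limit argument, while Proposition \ref{prop:distance to flat bounded implies conjugator bounded} already converts a bound on distance-to-$\MIN$ into a bound on the conjugator length. The only point worth checking carefully is bookkeeping, namely that the constants line up so that substituting $\ell(a) = \ell(b) = 2\ell_\xi$ into the conclusion $\ell(a)d_X(p,ap) + \ell(b)d_X(p,bp)$ of Proposition \ref{prop:distance to flat bounded implies conjugator bounded} really does produce the factor $2\ell_\xi$ multiplying the sum $d_X(p,ap) + d_X(p,bp)$ asserted in the theorem.
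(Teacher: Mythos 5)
Your proposal is correct and is essentially the paper's own (one-sentence) proof: the paper simply remarks that substituting $\ell(a)=\ell(b)=2\ell_\xi$ from Lemma \ref{lemma:distance to MIN in symmetric space} into Proposition \ref{prop:distance to flat bounded implies conjugator bounded} yields the theorem. Your retracing of the construction of $g$ inside the proof of Proposition \ref{prop:distance to flat bounded implies conjugator bounded} is accurate and the constant bookkeeping checks out.
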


The constant $2\ell_\xi$ that we have obtained will depend on the slope of $a$ and $b$, and hence on the conjugacy class. However it is important to note that it is independent of the basepoint $p$ that was chosen.

When we restrict our attention to a lattice $\Gamma$ in $G$, Theorem \ref{thm:bounded conj in G} will apply to all but finitely many real hyperbolic elements of $\Gamma$. This leads to the following result:

\begin{cor}\label{cor:bounded conj in G from lattice}
Let $\Gamma$ be a lattice in $G$. Then for each $\xi \in \Dmod$, there exists a constant $L_\xi$ such that two elements $a,b \in \Gamma$ are conjugate in $G$ if and only if there exists a conjugator $g \in G$ such that
		$$d_X(p,gp) \leq L_\xi \big(d_X(p,ap)+d_X(p,bp)\big)\textrm{.}$$
\end{cor}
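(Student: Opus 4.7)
The ``if'' direction is immediate, so I focus on the ``only if'' direction, interpreting the statement as being about conjugate real hyperbolic elements $a,b \in \Gamma$ of slope $\xi$ (the dependence of $L_\xi$ on $\xi$ being otherwise meaningless). The plan is to upgrade Theorem \ref{thm:bounded conj in G} from the regime $d_X(p,ap),d_X(p,bp) \geq d_\xi$ to the whole lattice by using discreteness of $\Gamma$ to control the finitely many elements that violate this hypothesis, then absorbing a bounded additive error into the linear constant.

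The first step is to observe that the set
$$S_\xi = \{\gamma \in \Gamma : \gamma \text{ is real hyperbolic of slope } \xi,\ d_X(p,\gamma p) < d_\xi\}$$
is finite. This follows because $\Gamma p$ is a discrete subset of $X$, so only finitely many of its points lie in the closed ball of radius $d_\xi$ around $p$, and each such point has $\Gamma$-preimage equal to a coset of the finite group $\Gamma \cap K_p$, where $K_p$ is the compact stabilizer of $p$ in $G$. Finiteness of $S_\xi$ allows me to define the two constants
$$M := \max_{\gamma \in S_\xi} d_X(p, \MIN{\gamma}), \qquad \delta := \min_{\gamma \in S_\xi} d_X(p, \gamma p),$$
and, combining Lemma \ref{lemma:distance to MIN in symmetric space} (in the regime $d_X(p,ap) \geq d_\xi$) with the definition of $M$ (in the regime $a \in S_\xi$), to obtain the uniform estimate
$$d_X(p, \MIN{a}) \leq 2\ell_\xi\, d_X(p,ap) + M$$
for every real hyperbolic $a \in \Gamma$ of slope $\xi$, and similarly for $b$.

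Running the construction from the proof of Proposition \ref{prop:distance to flat bounded implies conjugator bounded} with these improved bounds, that is, choosing $p_a \in \MIN{a}$ and $p_b \in \MIN{b}$ realizing the estimates and invoking Lemma \ref{lem:conjugator and MIN sets} together with Lemma \ref{lem:transitive actions on P sigma}, produces a conjugator $g \in G$ satisfying
$$d_X(p,gp) \leq 2\ell_\xi\bigl(d_X(p,ap) + d_X(p,bp)\bigr) + 2M.$$
The additive constant $2M$ is then absorbed using the lower bound $d_X(p,ap) + d_X(p,bp) \geq 2\delta$, yielding the claim with $L_\xi := 2\ell_\xi + M/\delta$. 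The main point that requires a genuine argument, rather than mere assembly of earlier results, is the positivity $\delta > 0$: this rests on the fact that a real hyperbolic isometry has no fixed point in $X$, so no $\gamma \in S_\xi$ lies in $K_p$, forcing $d_X(p,\gamma p) > 0$, and as $S_\xi$ is finite this minimum is attained.
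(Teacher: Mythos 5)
Your proof is correct and is essentially the same argument the paper intends; the paper simply states that Theorem \ref{thm:bounded conj in G} handles all but finitely many real hyperbolic elements of $\Gamma$ (those with $d_X(p,\gamma p)<d_\xi$), and that this finiteness gives the corollary. You have carefully fleshed out what the paper leaves implicit: discreteness of $\Gamma p$ plus finiteness of $\Gamma\cap K_p$ gives finiteness of the exceptional set $S_\xi$; the additive constant $M$ from the exceptional elements is absorbed multiplicatively using the positive lower bound $\delta$ on $d_X(p,\gamma p)$ over $S_\xi$, the positivity of $\delta$ being exactly the observation that real hyperbolic isometries fix no point. The only cosmetic omission is the trivial case $S_\xi=\emptyset$, where one simply takes $L_\xi = 2\ell_\xi$.
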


We now offer a couple of corollaries to Theorem \ref{thm:bounded conj in G} which shed a little more light on the nature of short conjugators between real hyperbolic elements in a semisimple real Lie group. For a semisimple element $a$ of $G$, the translation length of $a$ is 
		$$\tau(a)=\inf \{ d_X(x,ax) \mid x \in X\}.$$ 
We can reformulate Theorem \ref{thm:bounded conj in G} so that it applies to all real hyperbolic elements in $G$, provided their translation length isn't too small.

\begin{cor}
Let $0 <\varepsilon\leq d_\xi$ and suppose that $a$ and $b$ are real hyperbolic elements of $G$ of slope $\xi$ and with translation lengths $\tau(a),\tau(b)\geq\varepsilon$. Then $a$ and $b$ are conjugate in $G$ if and only if there exists a conjugator $g \in G$ such that
		$$d_X(p,gp) \leq {2\ell_\xi}\left(\frac{d_\xi}{\varepsilon}+1\right)\big(d_X(p,ap)+d_X(q,aq)\big).$$
\end{cor}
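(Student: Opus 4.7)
The plan is to reduce the statement to Theorem~\ref{thm:bounded conj in G} by passing to suitable powers. Set $n := \lceil d_\xi/\varepsilon \rceil$, so that $n \leq d_\xi/\varepsilon + 1$ and $\tau(a^n) = n\tau(a) \geq n\varepsilon \geq d_\xi$. Hence $d_X(p, a^n p) \geq \tau(a^n) \geq d_\xi$, and likewise for $b^n$. The strategy is to apply Lemma~\ref{lemma:distance to MIN in symmetric space} to the powers $a^n, b^n$ to control $d_X(p, \MIN{a^n})$ and $d_X(p, \MIN{b^n})$, convert these into bounds on $d_X(p, \MIN{a})$ and $d_X(p, \MIN{b})$, and feed the result into Proposition~\ref{prop:distance to flat bounded implies conjugator bounded}.

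The key geometric input is the identity $\MIN{a^n} = \MIN{a}$ for every real hyperbolic $a$, since this is what transfers a distance-to-flat estimate for $a^n$ into one for $a$ itself. I would prove it in two inclusions. The inclusion $\MIN{a} \subseteq \MIN{a^n}$ is immediate: $a$ translates every geodesic in $\MIN{a} = P(c)$ by $\tau(a)$, so $a^n$ translates each such geodesic by $n\tau(a) = \tau(a^n)$. For the reverse, given $z \in \MIN{a^n}$ I would project $z$ onto the closed convex set $\MIN{a}$, exploit the fact that this projection commutes with $a^n$ (which preserves $\MIN{a}$) and is $1$-Lipschitz in any CAT(0) space, and thereby deduce the rigidity statement that the bi-infinite geodesic through $z$ and $a^n z$ is parallel to $c$, hence contained in $P(c) = \MIN{a}$.

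With this identity established, Lemma~\ref{lemma:distance to MIN in symmetric space} applied to $a^n$, together with the triangle inequality $d_X(p, a^n p) \leq n\, d_X(p, ap)$, gives
$$d_X(p, \MIN{a}) = d_X(p, \MIN{a^n}) \leq 2\ell_\xi n\, d_X(p, ap) \leq 2\ell_\xi \Bigl(\frac{d_\xi}{\varepsilon} + 1\Bigr) d_X(p, ap),$$
and the analogous inequality for $b$. Proposition~\ref{prop:distance to flat bounded implies conjugator bounded} with $\ell(a) = \ell(b) = 2\ell_\xi(d_\xi/\varepsilon + 1)$ then produces the desired conjugator, and the reverse direction is immediate. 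The main obstacle I expect is the identity $\MIN{a^n} = \MIN{a}$: it can fail for general axial isometries of CAT(0) spaces (e.g.\ a Euclidean glide reflection, whose square is a pure translation with strictly larger $\MIN$ set), and the real hyperbolic hypothesis — requiring $a$ to translate \emph{every} geodesic parallel to its axis — is precisely what excludes this pathology.
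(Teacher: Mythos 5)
Your proof is correct and takes essentially the same route as the paper's: the paper chooses $k$ maximal with $k\varepsilon < d_\xi$ (so $k+1 = \lceil d_\xi/\varepsilon\rceil = n$), applies Lemma~\ref{lemma:distance to MIN in symmetric space} to $a^{k+1}$ together with $d_X(p,a^{k+1}p)\le (k+1)\,d_X(p,ap)$, and then invokes Proposition~\ref{prop:distance to flat bounded implies conjugator bounded}, exactly as you do. The one step you treat more carefully is the identity $\MIN{a^n}=\MIN{a}$, which the paper uses silently; your observation that the real hyperbolic hypothesis (translating \emph{every} geodesic parallel to the axis) is precisely what rules out the glide-reflection pathology is correct and fills a genuine small gap in the exposition.
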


\begin{proof}
Since $a$ is real hyperbolic, $\tau(a)>0$ and $\tau(a^k)=k\tau(a)$ for all $k \in \N$. Let $k$ to be the maximal positive integer such that $k\varepsilon < d_\xi$. In particular, maximality of $k$ implies that $d_X(p,a^{k+1}p),d_X(p,b^{k+1}p)\geq d_\xi$ and we are able to apply Lemma \ref{lemma:distance to MIN in symmetric space}, concluding that 
		$$d_X(p,\MIN{a})\leq 2\ell_\xi (k+1)d_X(p,ap), \ \ d_X(p,\MIN{b}) \leq 2 \ell_\xi (k+1) d_X(p,bp).$$
We can then apply Lemma \ref{prop:distance to flat bounded implies conjugator bounded}, taking $\ell(a)=\ell(b)=2\ell_\xi(\frac{d_\xi}{\varepsilon}+1)$. This gives the upper bound on the length of a conjugator $g$ as required.
\end{proof}

We complete this section with the following consequence of the above work. It says that if we restrict ourselves to looking at the majority of regular semisimple elements --- that is, those with not too small translation length and of a slope which is not too close to being singular --- then we can obtain a linear bound on the length of short conjugators.

\begin{cor}
For every $\varepsilon_1,\varepsilon_2 >0$ there exists $\kappa=\kappa(\varepsilon_1,\varepsilon_2)$ with the following property: assume that $a$ and $b$ are conjugate hyperbolic elements with translation lengths $\tau(a),\tau(b)\geq \varepsilon_1$ and slope $\xi \in \Dmod$ such that the spherical distance from $\xi$ to $\partial \Dmod$ is at least $\varepsilon_2$. Then there exists a conjugator  $g \in G$ such that:
		$$d_X(p,gp) \leq \kappa \big(d_X(p,ap)+d_X(p,bp)\big)\textrm{.}$$
\end{cor}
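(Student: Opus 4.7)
The plan is to combine the preceding corollary with uniform bounds on the constants $\ell_\xi$ and $d_\xi$ from Proposition \ref{prop:distance to MIN in Euclidean building when regular} and Lemma \ref{lemma:distance to MIN in symmetric space} as $\xi$ ranges over the compact set $K_{\varepsilon_2} := \{\xi \in \Dmod : \mathrm{dist}_{\mathrm{sph}}(\xi, \partial \Dmod) \geq \varepsilon_2\}$. The hypothesis places both slopes of $a$ and $b$ in $K_{\varepsilon_2}$, and the previous corollary already delivers the bound
$$d_X(p,gp) \leq 2\ell_\xi \left(\frac{d_\xi}{\varepsilon_1}+1\right)\big(d_X(p,ap)+d_X(p,bp)\big),$$
so everything reduces to controlling $\ell_\xi$ and $d_\xi$ uniformly over $K_{\varepsilon_2}$.

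The first step is to show that $M_1 := \sup_{\xi \in K_{\varepsilon_2}} \ell_\xi$ and $M_2 := \sup_{\xi \in K_{\varepsilon_2}} d_\xi$ are both finite. Recall that $\ell_\xi = 1/\sin\phi(\xi)$, where $\phi(\xi)$ is the minimum of the finite set $D(\xi)$ of possible angles, inside the asymptotic-cone Euclidean building, at a vertex $ae$ between an approaching ray and its image under translation along a geodesic of slope $\xi$ (see Figure \ref{fig:EuclideanBuildingRays}). By the rigidity of angles in the spherical building at infinity, the set $D(\xi)$ is combinatorial data determined by the Weyl group together with the position of $\xi$ in $\Dmod$, and an angle in $D(\xi)$ can degenerate to $0$ only when $\xi$ reaches a wall, i.e.\ a face of $\partial \Dmod$. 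Hence $\phi$ is lower semicontinuous on the interior of $\Dmod$ and bounded below by some $\phi_0 > 0$ on the compact set $K_{\varepsilon_2}$, giving $M_1 \leq 1/\sin\phi_0 < \infty$. The finiteness of $M_2$ follows from inspecting the three cases of the proof of Lemma \ref{lemma:distance to MIN in symmetric space}, where the threshold $d_\xi$ is controlled in terms of $\ell_\xi$ and Euclidean-building data that vary continuously in $\xi$ so long as $\xi$ stays inside the interior of $\Dmod$.

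The second step is the assembly: with $M_1, M_2$ in hand, set
$$\kappa = \kappa(\varepsilon_1,\varepsilon_2) := 2M_1\left(\frac{M_2}{\varepsilon_1}+1\right)\textrm{.}$$
The translation length hypothesis $\tau(a),\tau(b)\geq \varepsilon_1$ together with the slope hypothesis $\xi \in K_{\varepsilon_2}$ place us inside the regime of the previous corollary, and substituting the uniform bounds gives exactly the inequality $d_X(p,gp) \leq \kappa(d_X(p,ap)+d_X(p,bp))$.

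The main obstacle is the first step, specifically ruling out that a sequence of regular slopes $\xi_n \in K_{\varepsilon_2}$ could produce angles in $D(\xi_n)$ tending to zero despite staying a fixed spherical distance from $\partial \Dmod$. The content one needs is that the minimum building-angle function $\xi \mapsto \phi(\xi)$ is lower semicontinuous on the interior of $\Dmod$ and vanishes exactly on $\partial \Dmod$; this is where a careful look at the spherical-building-theoretic origin of $D(\xi)$, already invoked in Proposition \ref{prop:distance to MIN in Euclidean building when regular}, does the real work.
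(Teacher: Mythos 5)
Your overall strategy is the right one: the corollary should follow by showing that the constants $\ell_\xi$ and $d_\xi$ are uniformly bounded as $\xi$ ranges over the compact set $K_{\varepsilon_2} = \{\xi \in \Dmod : \mathrm{dist}_{\mathrm{sph}}(\xi,\partial\Dmod) \geq \varepsilon_2\}$, and then plugging into the preceding corollary. Your treatment of $M_1 = \sup_{K_{\varepsilon_2}}\ell_\xi$ is essentially sound: the angles in $D(\xi)$ are determined by the position of $\xi$ in its chamber via the Weyl group, they degenerate only as $\xi$ approaches a wall, and they vary continuously, so the minimum angle $\phi(\xi)$ is bounded away from $0$ on $K_{\varepsilon_2}$. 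You have identified this as ``the main obstacle,'' but in fact this step is fine.

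The real gap is in your treatment of $M_2 = \sup_{K_{\varepsilon_2}} d_\xi$. You assert that $d_\xi$ is ``controlled in terms of $\ell_\xi$ and Euclidean-building data that vary continuously in $\xi$.'' This is not justified, and in fact the paper explicitly disclaims any such control: at the end of Section 3 it says, ``while we have a precise expression for the constant $\ell_\xi$ in terms of the slope $\xi$, we have no grasp on the value taken by $d_\xi$.'' The proof of Lemma \ref{lemma:distance to MIN in symmetric space} is a pure contradiction argument: it shows that for each fixed slope $\xi$ the set of violating elements is bounded, without producing any explicit bound, let alone one depending continuously on $\xi$. Pointwise finiteness of $d_\xi$ together with a vague appeal to continuity does not give you $\sup_K d_\xi < \infty$, and you cannot read such continuity off the proof.

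The correct way to obtain the uniform threshold is to re-run the contradiction argument of Lemma \ref{lemma:distance to MIN in symmetric space} from scratch, but with the slope allowed to vary within $K_{\varepsilon_2}$: suppose no uniform $d_{K_{\varepsilon_2}}$ works with the constant $2M_1$. Then there are real hyperbolic elements $a_n$ with slopes $\xi_n \in K_{\varepsilon_2}$, $d_X(p,a_np)\to\infty$, and $d_X(p,\MIN{a_n}) > 2M_1\,d_X(p,a_np)$. By compactness of $K_{\varepsilon_2}$ the slopes $\xi_n$ ultraconverge to some $\xi_\omega \in K_{\varepsilon_2}$, so $\ell_{\xi_\omega} \leq M_1$, and then Cases 1--3 of the original proof go through verbatim, using the ultralimit $E$ of $\MIN{a_n}$ and $\ell_{\xi_\omega}$ in place of the fixed $\ell_\xi$. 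The final inequality $1 = d_\omega(p,E) \leq \ell_{\xi_\omega}/(2M_1) + 1/(4M_1) < 1$ gives the contradiction. That re-running, not an inspection of the fixed-slope statement, is what delivers $M_2$.
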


It is worth noting that while we have a precise expression for the constant $\ell_\xi$ in terms of the slope $\xi$, we have no grasp on the value taken by $d_\xi$.

\section{Dependence on the slope}\label{sec:semisimple:no uniform bound}

The aim here is to show that any constant satisfying the linear relationship in Theorem \ref{thm:bounded conj in G} must depend on the common slope of $a$ and $b$. To do this, we first show there is not a uniform constant $\ell > 0$ such that for all regular semisimple elements $a \in G$ the following holds:
\begin{equation}\label{eq:contradiction}
d_{X}(p,\MIN{a})\leq \ell d_{X}(p,ap)
\end{equation}
where $p$ is an arbitrary basepoint in $X$. This will imply that the constants $\ell(a)$ required for Proposition \ref{prop:distance to flat bounded implies conjugator bounded} will have to depend somehow on $a$.

To do this, we will construct a sequence of regular semisimple elements which contradict the existence of such an $\ell$. The sequence will converge to a singular semisimple element, agreeing with the intuition that the constant $\ell_{\xi}$ of Lemma \ref{lemma:distance to MIN in symmetric space} diverges to infinity as the slope $\xi$ converges to a singular direction.

We first note that if (\ref{eq:contradiction}) is true for a point $p \in X$ then it is true for every point $q \in X$. Indeed let $g \in G$ be any isometry such that $gp=q$. Then firstly:
\begin{eqnarray*}
d_{X}(q,\MIN{a}) & = & d_{X}(gp,\MIN{a})  	\\
				 & = & d_{X}(p,g^{-1}\MIN{a})	\\
				 & = & d_{X}(p,\MIN{g^{-1}ag})
\end{eqnarray*}
and secondly:
\begin{eqnarray*}
d_{X}(q,aq) & = & d_{X}(gp,agp) \\
			& = & d_{X}(p,g^{-1}agp)\textrm{.}
\end{eqnarray*}
But since we assume (\ref{eq:contradiction}) to be true for all hyperbolic elements it follows that it is true for $g^{-1}ag$ and thus:
$$
d_{X}(q,\MIN{a})\leq \ell d_{X}(q,aq)\textrm{.}
$$

Fix a pair of distinct flats $F,F'$ whose intersection is non-trivial and of dimension at least one. Let $p_0$ be a point in their intersection and let $\Lie{g=k\oplus p}$ be the corresponding Cartan decomposition. Let $\Lie{a,a'}$ be the maximal abelian subspaces of $\Lie{p}$ such that:
$$F=\exp(\Lie{a})p_0$$
$$F'=\exp(\Lie{a'})p_0$$
Furthermore suppose $H \in \Lie{a \cap a'}$ has length $1$ and $Y \in \Lie{a' \setminus a}$ is orthogonal to $H$ and is also of length $1$.

Let $(H_n)$ be a sequence of regular unit vectors in $\Lie{a \setminus a'}$ which converge to $H$. Define $a_0 :=\exp(H) \in G$ and $a_n := \exp(H_n) \in G$ for $n \in \N$ (see Figure \ref{fig:no uniform bound}).

\begin{figure}[h!]
\labellist
	\small\hair 5pt
		\pinlabel $p_0$ [t] at 84 74
		\pinlabel $q$ [r] at 73 177
		\pinlabel $a_0q$ [l] at 195 177
		\pinlabel $H$ [l] at 206 86
		\pinlabel $H_n$ [l] at 198 61
		\pinlabel $F$ at 302 109
		\pinlabel $F'$ at 260 210
\endlabellist

\centering
\includegraphics[width=11cm]{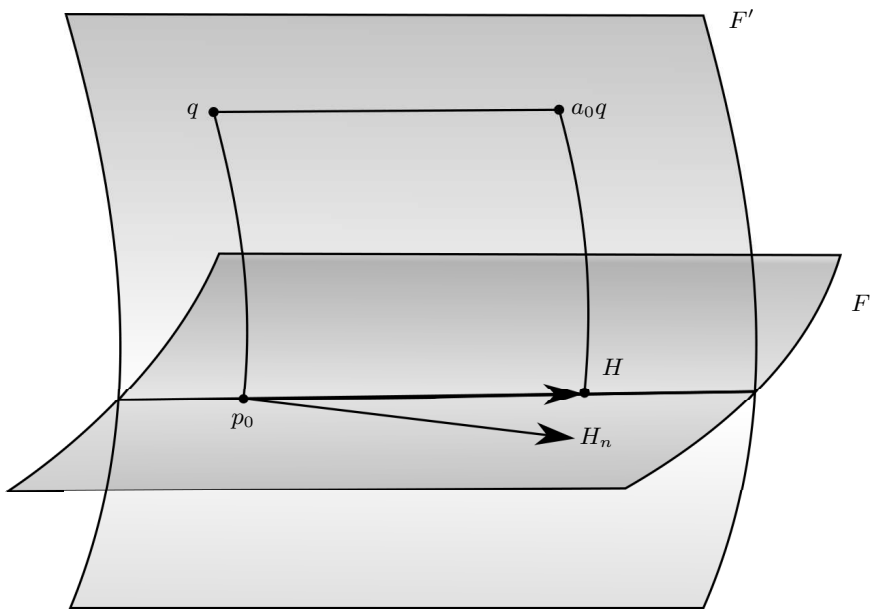}
\caption[Two intersecting flats]{The two flats $F$ and $F'$.}
\label{fig:no uniform bound}
\end{figure}

We suppose (\ref{eq:contradiction}) holds for some $\ell > 0$. Let $\varphi$ be the angle between $Y$ and the flat $F$ and set $q=\exp(\frac{2\ell}{\sin \varphi} Y)p_0$. Then 
$$
d_{X}(q,F)\geq \sin \varphi d_{X}(q,p_0)=2\ell\textrm{.}
$$
But for each $n \in \N$, by construction, $F=\MIN{a_n}$. Thus $d_{X}(q,\MIN{a_n})\geq 2\ell$ for each $n$. Meanwhile the sequence of points $a_n q$ converges to the point $a_0 q$ as $n$ tends to infinity. Hence $d_{X}(q,a_n q)$ converges to $1$. This gives the following contradiction:
\begin{eqnarray*}
2\ell & \leq & d_{X}(q,\MIN{a_n}) \\
	  & \leq & \ell d_{X}(q,a_{n}q) \\
	  & \rightarrow & \ell
\end{eqnarray*}
Hence there cannot exist such a constant $\ell$ which satisfies (\ref{eq:contradiction}) for every hyperbolic element in $G$.

The following Lemma explains how to use the above to demonstrate the non-existence of a uniform constant for the linear control on conjugacy length among all regular semisimple elements in $G$.

\begin{lemma}
Suppose for $p \in X$ there exists $\ell^\prime>0$ such that for every pair of conjugate regular semisimple elements $a,b$ in $G$ there exists a conjugator $g \in G$ such that
		$$d_X(p,gp) \leq \ell^\prime(d_X(p,ap)+d_X(p,bp))\textrm{.}$$
Then (\ref{eq:contradiction}) holds for $\ell = 2\ell^\prime$.
\end{lemma}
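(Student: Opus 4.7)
The plan is to feed the hypothesis a carefully chosen conjugate pair, and then extract from the resulting conjugator an element of $G$ that takes $p$ close to $\MIN{a}$.

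Start with any regular semisimple $a \in G$ and let $q$ denote the orthogonal projection of $p$ onto $\MIN{a}$; by definition $d_X(p,\MIN{a}) = d_X(p,q)$, and since $q \in \MIN{a}$ we have $d_X(q,aq) = \tau(a) \leq d_X(p,ap)$. Since $G$ acts transitively on $X$, choose $k \in G$ with $kq = p$ and set $b := kak^{-1}$. Then $b$ is regular semisimple conjugate to $a$; moreover $\MIN{b} = k\MIN{a}$ contains $p$, and
\[
 d_X(p,bp) = d_X(kq, kak^{-1}kq) = d_X(q,aq) = \tau(a) \leq d_X(p,ap).
\]

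Now apply the hypothesis to the conjugate pair $(a,b)$: there exists $g \in G$ conjugating $a$ to $b$ with
\[
 d_X(p,gp) \leq \ell'\bigl(d_X(p,ap) + d_X(p,bp)\bigr) \leq 2\ell' \, d_X(p,ap).
\]
By Proposition \ref{lem:conjugator and MIN sets}\ref{item:1:prop:conjugator and MIN sets} we have $g\MIN{a} = \MIN{b}$, and since $p \in \MIN{b}$, the point $g^{-1}p$ lies in $\MIN{a}$. Therefore
\[
 d_X(p,\MIN{a}) \leq d_X(p, g^{-1}p) = d_X(gp, p) \leq 2\ell' \, d_X(p,ap),
\]
which is precisely equation (\ref{eq:contradiction}) with $\ell = 2\ell'$.

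There is no real obstacle here: the whole argument is a one-line application of Proposition \ref{lem:conjugator and MIN sets}. The only small point to verify is that the translation $k$ mapping $q$ to $p$ exists in $G$, which follows from the transitivity of $G$ on $X = G/K$; and the conjugate $b$ produced has $p$ lying on $\MIN{b}$ precisely so that $g^{-1}p \in \MIN{a}$ provides a witness realising a short distance from $p$ to $\MIN{a}$.
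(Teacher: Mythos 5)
Your proof is correct and takes essentially the same approach as the paper: construct a conjugate $b$ of $a$ whose $\MIN$ set contains $p$, apply the hypothesis to the pair $(a,b)$, and observe via Proposition \ref{lem:conjugator and MIN sets}\ref{item:1:prop:conjugator and MIN sets} that the resulting conjugator moves $p$ into $\MIN{a}$. The only difference is cosmetic: the paper builds the second flat by applying the geodesic symmetry $s_m$ about the midpoint of $[p,\pi_a(p)]$ before choosing a $g\in G$ carrying $p$ to $\pi_a(p)$, whereas you go directly via transitivity of $G$ on $X$, which is a cleaner route to the same $b$ and the same estimate.
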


\begin{proof}
Let $a$ be regular semisimple in $G$ such that $\MIN{a}=F_a$ does not contain $p$. Let $\pi_a$ be the orthogonal projection of $X$ onto $F_a$ and let $m$ be the midpoint of the geodesic segment $[p,\pi_a(p)]$. Consider the geodesic symmetry $s_m$ of $X$ about $m$, that is the map $s_m:X\to X$ such that for any geodesic $c:\R \to X$ with $c(0)=m$, $s_m(c(t))=c(-t)$ for all $t \in \R$. Since $X$ is a symmetric space $s_m$ is an isometry. Let $F$ denote the flat $s_m(F_a)$. Then $p \in F$ and $[p,\pi_a(p)]$ meets both flats at right-angles, so this geodesic segment realises the distance between the flats.

Take $g \in G$ such that $gF=F_a$ and $gp=\pi_a(p)$. Then by the hypothesis of the Lemma, $d_X(p,\pi_a(p))\leq \ell^\prime(d_X(p,ap)+d_X(p,g^{-1}ag p))$. But $d_X(p,g^{-1}agp)$ is the translation length of $a$, so is less than $d_X(p,ap)$. Hence $d_X(p,F_a) \leq 2\ell^\prime d_X(p,ap)$.
\end{proof}

Since (\ref{eq:contradiction}) cannot hold, we have the following:

\begin{cor}
For $p \in X$ there does not exists $\ell^\prime>0$ such that for every pair of conjugate regular semisimple elements $a,b$ in $G$ there exists a conjugator $g \in G$ such that
		$$d_X(p,gp) \leq \ell^\prime(d_X(p,ap)+d_X(p,bp))\textrm{.}$$
\end{cor}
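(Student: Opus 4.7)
The plan is to argue by contradiction, reducing the statement to the non-existence result that has just been established for the inequality (\ref{eq:contradiction}). Suppose, for contradiction, that such a constant $\ell^\prime > 0$ did exist at the basepoint $p$. Then the hypothesis of the preceding Lemma would be satisfied, and the Lemma would imply that (\ref{eq:contradiction}) holds with $\ell = 2\ell^\prime$. But the body of this section has just constructed an explicit sequence of regular semisimple elements $a_n = \exp(H_n)$ violating (\ref{eq:contradiction}) for any prescribed $\ell$, so this yields the desired contradiction.

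For clarity I would briefly recall the mechanism of the previous Lemma when applying it: given a regular semisimple $a$ with $p \notin \MIN{a}=F_a$, the flat $F = s_m(F_a)$ (reflected about the midpoint $m$ of $[p,\pi_a(p)]$) contains $p$, and the segment $[p,\pi_a(p)]$ realises the distance between $F$ and $F_a$ and meets both orthogonally. Choosing $g \in G$ with $gF = F_a$ and $gp = \pi_a(p)$, the element $g^{-1}ag$ is regular semisimple and conjugate to $a$, with $d_X(p,g^{-1}agp)=\tau(a)\leq d_X(p,ap)$. Applying the assumed conjugator bound to the pair $(a,g^{-1}ag)$ then gives $d_X(p,F_a) \leq 2\ell^\prime d_X(p,ap)$, i.e.\ precisely (\ref{eq:contradiction}) with $\ell = 2\ell^\prime$. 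This is exactly the implication already recorded as the previous Lemma, so in the actual write-up one simply quotes it.

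The only mild subtlety is noting that the basepoint $p$ in the statement is arbitrary but fixed, whereas the earlier invalidation of (\ref{eq:contradiction}) required the freedom to move the basepoint. This is dealt with by the observation made at the start of the section: the validity of (\ref{eq:contradiction}) for a single basepoint $p$ is equivalent to its validity for every basepoint $q \in X$, since for any $g \in G$ with $gp=q$ one has $d_X(q,\MIN{a}) = d_X(p,\MIN{g^{-1}ag})$ and $d_X(q,aq)=d_X(p,g^{-1}agp)$, and the assumed estimate is invariant under replacing $a$ by a conjugate. The same translation invariance applies to the putative bound on conjugators, so there is no loss in restricting to a single basepoint $p$.

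There is no hard step here: the argument is essentially the contrapositive of the preceding Lemma, combined with the explicit counterexample already built in this section. The proof is thus one paragraph, and the only thing to watch is to phrase it so the reader sees that ``$\ell^\prime$ for conjugators'' implies ``$\ell=2\ell^\prime$ for distance to $\MIN{a}$'', which has just been ruled out.
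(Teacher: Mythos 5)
Your argument is exactly the paper's: the corollary follows immediately by contraposing the preceding Lemma against the non-existence of a uniform $\ell$ for (\ref{eq:contradiction}), which the section has just established. The paper states this in a single line, so your expanded version is correct and matches the intended proof.
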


\section{Looking for a short conjugator in $\Gamma$}\label{sec:semisimple:bounded conj in Gamma}

In Corollary \ref{cor:bounded conj in G from lattice} we found a short conjugator between two real hyperbolic elements in $\Gamma$. However this conjugator lies in the ambient Lie group $G$. In order to improve our understanding of conjugacy length in $\Gamma$ we need to work out how to move our conjugator from $G$ so that it becomes a conjugator in $\Gamma$. The main obstacle here is in understanding how the lattice will intersect flats in the symmetric space. 

Given a conjugator $g\in G$ for $a,b \in \Gamma$, the set of all conjugators is the coset $Z_G(a)g$ of the centraliser of $a$. We are therefore interested in the contents of the set $Z_G(a)g \cap \Gamma$, or equivalently $gZ_G(b)\cap \Gamma$. If we begin with the assumption that a conjugator for $a,b$ from the lattice exists, then at least we know these sets are non-empty.

When $a$ and $b$ are real hyperbolic we have a good understanding of the geometry of their centralisers (see Lemmas \ref{lemma:regular centraliser and flat stabiliser} and \ref{lemma:singular centraliser and MIN}). For example, when $b$ is regular we can find a point $q \in X$ such that the orbit $Z_G(b)q$ is a maximal flat. Then $gZ_G(b)q$ is also a maximal flat and is in fact the unique maximal flat stabilised by $a$. Hence it is equal to $Z_G(a)gq$. Clearly $(Z_G(a)g \cap \Gamma)q$ is contained in this flat, and the question is how far is $gq$ from this subset? Once we know this distance, we can shift our conjugator $g$, whose length we have an estimate for courtesy of Section \ref{sec:semisimple:bounding in G}, to a conjugator in $\Gamma$ and keep track of the size of the new lattice conjugator.

Suppose $\gamma \in gZ_G(b)\cap \Gamma$. Then $gZ_G(b)\cap \Gamma = \gamma(Z_G(b)\cap \Gamma)$. It is therefore enough to look at how the set $(Z_G(b)\cap \Gamma)q$ sits inside $Z_G(b)q$. In the singular case $Z_G(b)q$ will be made up of a family of maximal flats. Each flat will be the orbit of a maximal torus $T$ contained in $Z_G(b)$. If there exists some such torus $T$ which satisfies the properties that $b \in T \cap \Gamma$ and $T \cap \Gamma$ is isomorphic to $\Z$, then we understand what the fundamental domain for the action of $T\cap \Gamma$ on $Tq$ will look like: it will be an $R$--tubular neighbourhood of a hyperplane orthogonal to the geodesics translated by $b$, where $2R \leq d_X(q,bq)$.

This situation cannot arise if the $\Q$--rank of the lattice is too small: it must satisfy $\qrank(\Gamma) \geq \rrank(G)-1$. If $\qrank(\Gamma)=\rrank(G)-1$ then a maximal $\Q$--split torus $S$ is a hyperplane inside a maximal $\R$--split torus $T$. Because $\Gamma$ must intersect $S$ in a finite set, $\Gamma$ will intersect $T$ in nothing more than a finite extension of $\Z$. In particular, if $q \in X$ is chosen so that $Tq$ is flat, then $(\Gamma \cap T)q$ will look like a copy of $\Z$ inside the flat. Furthermore, the fundamental domain for the action of $T\cap \Gamma$ on this flat will be a tubular neighbourhood of $Sq$.

In general, if $\qrank(\Gamma)=\rrank(G)-d$, then the flats in $X$ which have a non-trivial intersection with an orbit of $\Gamma$ can do so only with copies of $\Z^k$ for $\rrank(G)\geq k \geq d$.

\begin{lemma}
Suppose $\qrank(\Gamma) \geq \rrank(G)-1$ and let $T$ be a maximal $\R$--split torus in $G$ such that $T \cap \Gamma$ is a finite extension of $\Z$. Take a real hyperbolic element $b \in T\cap\Gamma$ and suppose it is conjugate in $\Gamma$ to $a$. If $a$ and $b$ have slope $\xi$, then there exists a conjugator $\gamma \in \Gamma$ for $a,b$ such that
		$$d_X(p,\gamma p) \leq (6L_\xi+1) (d_X(p,ap)+d_X(p,bp))$$
where $L_\xi$ is as in Corollary \ref{cor:bounded conj in G from lattice}.
\end{lemma}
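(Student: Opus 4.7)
The plan is to take the short conjugator $g \in G$ supplied by Corollary~\ref{cor:bounded conj in G from lattice}, which satisfies $d_X(p,gp) \leq L_\xi D$ with $D := d_X(p,ap)+d_X(p,bp)$, and to adjust it by left-multiplying with a well-chosen element $\tau \in T \cap \Gamma$ to land in $\Gamma$. By hypothesis some lattice conjugator exists, so fix any $\gamma_0 \in \Gamma$ with $\gamma_0 a \gamma_0^{-1} = b$. Every other lattice conjugator has the form $\tau\gamma_0$ for some $\tau \in Z_G(b) \cap \Gamma$, and since $T \cap \Gamma$ has finite index in $Z_G(b) \cap \Gamma$ in the setups considered here, the problem reduces to choosing $\tau \in T \cap \Gamma$ so that $\tau \gamma_0 p$ lies within the claimed distance of $p$.

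I would project everything onto $\MIN{b}$. Set $q := \pi_b(p)$ and $p_a := \pi_a(p)$; Lemma~\ref{lemma:distance to MIN in symmetric space} (with $L_\xi$ absorbing the exceptional cases of small translation length) yields $d_X(p,q), d_X(p,p_a) \leq L_\xi D$. Proposition~\ref{lem:conjugator and MIN sets} ensures that $g$ isometrically maps $\MIN{a}$ onto $\MIN{b}$, so $gp_a \in \MIN{b}$ and, by the triangle inequality through $p$ and $gp$, $d_X(q, gp_a) \leq 3L_\xi D$. Since each $\tau \in T$ stabilises $\MIN{b}$ while $\gamma_0$ sends $\MIN{a}$ onto $\MIN{b}$, we have $\pi_b(\tau\gamma_0 p) = \tau\gamma_0 p_a$. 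Splitting the triangle inequality through $gp$, $gp_a$, and $\tau\gamma_0 p_a$ gives
$$d_X(p, \tau\gamma_0 p) \,\leq\, d_X(p,gp) + d_X(gp, gp_a) + d_X(gp_a, \tau\gamma_0 p_a) + d_X(\tau\gamma_0 p_a, \tau\gamma_0 p),$$
where the first, second, and fourth summands each contribute at most $L_\xi D$.

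The choice of $\tau$ will be dictated by the orbit $(T \cap \Gamma)\gamma_0 p_a$ in the flat $Tq = \MIN{b}$. Under the structural hypothesis that $T \cap \Gamma$ is a finite extension of $\Z$, this orbit is discrete and lies on a single line through $\gamma_0 p_a$ parallel to the axis of $b$, with consecutive points at distance at most $d_X(q,bq) \leq D$. I would pick $\tau$ so that $\tau\gamma_0 p_a$ is the closest orbit point to $gp_a$ along the axis of $b$, which makes the axis component of the displacement at most $D/2$.

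The hard part of the plan is controlling the transverse component of $\tau\gamma_0 p_a - gp_a$ perpendicular to the axis inside $\MIN{b}$, since $T \cap \Gamma$ acts purely along the axis and so leaves this perpendicular component invariant under the choice of $\tau$. The hypothesis $\qrank(\Gamma) \geq \rrank(G) - 1$ together with the virtually cyclic structure of $T \cap \Gamma$ is what forces the lattice orbit line to pass within a controlled distance of $q$, which combined with $d_X(q, gp_a) \leq 3L_\xi D$ should bound the transverse displacement by $3L_\xi D$. Feeding these contributions into the inequality above produces $d_X(p,\tau\gamma_0 p) \leq (6L_\xi + 1)D$, as claimed.
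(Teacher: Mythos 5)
Your argument is incomplete at exactly the point that carries all the difficulty. You correctly reduce the problem to bounding the component of the displacement between $gp_a$ and the orbit $(T\cap\Gamma)\gamma_0 p_a$ that is transverse to the axis of $b$, and you correctly observe that the choice of $\tau\in T\cap\Gamma$ cannot shrink this component. But you then only assert that the hypotheses ``should'' force the orbit line to pass within a controlled distance of $q$. Nothing in your write-up substantiates this: the line through $\gamma_0 p_a$ depends on an arbitrary choice of lattice conjugator $\gamma_0$, which has no a priori relationship to $q = \pi_b(p)$, so the transverse distance is not obviously bounded by anything.

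The paper supplies the missing ingredient via a different mechanism: it invokes the fact (cited from Leuzinger) that a $\Q$--Weyl chamber $S^+q$ maps isometrically into the quotient $\Gamma\backslash X$. Working at a basepoint $q$ in the flat $Tq$, it locates a lattice conjugator $\gamma\in\Gamma$ so that $gq$ lies in an $R$--tubular neighbourhood of $\gamma S^+ q$ with $2R=d_X(q,aq)$; the isometric embedding then converts the quotient estimate $d_X(\Gamma q, gq)\leq d_X(q,gq)$ into a pointwise bound on $d_X(\gamma q, gq)$, yielding $d_X(q,\gamma q)\leq 2d_X(q,gq)+R$, and the final estimate at the original basepoint $p$ follows from Lemma~\ref{lemma:distance to MIN in symmetric space} and the triangle inequality. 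Without this isometric-embedding input, or a substitute of comparable strength, I do not see how your transverse bound can be obtained. A smaller issue: you assert $Tq=\MIN{b}$ and that $T\cap\Gamma$ has finite index in $Z_G(b)\cap\Gamma$; both are fine when $b$ is regular, but if $b$ is singular real hyperbolic then $\MIN{b}$ is strictly larger than the single flat $Tq$ and $Z_G(b)$ is strictly larger than $T$, so these claims need justification or the argument should be restricted to a single flat as in the paper.
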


\begin{proof}

\begin{figure}[h!]
   \labellist
      \small\hair 5pt
         \pinlabel $\gamma Sq$ [l] at 485 288
         \pinlabel $\gamma q$ [r] at 251 211
         \pinlabel $gq$ [b] at 401 270
         \pinlabel $gTq$ at 50 330
         \pinlabel $\gamma(T\cap\Gamma)q$ [l] at 380 33
         \endlabellist
                  \centering
    \includegraphics[width=8cm]{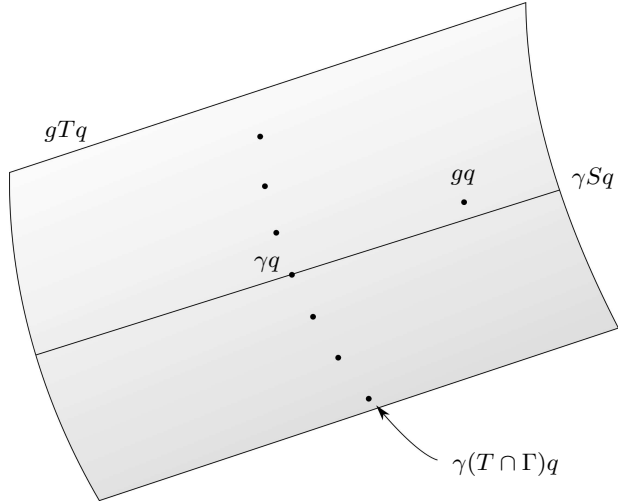}

     \caption[The intersection of a lattice with a flat when $\rrank(G)=2$]{When $\rrank(G)=2$, the quotient of the maximal flat $gTq$ by the action of $\Gamma \cap gTg^{-1}$ is a cylinder whose diameter is bounded above by $d_X(\gamma q, b\gamma q)=d_X(q,aq)$.}
\end{figure}

Suppose first that $\qrank(\Gamma)=\rrank(G)-1$ and let $S$ be a maximal $\Q$--split torus contained in $T$. To adapt the following to the case when $\qrank(\Gamma)=\rrank(G)$, we merely take $S^+$ to instead be the face of a Weyl chamber in $T$ (which will be isometric to a $\Q$--Weyl chamber, so $S^+$ will still isometrically embed into $\Gamma \backslash X$).

Let $q$ be any point in $X$ such that $Tq$ is a (maximal) flat. Note that $Tq$ will be contained in $\MIN{b}$. Choose $g \in G$ as in Theorem \ref{thm:bounded conj in G}, taking $p=q$. Then $g$ maps $Tq$ to a maximal flat contained in $\MIN{a}$ and in particular there exists $\gamma \in \Gamma$ such that $gq$ lies in an $R$--tubular neighbourhood of $\gamma S q$, where $2R=d_X(q,aq)$. In particular there is a $\Q$--Weyl chamber $S^+$ in $S$ such that $gq$ lies in an $R$--tubular neighbourhood of $\gamma S^+ q$.

The $\Q$--Weyl chamber maps isometrically into $\Gamma \backslash X$ (see \cite{Leuz04}), so in particular $d_X(\gamma q, g q) \leq d_{X}(\Gamma q , g q) + R$. Hence $d_X(q,\gamma q) \leq 2d_X(q,gq) + R$.

To finish, we need to translate it so it works for an arbitrary basepoint $p$. To do this we use Lemma \ref{lemma:distance to MIN in symmetric space} and apply the triangle inequality:
\begin{eqnarray*}
d_X(p,\gamma p) & \leq & 2d_X(p,q) + d_X(q,\gamma q) \\
				& \leq & 4L\xi d_X(p,ap) + 4L_\xi (d_X(q,aq)+d_X(q,bq)) + r \\
				& \leq & (6 L_\xi +1) (d_X(p,ap)+d_X(p,bp))
\end{eqnarray*}
Note that in the above we have assumed that $d_X(p,ap) \leq d_X(p,bp)$; if this is not the case, we can just reverse the roles of $a$ and $b$.
\end{proof}

The preceding Lemma works because we know enough about the shape and size of a fundamental domain for the action of $\Gamma\cap T$ on the flat $Tq$. In this case, $\Gamma q$ intersects $Tq$ in a copy of $\Z$, and we essentially have a tubular neighbourhood of a co-dimension $1$ flat, the radius of the neighbourhood bounded above by the translation length of $a$. 

\begin{question}
When $\Gamma q$ intersects $Tq$ in a copy of $\Z^k$, for some $k\geq 2$, what can we say about the dimensions of the fundamental domain for the action of $\Gamma \cap T$ on the $k$--dimensional flat inside $Tq$ stabilised by $\Gamma \cap T$ in terms of $d_X(p,bp)$ and $d_X(p,\MIN{b})$, where $b \in T\cap \Gamma$?
\end{question}

\section{Unipotent elements}\label{sec:unipotent}

We move on now to look at the conjugacy of unipotent elements in $G$. The result obtained here gives a linear bound on the length of a short conjugator from $G$ between two unipotent elements which satisfy a certain algebraic condition. Provided the two elements are, in some sense, pushed far enough away from certain root-spaces in $\Lie{g}$, the linear bound obtained will be uniform. The method used to prove it relies heavily on the Lie algebra and in particular on the root system corresponding to $\Lie{g}$.

Consider two conjugate elements $u,v$ in $N$. When looking at the case when $N$ is the subgroup of $\SLnR$ consisting of the unipotent upper triangular matrices, the condition we impose on $u$ and $v$ is equivalent to demanding that the super-diagonal entries in the matrix, that is the $(i,i+1)$--entries, are all not too close to zero. The short conjugator we obtain is built up by gradually knocking off entries in the matrix until you are left with a matrix with zeros above the super-diagonal. Doing this for both $u$ and $v$ gives two matrices that are then related via conjugation by a diagonal matrix. In Section \ref{sec:unipotent:outline} we give a few more details about how the process works for $\SLnR$.

In the process used to knock off the extra terms in the matrix, the super-diagonal entries play an important role and it is crucial that they are not too close to zero. The underlying root system for $\SLnR$ is of type $A_{n-1}$, and each positive root corresponds to a particular entry of the matrix. The simple roots correspond to the super-diagonal entries of the matrix. Hence we use the term ``simple case'' to describe the situation in which we insist that the super-diagonal, or simple, entries of $u$ and $v$ all avoid a neighbourhood of $0$.

The process works especially well in $\SLnR$. As mentioned in the introduction we consider only those semisimple real Lie groups whose Lie algebras are split. This is because we require that the root spaces $\Lie{g}_\lambda$ all have dimension $1$.

The ideas laid out below for the simple case could potentially be extended to a more general situation where the simple entries are allowed to be zero. First one should find the largest parabolic subgroup of $G$ which contains $u$ in its unipotent radical. When looking at the root system, this corresponds to taking bites out of it --- i.e.\ removing the linear spans of certain simple roots. We would then need to find a new subset of the set of positive roots which can play the role of the simple roots. Then conjugate by an element of the parabolic subgroup in order to ensure the corresponding entries are non-zero.

Let $N$ be a maximal unipotent subgroup of $G$. As a consequence of the root-space decomposition of the Lie algebra $\Lie{g}$, associated to $G$ is a reduced root system $\Lambda$, containing a subset $\Lambdaplus$ of positive roots, such that any element $u \in N=\exp (\Lie{n})$ can be uniquely expressed as
\begin{equation}\label{eq:u exp 2}
u=\exp \left( \sum_{\lambda \in \Lambdaplus}Y_\lambda \right)
\end{equation}
where each $Y_\lambda$ lies in a root-space $\Lie{g}_\lambda$ in $\Lie{g}$. Based on this, we introduce some terminology. For $u$ as above, the element $Y_\lambda$ will be called the $\lambda$--\emph{entry} of $u$. If $\lambda$ is a simple root in $\Lambdaplus$, that is $\lambda \in \Pi$, then we will say that $Y_\lambda$ is a \emph{simple entry}.

\subsection{Outline of the method}\label{sec:unipotent:outline}

The idea is to conjugate $u\in N$ by a sequence elements of the form $\exp(Z_\mu)$, where $Z_\mu \in \Lie{g}_\mu$, or by a commutator of two such elements (from two distinct root-spaces), each step in the sequence removing a $\lambda$--entry of $u$.

For example, when dealing with unipotent upper triangular matrices in $\SLnR$ each entry in the triangle above the diagonal corresponds to a root. The simple roots correspond to the super-diagonal entries, that is those which lie adjacent to the diagonal. Take
$$u:= \left( \begin{array}{cccc}
1  & x_{1}  & y_{1}   & z_{1} \\
0  &  1     & x_{2}   & y_{2} \\
0  &  0     &   1     & x_{3} \\
0  &  0     &   0     & 1
\end{array} \right).$$
Here the $x_i$ entries are the simple entries of $u$, the $y_i$ terms correspond to roots of height $2$ and $z_1$ to the unique root of height $3$. Suppose all these entries are non-zero. We will conjugate $u$ by an elementary matrix to make the $y_1$ term vanish:
$$\left( \begin{array}{cccc}
1  & \alpha  & 0   & 0 \\
0  &  1     & 0   & 0 \\
0  &  0     &   1     & 0 \\
0  &  0     &   0     & 1
\end{array} \right)
u
\left( \begin{array}{cccc}
1  & -\alpha  & 0   & 0 \\
0  &  1     & 0   & 0 \\
0  &  0     &   1     & 0 \\
0  &  0     &   0     & 1
\end{array} \right) = 
\left( \begin{array}{cccc}
1  & x_{1}  & y_{1} +\alpha x_2   & z_{1} + \alpha y_2 \\
0  &  1     & x_{2}   & y_{2} \\
0  &  0     &   1     & x_{3} \\
0  &  0     &   0     & 1
\end{array} \right).$$
So if we set $\alpha=-\frac{y_1}{x_2}$ then the entry where $y_1$ was has now been made to be zero. Notice that all simple entries and the other entry of height $2$ are unchanged by this conjugation --- the only collateral damage is to entries corresponding to roots of strictly greater height that the entry we removed.

The idea is to repeat this process, next removing the other height $2$ entry. This will again cause collateral damage, but it will similarly only effect the height $3$ entry. This then is the last entry to be removed and is done so by one last conjugation, but in this case there is no root of greater height than $3$, so there will be no collateral damage.

We have conjugated $u$ to 
$$u':= \left( \begin{array}{cccc}
1  & x_{1}  & 0   & 0 \\
0  &  1     & x_{2}   & 0 \\
0  &  0     &   1     & x_{3} \\
0  &  0     &   0     & 1
\end{array} \right)$$
via upper triangular matrices with rational entries. We do the same for another upper triangular matrix $v$, reducing to $v'$ in a similar manner. If $u$ and $v$ are conjugate in $\SLnZ$ then $u'$ and $v'$ must be conjugate in $\SLnQ$. In fact, when the simple entries in $u$ and $v$ are positive we can find a diagonal matrix over $\R$ to do the job:
$$\left( \begin{array}{cccc}
\alpha_1  & 0  & 0   & 0 \\
0  &  \alpha_2     & 0   & 0 \\
0  &  0     &   \alpha_3     & 0 \\
0  &  0     &   0     & \alpha_4
\end{array} \right)
u'
\left( \begin{array}{cccc}
\alpha_1  & 0  & 0   & 0 \\
0  &  \alpha_2     & 0   & 0 \\
0  &  0     &   \alpha_3   & 0 \\
0  &  0     &   0     & \alpha_4
\end{array} \right)^{-1} = 
\left( \begin{array}{cccc}
1  & w_{1}  & 0   & 0 \\
0  &  1     & w_{2}   & 0 \\
0  &  0     &   1     & w_{3} \\
0  &  0     &   0     & 1
\end{array} \right)$$
where $\alpha_1^4=\frac{w_1^3w_2^2w_3}{x_1^3x_2^2x_3}$, $\alpha_2^4=\frac{x_1 w_2^2 w_3}{w_1 x_2^3 x_3}$, $\alpha_3^4=\frac{x_1 x_2^2 w_3}{w_1 w_2^2 x_3}$, and $\alpha^4 = \frac{x_1 x_2^2 x_3^3}{w_1 w_2^2 w_3^3}$.

From our point of view, the crucial aspect of this process is that we can keep track of the size of the conjugator in each step and also control the extent of the collateral damage occurring to entries of greater height.

\subsection{The metric on $G$}

The Killing form on $\Lie{g}$ is the symmetric bilinear form $B:\Lie{g \times g} \to \R$ given by $B(V,W)=\mathrm{Trace}(\ad(V)\ad(W))$. Given $\Lie{g=k\oplus p}$, the Cartan decomposition at $p \in X$, the Killing form is negative definite on $\Lie{k}$ and positive definite on $\Lie{p}$. Furthermore $\Lie{p}$ is the orthogonal complement of $\Lie{k}$ with respect to the Killing form. Let $\theta_p$ be the Cartan involution on $\Lie{g}$ defined at $p$, that is $\theta_p$ acts on $\Lie{k}$ as the identity and $\theta_p(Y)=-Y$ for any $Y \in \Lie{p}$. We can then define an inner product on $\Lie{g}$ as follows (see \cite[\S 2.7]{Eber96} or \cite[Ch. III Prop 7.4]{Helg01}):
		$$\IPp(Y,Z)=-B(\theta_p Y, Z), \textrm{ for all }Y,Z \in \Lie{g}.$$
This then determines a left-invariant Riemannian metric $d_G$ on $G$. 
%\blue{(Stated in L. Ji -- Spectral theory and geometry of locally symmetric spaces, end of \S 3.)} 
Denote the norm on $\Lie{g}$ corresponding to $\IPp$ by $\norm{.}$.

We will be interested in the effect of the Lie bracket on the size of elements from the root-spaces of $\Lie{g}$.

\begin{prop}\label{prop:Lie product size}
Suppose $\Lie{g}$ is a split real Lie algebra. Let $Y_\lambda \in \Lie{g}_\lambda$ and $Y_\mu \in \Lie{g}_\mu$, where $\lambda \in \Pi$ and $\mu \in \Lambdaplus$ and $\lambda+\mu$ is a root. Then there exist constants $c_1\geq c_0 > 0$, independent of the choice of $Y_\lambda,Y_\mu,\lambda,\mu$, such that
$$c_1 \norm{Y_\lambda} \norm{Y_\mu} \geq \norm{[Y_\mu,Y_\lambda]} \geq c_0 \norm{Y_\lambda} \norm{Y_\mu}\textrm{.}$$
\end{prop}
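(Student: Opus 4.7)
The plan is to exploit the two standing hypotheses in a very direct way: the Lie algebra $\Lie{g}$ is \emph{split}, so every root space $\Lie{g}_\nu$ is one-dimensional, and the root system $\Lambda$ is \emph{finite}. Together these facts force the ratio $\norm{[Y_\mu,Y_\lambda]} / (\norm{Y_\lambda}\norm{Y_\mu})$ to depend only on the pair $(\lambda,\mu)$ and to range over a finite set of positive real numbers, from which $c_0$ and $c_1$ are obtained as the minimum and maximum.

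First I would fix, once and for all, a nonzero vector $X_\nu \in \Lie{g}_\nu$ for each $\nu \in \Lambda$. Since $\Lie{g}$ is split, $\dim \Lie{g}_\nu = 1$, so any $Y_\lambda \in \Lie{g}_\lambda$ and $Y_\mu \in \Lie{g}_\mu$ can be written uniquely as $Y_\lambda = a_\lambda X_\lambda$ and $Y_\mu = a_\mu X_\mu$ for some $a_\lambda, a_\mu \in \R$, with $\norm{Y_\lambda} = |a_\lambda|\norm{X_\lambda}$ and $\norm{Y_\mu} = |a_\mu|\norm{X_\mu}$. Recall from the root-space decomposition that $[\Lie{g}_\mu,\Lie{g}_\lambda] \subseteq \Lie{g}_{\lambda+\mu}$, so there exists a scalar $N_{\mu,\lambda} \in \R$ with $[X_\mu,X_\lambda] = N_{\mu,\lambda}\, X_{\lambda+\mu}$. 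Bilinearity of the Lie bracket then gives
$$[Y_\mu,Y_\lambda] = a_\mu a_\lambda N_{\mu,\lambda}\, X_{\lambda+\mu},$$
and taking norms yields
$$\norm{[Y_\mu,Y_\lambda]} \;=\; \frac{|N_{\mu,\lambda}|\,\norm{X_{\lambda+\mu}}}{\norm{X_\lambda}\,\norm{X_\mu}}\; \norm{Y_\lambda}\norm{Y_\mu}.$$

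The only point that needs genuine Lie-theoretic input is the claim that $N_{\mu,\lambda} \neq 0$ whenever $\lambda+\mu$ is a root. This is the standard consequence of the $\mathfrak{sl}_2$-theory of root strings applied to a semisimple Lie algebra: if $\alpha+\beta \in \Lambda$ then in fact $[\Lie{g}_\alpha,\Lie{g}_\beta] = \Lie{g}_{\alpha+\beta}$. In the split real setting this follows by exactly the same arguments as in the complex case, since the structure is real-algebraic; I would simply cite \cite{Helg01}. With $N_{\mu,\lambda}\neq 0$, the coefficient
$$C_{\mu,\lambda} \;:=\; \frac{|N_{\mu,\lambda}|\,\norm{X_{\lambda+\mu}}}{\norm{X_\lambda}\,\norm{X_\mu}}$$
is strictly positive.

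Finally, the set of admissible pairs $\{(\lambda,\mu) : \lambda \in \Pi,\ \mu \in \Lambdaplus,\ \lambda+\mu \in \Lambda\}$ is a subset of the finite set $\Pi \times \Lambdaplus$, and so the set $\{C_{\mu,\lambda}\}$ over these pairs is finite and consists of strictly positive numbers. Setting $c_0 := \min C_{\mu,\lambda} > 0$ and $c_1 := \max C_{\mu,\lambda}$ yields constants independent of the choice of $Y_\lambda,Y_\mu,\lambda,\mu$ satisfying the desired two-sided inequality. There is no serious obstacle in this argument; the only thing to be careful about is the invocation of the nonvanishing of $N_{\mu,\lambda}$ when $\lambda+\mu$ is a root, and the hypothesis that $\Lie{g}$ is split, without which the one-dimensionality of root spaces (and hence the reduction to a single scalar per pair) would fail.
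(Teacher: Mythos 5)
Your proof is correct and takes essentially the same route as the paper: both exploit that the root spaces are one-dimensional (the split hypothesis), reduce via bilinearity to a scalar $c_{\lambda,\mu}$ per pair of roots, and take the min and max over the finite set of admissible pairs. The one thing you do more carefully is explicitly flag and justify the nonvanishing of the structure constant $N_{\mu,\lambda}$ when $\lambda+\mu\in\Lambda$, which the paper's proof leaves implicit even though it is exactly what makes $c_0>0$.
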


\begin{proof}
This follows from the fact that the root-spaces have dimension one and also from the bilinearity of the Lie bracket and of the inner product $\IPp$. In particular, if we let $Z_\lambda$ denote one of the two elements of $\Lie{g}_\lambda$ such that $\norm{Z_\lambda}=1$, and similarly for $Z_\mu$, then for  $\alpha,\beta \in \R$ such that $Y_\lambda =\alpha Z_\lambda$ and $Y_\mu = \beta Z_\mu$:
		$$\norm{[Y_\lambda,Y_\mu]}=\modulus{\alpha}\modulus{\beta}c_{\lambda,\mu}$$
where $c_{\lambda,\mu}=\norm{[Z_\lambda,Z_\mu]}$. By taking
\begin{eqnarray*}
&c_0 = \min \{c_{\lambda,\mu} \mid \mu \in \Lambdaplus, \lambda \in \Pi  \textrm{ such that } \lambda+\mu \in \Lambdaplus \}& \\
&c_1 = \max \{c_{\lambda,\mu} \mid \mu \in \Lambdaplus, \lambda \in \Pi  \textrm{ such that } \lambda+\mu \in \Lambdaplus \}&
\end{eqnarray*}
we obtain the result, since $\modulus{\alpha}=\norm{Y_\lambda}$ and $\modulus{\beta}=\norm{Y_\mu}$.
\end{proof}

The following tells us that the size of any $\lambda$--entry of $u$ will give us a lower bound for the size of $d_G(1,u)$.

\begin{lemma}\label{lemma:unipotent entry bounded by u}
Let $u \in N$ be as in (\ref{eq:u exp 2}). For each $\lambda \in \Lambdaplus$ we have $\norm{Y_\lambda} \leq d_G (1,u)$.
\end{lemma}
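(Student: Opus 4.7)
The plan is to exploit the orthogonality of distinct positive root-spaces under $\varphi_p$, and then invoke the natural length bound coming from the one-parameter subgroup through $Y$.

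The first step is linear-algebraic: the root-spaces $\{\Lie{g}_\lambda\}_{\lambda \in \Lambdaplus}$ are mutually orthogonal with respect to $\varphi_p$. The Cartan involution $\theta_p$ carries $\Lie{g}_\lambda$ isomorphically onto $\Lie{g}_{-\lambda}$, and the Killing form $B$ pairs $\Lie{g}_\alpha$ with $\Lie{g}_\beta$ non-trivially only when $\alpha + \beta = 0$. For distinct $\lambda, \mu \in \Lambdaplus$ and elements $Y_\lambda \in \Lie{g}_\lambda$, $Y_\mu \in \Lie{g}_\mu$, this gives $\varphi_p(Y_\lambda, Y_\mu) = -B(\theta_p Y_\lambda, Y_\mu) = 0$, since $\theta_p Y_\lambda \in \Lie{g}_{-\lambda}$ and $-\lambda + \mu \neq 0$. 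Writing $Y = \sum_{\lambda \in \Lambdaplus} Y_\lambda$ as in \eqref{eq:u exp 2}, the Pythagorean identity $\norm{Y}^2 = \sum_\lambda \norm{Y_\lambda}^2$ follows immediately, so $\norm{Y_\lambda} \leq \norm{Y}$ for every $\lambda \in \Lambdaplus$.

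The remaining step is to compare $\norm{Y}$ with $d_G(1, u)$. Since $\Lie{n}$ is nilpotent and $\exp\colon \Lie{n} \to N$ is a diffeomorphism, the smooth curve $\gamma(t) = \exp(tY)$, $t \in [0,1]$, joins $1$ to $u$; its velocity $\dot\gamma(t)$ is the left-translate of $Y$, so by left-invariance of the Riemannian metric the curve has length exactly $\norm{Y}$. The delicate part — where I expect the argument to require the most care — is to promote this length computation into the asserted inequality $\norm{Y_\lambda} \leq d_G(1,u)$. A natural route is to argue that projection onto a single root-space $\Lie{g}_\lambda$ is length non-increasing along any smooth path from $1$ to $u$, so integrating such a path and projecting the infinitesimal displacement onto $\Lie{g}_\lambda$ forces the path length to be at least $\norm{Y_\lambda}$; passing to the infimum over connecting paths then yields the desired bound. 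The orthogonal decomposition of $\Lie{n}$ into root-spaces, established in the first step, is precisely what makes this coordinate-wise estimate coherent.
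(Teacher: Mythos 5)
Your first step --- establishing $\IPp$-orthogonality of the positive root-spaces --- is more careful than the paper's own treatment. The paper ascribes the orthogonality to the Killing form ``and hence also the inner product $\IPp$,'' but $B$ restricted to $\Lie{n}$ vanishes identically (distinct positive $\lambda,\mu$ never satisfy $\lambda+\mu=0$); the orthogonality that actually matters is precisely what you derive by feeding $\theta_p$ through $B$, using $\theta_p\Lie{g}_\lambda=\Lie{g}_{-\lambda}$. The Pythagorean estimate $\norm{Y_\lambda}\le\norm{Y}$ is then immediate, just as in the paper.

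Where you diverge is in the final step, and here you have correctly put your finger on the weak point. The paper writes the chain $\norm{Y_\lambda}\le\norm{\textstyle\sum Y_\lambda}=d_G(1,u)$ and stops. You observe that the one-parameter subgroup $\exp(tY)$ only yields the upper bound $d_G(1,u)\le\norm{Y}$ --- the opposite direction from what the lemma needs --- and that a genuine lower bound on $d_G(1,u)$ requires a separate argument. The fix you sketch (a length non-increasing projection onto each root-space) is a reasonable instinct, but as written it is only a plan: you do not specify the map (it would need to be defined on a neighbourhood of $N$ in $G$, not just on $N$, since minimizing paths need not stay in $N$), nor verify that it is $1$-Lipschitz for $d_G$, and this is exactly where the content of the lemma lies. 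Note that the paper's own proof does not address this either; it simply identifies $\norm{Y}$ with $d_G(1,u)$, whereas only the inequality $d_G(1,u)\le\norm{Y}$ follows automatically from the curve you exhibit.
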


\begin{proof}
Since the root-spaces $\Lie{g}_\lambda$ for $\lambda \in \Lambdaplus$ are pairwise orthogonal with respect to the Killing form, and hence also the inner product $\IPp$, we observe that:
	$$\norm{Y_\lambda} \leq \left\|\sum_{\lambda \in \Lambda}Y_\lambda \right\| = d_G(1,u)
	\textrm{.}$$
\end{proof}

\section{The role of the root system}\label{sec:unipotent:root system}

\subsection{Relating the root system to conjugation}

If $N$ is a maximal unipotent subgroup with corresponding root system $\Lambda$ then any element in $N$ can be written uniquely as 
\begin{equation}\label{eq:u exp}
u=\exp \left( \sum_{\lambda \in \Lambdaplus} Y_{\lambda} \right)
\end{equation}
where $Y_\lambda \in \Lie{g}_\lambda$. We begin by studying the behaviour of the $\lambda$--entries under the action of conjugation by elements in $N$ of the form $\exp(Z_\mu)$, where $Z_\mu \in \Lie{g}_\mu$, or a commutator of two such elements.

\begin{lemma}\label{lem:conjugating by mu}
Let $u \in N$ be as in (\ref{eq:u exp}) and let $Z_{\mu} \in \Lie{g}_{\mu}$ for some $\mu \in \Lambdaplus$. When we conjugate $u$ by $\exp (Z_{\mu})$ all entries of $u$ are unchanged except (possibly) for the $\lambda$--entries where $\lambda = r\mu + \lambda'$ for some $r \in \mathbb{N}$ ($r \neq 0$) and $\lambda' \in \Lambdaplus$ such that $Y_{\lambda'} \neq 0$.
Furthermore, the $\lambda$--entry of $\exp(Z_\mu)u\exp(-Z_\mu)$ is
$$
\sum_{r\mu+\lambda'=\lambda} \frac{(\ad Z_\mu)^r Y_{\lambda'}}{r!}
$$
where the sum takes values of $r$ from $\N \cup \{ 0 \}$ and $\lambda'$ from $\Lambdaplus$.
\end{lemma}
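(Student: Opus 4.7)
The plan is to reduce everything to the Lie-algebraic identity
$$\exp(X)\exp(Y)\exp(-X) = \exp\bigl(\Ad(\exp X)\, Y\bigr) = \exp\bigl(e^{\ad X}\, Y\bigr),$$
applied with $X = Z_\mu$ and $Y = \sum_{\lambda' \in \Lambdaplus} Y_{\lambda'}$. Writing $V := \sum_{\lambda'} Y_{\lambda'}$, this gives
$$\exp(Z_\mu)\, u\, \exp(-Z_\mu) = \exp\!\left(\sum_{r \geq 0} \frac{(\ad Z_\mu)^r}{r!}\, V\right).$$
The sum on the right terminates because $\Lie{n}$ is nilpotent, so the exponent is a well-defined element of $\Lie{n}$. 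By uniqueness of the expression (\ref{eq:u exp 2}), reading off the $\lambda$--entry of the conjugate amounts to reading off the $\Lie{g}_\lambda$--component of this exponent.

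Next I would invoke the root-space property $[\Lie{g}_\mu, \Lie{g}_{\lambda'}] \subseteq \Lie{g}_{\mu + \lambda'}$, with the convention that $\Lie{g}_{\mu + \lambda'} = 0$ whenever $\mu + \lambda'$ is not a root. Iterating, $(\ad Z_\mu)^r Y_{\lambda'}$ lies in $\Lie{g}_{r\mu + \lambda'}$ and is zero unless $r\mu + \lambda'$ is a root (or $r=0$). Collecting contributions to the root space $\Lie{g}_\lambda$ across $r \geq 0$ and $\lambda' \in \Lambdaplus$, the $\lambda$--entry of $\exp(Z_\mu) u \exp(-Z_\mu)$ equals
$$\sum_{\substack{r\mu + \lambda' = \lambda \\ r \geq 0,\ \lambda' \in \Lambdaplus}} \frac{(\ad Z_\mu)^r Y_{\lambda'}}{r!},$$
which is the formula claimed in the second part of the lemma.

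For the first part, observe that the $r=0$ term contributes exactly $Y_\lambda$. Thus the $\lambda$--entry is altered (beyond this original contribution) only by terms with $r \geq 1$, which force $\lambda = r\mu + \lambda'$ for some $r \in \N$, $\lambda' \in \Lambdaplus$; moreover if $Y_{\lambda'} = 0$ there is no contribution, so one may restrict to $\lambda'$ with $Y_{\lambda'} \neq 0$. For any $\lambda$ not admitting such a decomposition, the entry is untouched, yielding precisely the statement of the lemma.

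There is no genuine obstacle in this argument; it is a bookkeeping exercise around the Adjoint identity. The one point requiring mild care is justifying the termination of the series $e^{\ad Z_\mu}$, which follows immediately from the nilpotency of $\ad Z_\mu$ on $\Lie{n}$ (a consequence of the fact that heights of roots are bounded on the finite root system $\Lambdaplus$), together with the finiteness of $\Lambdaplus$ ensuring that only finitely many $(r, \lambda')$ can satisfy $r\mu + \lambda' = \lambda$.
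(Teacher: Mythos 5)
Your proof is correct and follows essentially the same approach as the paper: apply the identity $\exp(Z)u\exp(-Z)=\exp(e^{\ad Z}\cdot)$, expand the series, and use $(\ad Z_\mu)^r Y_{\lambda'}\in\Lie{g}_{r\mu+\lambda'}$ to read off the $\lambda$--entry. You add a bit more detail on the termination of the series and on restricting to $Y_{\lambda'}\neq 0$, but the core argument is identical.
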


\begin{proof}
Let $Z=Z_\mu$. We observe that:
\begin{eqnarray*}
\exp (Z) u \exp(-Z) & = & \exp\left( e^{\ad (Z)} \sum_{\lambda' \in \Lambdaplus} Y_{\lambda'} \right) \\
                    & = & \exp \left( \sum_{r=0}^{\infty} \sum_{\lambda' \in \Lambdaplus} \frac{( \ad Z)^{r} Y_{\lambda'}}{r!} \right)
\end{eqnarray*}
Recall that if $r\mu + \lambda'$ is not a root then $(\ad Z)^{r}Y_{\lambda'}=0$. Otherwise $(\ad Z)^{r}Y_{\lambda'} \in \Lie{g}_{r\mu+\lambda'}$. It follows that if $\lambda$ cannot be written as $r\mu + \lambda'$ for any $r \neq 0$ or any $\lambda'$ then the $\lambda$--entry of $u$ in unchanged by this conjugation.
\end{proof}

The preceding Proposition is important in recognising the link between conjugation of unipotent elements and the root system of $G$. In particular we can see that if the $\lambda$--entry of $u$ is affected by conjugating by $\exp (Z_{\mu})$ then the height of $\lambda$, denoted $\hgt{\lambda}$, must be greater than $\hgt{\mu}$. Furthermore, the affected entries whose height is precisely $\hgt{\mu}+1$ will be in the set $\lbrace \mu \rbrace + \Pi$, where $\Pi$ is the set of simple roots in $\Lambdaplus$. This is crucial for motivating Lemma \ref{lemma:simple unipotent}.

To complete the picture which lies behind the scenes of Lemma \ref{lemma:simple unipotent} we must also consider conjugating by a commutator of two elements. Building up to this, which is Lemma \ref{lem:conjugating by commutator}, we give the following:

\begin{lemma}\label{lem:conjugating by product}
Let $u \in N$ be as in (\ref{eq:u exp}) and let $Z_{1} \in \Lie{g}_{\mu_{1}}$, $Z_{2} \in \Lie{g}_{\mu_{2}}$ for some $\mu_{1} , \mu_{2} \in \Lambdaplus$. When we conjugate $u$ by $\exp(Z_{1})\exp(Z_{2})$ all entries of $u$ are unchanged except (possibly) for the $\lambda$--entries where $\lambda = r\mu_{1} + t\mu_{2} + \lambda'$ for some $\lambda' \in \Lambdaplus$ and non-negative integers $r,t$ where at least one of $r,t$ is non-zero.
\end{lemma}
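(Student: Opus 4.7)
The plan is to view conjugation by $\exp(Z_1)\exp(Z_2)$ as the two-step process of first conjugating by $\exp(Z_2)$ and then by $\exp(Z_1)$, and to apply Lemma \ref{lem:conjugating by mu} in succession. Concretely, set
$$u' = \exp(Z_2) u \exp(-Z_2), \qquad u'' = \exp(Z_1) u' \exp(-Z_1),$$
so that $u'' = \exp(Z_1)\exp(Z_2) u \exp(-Z_2)\exp(-Z_1)$ is the element whose entries we wish to compare with those of $u$.

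First I would apply Lemma \ref{lem:conjugating by mu} with $Z_\mu = Z_2$: the $\nu$--entry of $u'$ equals the $\nu$--entry of $u$ unless $\nu = t\mu_2 + \lambda'$ for some $t \in \N$ with $t \geq 1$ and some $\lambda' \in \Lambdaplus$ with $Y_{\lambda'} \neq 0$. In particular, every index $\nu$ at which $u'$ has a non-trivial entry can be written as $\nu = t\mu_2 + \lambda'$ for some $\lambda' \in \Lambdaplus$ and some $t \geq 0$, where $t = 0$ corresponds to unchanged entries and $t \geq 1$ to possibly modified ones.

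Next I would apply Lemma \ref{lem:conjugating by mu} a second time, now with $Z_\mu = Z_1$, to compare $u''$ with $u'$: the $\lambda$--entry of $u''$ agrees with the $\lambda$--entry of $u'$ unless $\lambda = r\mu_1 + \nu$ for some $r \geq 1$ and some $\nu \in \Lambdaplus$ at which $u'$ has a non-trivial entry. The argument then concludes by a case distinction on whether the $\lambda$--entry of $u''$ actually differs from that of $u$. If the $\lambda$--entries of $u''$ and $u'$ coincide but differ from that of $u$, then the first step forces $\lambda = t\mu_2 + \lambda'$ with $t \geq 1$ and $\lambda' \in \Lambdaplus$, which is the desired decomposition with $r = 0$. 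If instead the $\lambda$--entry of $u''$ differs from that of $u'$, the second application forces $\lambda = r\mu_1 + \nu$ with $r \geq 1$ for some $\nu$ at which $u'$ is non-trivial, and the first step then rewrites $\nu = t\mu_2 + \lambda'$ with $t \geq 0$ and $\lambda' \in \Lambdaplus$, yielding $\lambda = r\mu_1 + t\mu_2 + \lambda'$ with $r \geq 1$. In either case $(r,t) \neq (0,0)$, exactly as claimed.

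The argument is essentially a bookkeeping exercise layered on top of Lemma \ref{lem:conjugating by mu}, so I do not anticipate a substantial obstacle. The only care needed is in the case distinction above: one must correctly allow the boundary cases $r = 0$ and $t = 0$ individually while preventing the trivial case $r = t = 0$, so that the constraint ``at least one of $r,t$ is non-zero'' is preserved when passing from the intermediate decomposition of $\nu$ back to the final $(r,t,\lambda')$--decomposition of $\lambda$.
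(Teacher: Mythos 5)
Your proof is correct, and the route you take is a clean repackaging of what the paper does. The paper's own argument re-runs the computation explicitly, obtaining the nested formula
$$\exp(Z_{1})\exp(Z_{2})u\exp(-Z_{2})\exp(-Z_{1}) = \exp\left(\sum_{t\geq 0}\sum_{r\geq 0}\sum_{\lambda'\in\Lambdaplus} \frac{(\ad Z_{1})^{r}(\ad Z_{2})^{t}}{r!\,t!}\,Y_{\lambda'}\right),$$
and then reads off that each summand lives in $\Lie{g}_{r\mu_1+t\mu_2+\lambda'}$, so only entries indexed by such $\lambda$ with $(r,t)\neq(0,0)$ can move. You instead treat Lemma \ref{lem:conjugating by mu} as a black box and apply it twice, once for $\exp(Z_2)$ and once for $\exp(Z_1)$, then do the bookkeeping by a case split. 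The two arguments rest on the same underlying fact (that $\ad Z_\mu$ shifts root spaces by $\mu$), so this is not a substantively different approach; the only trade-off is that the paper's explicit formula is reused in the proof of Lemma \ref{lem:conjugating by commutator}, so having it written out is convenient, whereas your version is more economical for this lemma alone. Your case analysis is correct: you correctly allow $t=0$ when the intermediate $\nu$-entry is an unchanged nonzero entry of $u$, and you correctly ensure that in each branch at least one of $r,t$ is strictly positive.
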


\begin{proof}
As in the proof of Lemma \ref{lem:conjugating by mu} we get:
$$
\exp(Z_{2})\exp(Z_{1})u\exp(-Z_{1})\exp(-Z_{2}) = \sum_{t=0}^{\infty}\sum_{r=0}^{\infty}\sum_{\lambda'\in\Lambdaplus} \frac{(\ad Z_{2})^{t}(\ad Z_{1})^{r}}{r!t!}Y_{\lambda'}
$$
Since $(\ad Z_{2})^{t}(\ad Z_{1})^{r}Y_{\lambda'} \in \Lie{g}_{r\mu_{1}+t\mu_{2}+\lambda'}$ if $r\mu_{1} + t\mu_{2} + \lambda'$ is a root, or is zero otherwise,  it follows as in Lemma \ref{lem:conjugating by mu} that if $\lambda$ cannot be expressed as $r\mu_{1} + t\mu_{2} + \lambda'$ for some $\lambda'$ and non-negative integers $r,t$ where one of $r,t$ is non-zero, then the $\lambda$--entry of $u$ in not affected by this conjugation process.
\end{proof}

Rather than conjugating by $\exp(Z_{1})\exp(Z_{2})$ we will conjugate by their commutator $\left[ \exp(Z_{1}) \exp(Z_{2}) \right]$. The extra terms in the product act to clean up any effect conjugating by $\exp(Z_{1})\exp(Z_{2})$ had on the entries of height less than or equal to $\hgt{\mu_{1}}+\hgt{\mu_{2}}$. Observe that the $\lambda$--entry of the conjugate of $u$ by $\exp(Z_{1})\exp(Z_{2})$ is:
$$\sum_{r\mu_{1}+t\mu_{2} + \lambda' = \lambda}  \frac{(\ad Z_{2})^{t}(\ad Z_{1})^{r}}{r!t!}Y_{\lambda'}\textrm{.}
$$
Suppose $\hgt{\lambda} \leq \hgt{\mu_{1}}+\hgt{\mu_{2}}$. Then in each term in the sum either $r=0$ or $t=0$. We can therefore rewrite it as:
$$\sum_{r\mu_{1}+\lambda'=\lambda}\frac{(\ad Z_{1})^{r}}{r!}Y_{\lambda'} + \sum_{t\mu_{2}+\lambda'=\lambda}\frac{(\ad Z_{2})^{t}}{t!}Y_{\lambda'}-Y_{\lambda}\textrm{.}
$$
The extra $-Y_{\lambda}$ term is needed because when $r=t=0$ we count $Y_{\lambda}$ twice when it should only be counted once.
Next we conjugate by $\exp(-Z_{1})\exp(-Z_{2})$ and we get the following for the $\lambda$--entry:
\begin{eqnarray*}
\sum_{R\mu_{1}+\lambda'=\lambda}\frac{(\ad (-Z_{1}))^{R}}{R!}\left(\sum_{r\mu_{1}+\lambda''=\lambda'}\frac{(\ad Z_{1})^{r}}{r!}Y_{\lambda''} + \sum_{t\mu_{2}+\lambda''=\lambda'}\frac{(\ad Z_{2})^{t}}{t!}Y_{\lambda''} -Y_{\lambda'}\right) \\
 + \sum_{T\mu_{2}+\lambda'=\lambda}\frac{(\ad (-Z_{2}))^{T}}{T!}\left(\sum_{r\mu_{1}+\lambda''=\lambda'}\frac{(\ad Z_{1})^{r}}{r!}Y_{\lambda''} + \sum_{t\mu_{2}+\lambda''=\lambda'}\frac{(\ad Z_{2})^{t}}{t!}Y_{\lambda''}-Y_{\lambda'}\right) - Y_{\lambda}
\end{eqnarray*}
Since $\hgt \lambda \leq \hgt{\mu_1}+\hgt{\mu_2}$, we cannot write $\lambda = R\mu_{1}+t\mu_{2} + \lambda'$ when both $R,t$ are non-zero (and similarly for $r$ and $T$). Hence this expression can be reduced to:
\begin{eqnarray*}
\sum_{R\mu_{1}+\lambda'=\lambda}\frac{(\ad (-Z_{1}))^{R}}{R!}\left(\sum_{r\mu_{1}+\lambda''=\lambda'}\frac{(\ad Z_{1})^{r}}{r!}Y_{\lambda''}\right) \\
 + \sum_{T\mu_{2}+\lambda'=\lambda}\frac{(\ad (-Z_{2}))^{T}}{T!}\left(\sum_{t\mu_{2}+\lambda''=\lambda'}\frac{(\ad Z_{2})^{t}}{t!}Y_{\lambda''}\right)-Y_{\lambda}
\end{eqnarray*}
This can be rewritten as:
\begin{eqnarray*}
\sum_{R\mu_{1}+\lambda'=\lambda}\left(\sum_{r\mu_{1}+\lambda''=\lambda'}\frac{(-1)^{R}}{R!r!}(\ad Z_{1})^{R+r}Y_{\lambda''}\right) \\
 + \sum_{T\mu_{2}+\lambda'=\lambda}\left(\sum_{t\mu_{2}+\lambda''=\lambda'}\frac{(-1)^{T}}{T!t!}(\ad Z_{2})^{T+t}{t!}Y_{\lambda''}\right)-Y_{\lambda}
\end{eqnarray*}
Notice that whenever $R+r \neq 0$ all the terms cancel, since if $R+r=k\neq 0$ then the coefficient of $(\ad Z_1)^k Y_{\lambda''}$ is:
$$\sum_{R+r=k}\frac{(-1)^{R}}{R!r!}=0
\textrm{.}$$
%Proof:
%\begin{eqnarray*}
%\sum_{R+r=k}\frac{(-1)^R}{R!r!}} & = & \frac{1}{k!}\sum_{R+r=k}\frac{(-1)^R k!}{R!r!}\\
%								 & = & \frac{1}{k!}\sum_{R+r=k}(-1)^R\left(\frac{(k-1)!}{(R-1)!r!} + \frac{(k-1)!}{R!(r-1)!}\right)\\
%								 & = & \frac{1}{k!}\left( -\sum_{R'+r=k-1}\frac{(-1)^{R'}}{R'!r!} + \sum_{R+r'=k-1}\frac{(-1)^R}{R!r'!}\right)\\
%								 & = & 0
%\end{eqnarray*}

A similar statement holds for $T+t\neq 0$. Hence, whenever  $\hgt \lambda \leq \hgt{\mu_1}+\hgt{\mu_2}$, the $\lambda$--entry is $Y_\lambda$. We use this in the following:

\begin{lemma}\label{lem:conjugating by commutator}
Let $u \in N$ be as in (\ref{eq:u exp}) and let $Z_{1} \in \Lie{g}_{\mu_{1}}$, $Z_{2} \in \Lie{g}_{\mu_{2}}$ for some $\mu_{1} , \mu_{2} \in \Lambdaplus$. When we conjugate $u$ by $[\exp(Z_{1}),\exp(Z_{2})]$ all entries of $u$ are unchanged except (possibly) for the $\lambda$--entries where $\lambda = r\mu_{1} + t\mu_{2} + \lambda'$ for some $\lambda' \in \Lambdaplus$ and non-negative integers $r,t$.	

Furthermore, for such $\lambda$, the $\lambda$--entry of the conjugate is
$$\sum_{(r+t)\mu_1 + (s+u)\mu_2 + \lambda' = \lambda} \frac{\ad(-Z_2)^u \ad(-Z_1)^t \ad (Z_2)^s \ad (Z_1)^r Y_\lambda'}{r!s!t!u!}
$$
where the summation takes non-negative integers $r,s,t,u$ and positive roots $\lambda'$.
\end{lemma}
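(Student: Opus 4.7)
The plan is to reduce the lemma to four successive applications of Lemma \ref{lem:conjugating by mu}. Following the convention established in the derivation preceding the lemma, conjugation by $[\exp(Z_1), \exp(Z_2)]$ is carried out as four single conjugations performed in the order $\exp(Z_1)$, $\exp(Z_2)$, $\exp(-Z_1)$, $\exp(-Z_2)$. Each of these is a conjugation by $\exp(W)$ with $W$ in a single root space, so Lemma \ref{lem:conjugating by mu} governs its effect on the root-space decomposition of the intermediate element at every step.

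For the first assertion, recall from Lemma \ref{lem:conjugating by mu} that a single conjugation by $\exp(\pm Z_i)$ can alter only those entries at roots of the form $k\mu_i + \nu$, where $\nu$ is a positive root carrying a nonzero entry in the intermediate element. Iterating over the four conjugations, any root $\lambda$ whose entry has been changed at the end of the process must be expressible as $\lambda = r\mu_1 + s\mu_2 + t\mu_1 + u\mu_2 + \lambda'$ with $r,s,t,u$ non-negative integers (not all zero) and $\lambda' \in \Lambdaplus$. Rearranging gives $\lambda = (r+t)\mu_1 + (s+u)\mu_2 + \lambda'$, exactly the form asserted in the lemma; entries not of this form are untouched at every stage.

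For the explicit formula, I would track the effect of the four conjugations by applying the formula of Lemma \ref{lem:conjugating by mu} in turn. After conjugating $u$ by $\exp(Z_1)$, the $\nu$-entry becomes $\sum_{r\mu_1 + \lambda' = \nu} \frac{\ad(Z_1)^r Y_{\lambda'}}{r!}$. Feeding this into the formula for conjugation by $\exp(Z_2)$ prepends $\ad(Z_2)^s/s!$, and the next two iterations with $-Z_1$ and then $-Z_2$ prepend $\ad(-Z_1)^t/t!$ and $\ad(-Z_2)^u/u!$ respectively. Collapsing the resulting nested sums using the constraint $(r+t)\mu_1 + (s+u)\mu_2 + \lambda' = \lambda$ produces the stated expression.

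The argument is essentially bookkeeping once the commutator is unpacked as a product of four single-root-space exponentials. The only point that warrants care is ensuring that the order of the $\ad$ operators in the final formula matches the displayed expression: the outermost conjugation (by $\exp(-Z_2)$) contributes the leftmost operator while the innermost (by $\exp(Z_1)$) contributes the rightmost one, which agrees with the statement. No further cancellations occur in the general case; the cancellations exploited in the $\hgt(\lambda) \leq \hgt(\mu_1) + \hgt(\mu_2)$ computation preceding the lemma happen only because the height constraint forces either $r+t = 0$ or $s+u = 0$ in each surviving summand, and that phenomenon has already been isolated in the preparatory discussion.
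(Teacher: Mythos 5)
Your proof is correct and takes essentially the same approach as the paper: the paper's proof is simply the one-line observation that the formula follows ``by repeating Lemma \ref{lem:conjugating by mu}'' across the four factors of the commutator, combined with the cancellation computation carried out in the text preceding the lemma. Your proposal spells out the bookkeeping of that iteration and the ordering of the $\ad$ operators in the same spirit, just with more detail.
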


\begin{proof}
By repeating Lemma \ref{lem:conjugating by mu} we get that the $\lambda$--entry of the conjugate of $u$ by $[\exp(Z_{1}),\exp(Z_{2})]$ is given by
$$\sum_{(r+t)\mu_1 + (s+u)\mu_2 + \lambda' = \lambda} \frac{\ad(-Z_2)^u \ad(-Z_1)^t \ad (Z_2)^s \ad (Z_1)^r Y_\lambda'}{r!s!t!u!}
$$
as required.

This, together with the argument preceding the statement of the Proposition, gives the result.
\end{proof}

The important difference between Lemma \ref{lem:conjugating by commutator} and Lemma \ref{lem:conjugating by product} is that when we conjugate by the commutator all entries of height no more than $\hgt{\mu_{1}}+\hgt{\mu_{2}}$ are left unchanged. Furthermore the only (possibly) affected entries of height $\hgt{\mu_{1}}+\hgt{\mu_{2}} +1$ are precisely those entries corresponding to roots in the set $\lbrace \mu_{1} \rbrace + \lbrace \mu_{2} \rbrace + \Pi$ where $\Pi$ is the set of simple roots in $\Lambdaplus$.

\subsection{An ordering on the root system}\label{sec:simple case}

We consider in this paper the ``simple case,'' by which we mean the case when $u$ is given by
$$ u = \exp \left( \sum_{\lambda \in \Lambdaplus} Y_\lambda\right)
$$
and $Y_\lambda \neq 0$ for each simple root $\lambda$. The aim is to find a sequence of elements like those considered in Lemmas \ref{lem:conjugating by mu} and \ref{lem:conjugating by commutator} which reduce $u$ to a form where the only non-zero $\lambda$--entries are those where $\lambda$ is simple. The following Lemma is necessary to ensure that such a sequence of elements can be found in the simple case.

\begin{lemma}\label{lemma:simple unipotent}
Let $\Lambda^{+}$ be a set of positive roots and $\Pi$ the corresponding simple roots associated to a reduced root system $\Lambda$. We can assign to $\Lambdaplus$ an ordering, which we will denote by $<$, such that for every $\lambda \in \Lambdaplus \setminus \Pi$ either:
\begin{enumerate}[label=(\alph*)]
\item \label{lemma:simple unipotent mu} there exists some root $\mu$ such the set $\lbrace \mu \rbrace + \Pi$ contains $\lambda$ and $\lambda \neq \lambda' \in \{ \mu \} + \Pi$ implies $\lambda<\lambda'$; or
\item \label{lemma:simple unipotent mu1 mu2} there exist roots $\mu_{1} , \mu_{2} \in \Lambdaplus$ such that  $\lbrace \mu_{1} \rbrace + \lbrace \mu_{2} \rbrace + \Pi = \lbrace \lambda \rbrace$ and $\mu_1 + \mu_2$ is not a root.
\end{enumerate}
\end{lemma}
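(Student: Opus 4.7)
The plan is to build the total ordering on $\Lambdaplus$ inductively by height, making suitable choices so that each non-simple root falls under either case (a) or case (b). I will first declare $\lambda < \lambda'$ whenever $\hgt(\lambda) < \hgt(\lambda')$; since all members of $\{\mu\}+\Pi$ share the common height $\hgt(\mu)+1$, only the within-height refinement is relevant to case (a). For each non-simple $\lambda$ of height $h$, I examine all decompositions $\lambda = \mu+\alpha$ with $\mu \in \Lambdaplus$ and $\alpha \in \Pi$, and compute the competitor set $C(\mu) := (\{\mu\}+\Pi) \cap \Lambdaplus$. If some choice of $\mu$ yields $C(\mu) = \{\lambda\}$, case (a) holds trivially and imposes no ordering constraints. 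Otherwise I fix a preferred $\mu_\lambda$, which imposes constraints $\lambda < \lambda'$ for each $\lambda' \in C(\mu_\lambda) \setminus \{\lambda\}$; the aim is that these constraints, collected across all height-$h$ roots handled by case (a), extend to a total order within that height.

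When no consistent choice of preferred $\mu_\lambda$'s can cover every height-$h$ root via case (a), I relieve the obstruction by verifying case (b) for one or more of the problem roots. Case (b) requires $\mu_1, \mu_2 \in \Lambdaplus$ with $\mu_1+\mu_2 \notin \Lambdaplus$ and $(\{\mu_1\}+\{\mu_2\}+\Pi) \cap \Lambdaplus = \{\lambda\}$. A natural construction is to take $\mu_1, \mu_2 \in \Pi$ non-adjacent in the Dynkin diagram, so that $\mu_1+\mu_2$ is automatically not a root. For example, in $D_4$ the three height-$3$ roots $A=\alpha_1+\alpha_2+\alpha_3$, $B=\alpha_1+\alpha_2+\alpha_4$, and $C=\alpha_2+\alpha_3+\alpha_4$ jointly generate cyclic constraints if all three are handled by case (a); handling $C$ via case (b) with $\mu_1=\alpha_3$ and $\mu_2=\alpha_4$ (which are non-adjacent, and for which $C$ is the unique root of the form $\alpha_3+\alpha_4+\beta$ with $\beta \in \Pi$) then frees $A$ and $B$ to be ordered consistently by case (a).

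The hard part is establishing uniformly that every non-simple positive root admits one of the two cases. A uniform combinatorial argument---phrased in terms of root strings, Cartan integers, and the graph structure of the Dynkin diagram---would be ideal, but a classification-based verification through the irreducible types ($A_n, B_n, C_n, D_n, E_6, E_7, E_8, F_4, G_2$) will also suffice. In type $A_n$ the singleton form of case (a) is always available, so no ordering issues arise there. The residual difficulties occur at low heights in the branched types $D$ and $E$ (and a handful of roots in $F_4$ and $G_2$), and these can be pinned down explicitly by inspecting the well-known descriptions of the positive roots in each type.
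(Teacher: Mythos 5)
Your overall strategy matches the paper's: order primarily by height, fix the within-height ordering using the constraints generated by case~(a), and resort to case~(b) precisely where those constraints cannot be satisfied; the verification is then carried out type by type through the classification of irreducible root systems. Your $D_4$ example is correct, and it isolates exactly the obstruction that forces case~(b) into the statement.

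There are, however, two issues. First, a concrete error: you assert that in type $A_n$ the singleton form of case~(a), namely $C(\mu)=\{\lambda\}$, is always available, so that no within-height ordering constraints arise. This is false. For $\lambda=\lambda_i+\cdots+\lambda_j$ with $1<i<j<n$, the only candidates are $\mu=\lambda_{i+1}+\cdots+\lambda_j$, with $C(\mu)=\{\lambda,\ \lambda_{i+1}+\cdots+\lambda_{j+1}\}$, and $\mu=\lambda_i+\cdots+\lambda_{j-1}$, with $C(\mu)=\{\lambda,\ \lambda_{i-1}+\cdots+\lambda_{j-1}\}$; neither is a singleton. Constraints are unavoidable even in type $A$, and the paper resolves them by ordering each height lexicographically (taking $\lambda_1>\cdots>\lambda_n$) and always choosing $\mu=\lambda_i+\cdots+\lambda_{j-1}$. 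Second, and more seriously, once the argument becomes classification-based the content of the proof \emph{is} the verification, and you never carry it out. The paper gives explicit choices of $\mu$ or $(\mu_1,\mu_2)$ for every family of roots in types $B_n,C_n,D_n$ and devotes its entire appendix to tables for $E_6,E_7,E_8,F_4$ (where case~(b) is required at heights as large as $15$, not merely ``low heights''); your description of where the difficulties lie is also off, since in $G_2$ there is exactly one root in each height $>1$, so no case-(b) intervention is needed there at all. Without the explicit tables or a uniform argument replacing them, the lemma is not actually established by your proposal.
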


\begin{remark}
Case \ref{lemma:simple unipotent mu} corresponds to conjugation by something in $\exp (\Lie{g}_{\mu})$, see Lemma \ref{lem:conjugating by mu}. Case \ref{lemma:simple unipotent mu1 mu2} corresponds to conjugating by a commutator as in Lemma \ref{lem:conjugating by commutator}. This Lemma, combined with Lemmas \ref{lem:conjugating by mu} and \ref{lem:conjugating by commutator}, tells us that we can always conjugate $u \in N$ by an element of $N$ in such a way that we can choose the smallest entry of $u$ which is affected by the conjugation.
\end{remark}

\begin{proof}[Proof of Lemma \ref{lemma:simple unipotent}] Before we proceed, note that if we find $\mu_1,\mu_2$ satisfying \ref{lemma:simple unipotent mu1 mu2} but $\mu_1+\mu_2$ is a root, then case \ref{lemma:simple unipotent mu} also applies.

Suppose that $\Lambda$ is the sum of irreducible root systems $\Lambda_1 , \ldots , \Lambda_r$ and that $\Lambdaplus = \Lambdaplus_1 \cup \ldots \cup \Lambdaplus_r$. Suppose also that on each $\Lambdaplus_i$ we have an ordering $<_i$ which satisfies the Lemma. Then we can define an ordering $<$ on $\Lambdaplus$ given by $\lambda < \mu$ if and only if
\begin{enumerate}
\item $\lambda \in \Lambdaplus_i$ and $\mu \in \Lambdaplus_j$ such that $i<j$; or
\item if $\lambda,\mu \in \Lambdaplus_i$ for some $i$ then $\lambda <_i \mu$.
\end{enumerate}
If $\lambda$ and $\mu$ are in different irreducible root systems inside $\Lambda$, then $\lambda+\mu$ cannot be a root. Hence it follows that if $<_i$ satisfies the Lemma for each $i$, then so does $<$. Thus it suffices to check the conditions of the Lemma for each irreducible root system.

In the classical root systems $A_n,B_n,C_n,D_n$, we make the base assumption that the simple roots are $\Pi=\{\lambda_1,\ldots ,\lambda_n\}$ and are ordered by $\lambda_1>\lambda_2>\ldots>\lambda_n$. Note that because we will assume that the Lie algebra is split we do not need to consider $BC_n$ root systems.

\scheading{Root systems of type $A_{n}$}
This is the root system associated to $\SLnZ$ so we expect this to be straightforward. The non-simple positive roots will be sums of consecutive simple roots:
		$$\lambda_{i} + \lambda_{i+1} + \ldots + \lambda_{j}$$
for $1\leq i<j \leq n$. The ordering we assign is in two steps: primarily we order by height, then within each height we order the elements lexicographically. So if $\lambda = \lambda_{i} + \ldots + \lambda_{j}$ then we take $\mu = \lambda_{i} + \ldots + \lambda_{j-1}$. It follows that:
$$
\lbrace \mu \rbrace + \Pi = \left\{ \begin{array}{ll}
  \lbrace\lambda,\lambda_{i-1}+\ldots+\lambda_{j-1}\rbrace &\mbox{ if $i\neq 1$} \\
  \lbrace\lambda\rbrace &\mbox{ if $i=1$}
       \end{array} \right.
$$
and hence our chosen ordering satisfies the requirements of the lemma.

\scheading{Root systems of type $B_{n}$}
The non-simple positive roots are of the following forms:
$$
\begin{array}{ll}
\lambda_{i} + \ldots + \lambda_{j} & \textrm{for }1\leq i<j\leq n \\
\lambda_{i} + \ldots + \lambda_{j-1} + 2\lambda_{j} + \ldots + 2\lambda_{n} & \textrm{for }1 \leq i<j \leq n
\end{array}
$$
We order the roots as we did for type $A_{n}$: first by height, then order the elements of each height lexicographically. If we first take $\lambda$ of the first form listed above, i.e.\ $\lambda = \lambda_{i} + \ldots + \lambda_{j}$. Then we take $\mu = \lambda_{i} + \ldots + \lambda_{j-1}$ and observe that:
$$
\lbrace \mu \rbrace + \Pi = \left\{ \begin{array}{ll}
  \lbrace\lambda,\lambda_{i-1}+\ldots+\lambda_{j-1}\rbrace &\mbox{ if $i\neq 1$} \\
  \lbrace\lambda\rbrace &\mbox{ if $i=1$}
       \end{array} \right.
$$
satisfies the required conditions. If on the other hand we consider 
$$\lambda = \lambda_{i} + \ldots + \lambda_{j-1} + 2\lambda_{j} + \ldots + 2\lambda_{n}$$
then we take
$$
\mu = \left\{ \begin{array}{ll}
   \lambda_{i} + \ldots + \lambda_{j} + 2\lambda_{j+1} + \ldots + 2\lambda_{n} & \textrm{if } j \neq n \\
   \lambda_{i} + \ldots + \lambda_{n} & \textrm{if }j=n
   \end{array} \right.
$$
and observe that:
$$
\lbrace \mu \rbrace + \Pi = \left\{ \begin{array}{ll}
  \lbrace\lambda, \lambda_{i-1} + \ldots + \lambda_{j} + 2\lambda_{j+1} + \ldots + 2\lambda_{n} \rbrace &\mbox{ if $i\neq 1,j\neq n$} \\
  \lbrace \lambda , \lambda_{i-1} + \ldots + \lambda_{n} \rbrace &\mbox{ if $i\neq 1,j= n$} \\
  \lbrace\lambda\rbrace &\mbox{ if $i=1$}
       \end{array} \right.
$$
satisfies the requirements for every choice of $i,j$.

\scheading{Root systems of type $C_{n}$}
The positive non-simple roots have one of the following forms:
		$$
		\begin{array}{ll}
		\lambda_{i} + \ldots + \lambda_{j} & \textrm{for }1\leq i<j\leq n \\
		2\lambda_{i} + \ldots + 2\lambda_{n-1} + \lambda_{n} & \textrm{for }1 \leq i \leq n-1 \\
		\lambda_{i} + \ldots + \lambda_{j-1} + 2\lambda_{j} + \ldots + 2\lambda_{n-1} + \lambda_{n} & \textrm{for }1 \leq i<j \leq n-1
		\end{array}
		$$
We order these first by height, then order the elements of the same height by lexicographic ordering. We now give the choice for $\mu$ in each case.

First, for $1\leq i < j \leq n$, let $\lambda=\lambda_{i} + \ldots + \lambda_{j}$. Then we take $\mu = \lambda_{i} + \ldots + \lambda_{j-1}$ and we have:
$$
\lbrace \mu \rbrace + \Pi = \left\{ \begin{array}{ll}
   \lbrace \lambda , \lambda_{i-1} + \ldots + \lambda_{j-1} \rbrace & \mbox{ if $i \neq 1$} \\
   \lbrace \lambda \rbrace & \mbox{ if $i = 1$}
      \end{array} \right.
$$
Under our chosen ordering these satisfy the requirements of the Lemma.

Second, for $1\leq i \leq n-1$, let $\lambda=2\lambda_{i} + \ldots + 2\lambda_{n-1} + \lambda_{n}$. Then we take $\mu=\lambda_{i} + 2\lambda_{i+1} + \ldots + 2\lambda_{n-1} + \lambda_{n}$ if $i \neq n-1$ or $\mu = \lambda_{n-1} + \lambda_{n}$ if $i=n-1$ and we have:
$$
\lbrace \mu \rbrace + \Pi = \left\{ \begin{array}{ll}
   \lbrace \lambda , \lambda_{i-1} + \lambda_{i} + 2\lambda_{i+1} \ldots + 2\lambda_{n-1} + \lambda_{n} \rbrace & \mbox{ if $i \neq n-1$ and $i \neq 1$} \\
   \lbrace \lambda, \lambda_{n-2}+\lambda_{n-1}+\lambda_{n} \rbrace & \mbox{ if $i = n-1$}\\
   \lbrace \lambda \rbrace & \mbox{ if $i = 1$}
      \end{array} \right.
$$
In each case the elements of $\lbrace \mu \rbrace + \Pi$ are at least as big as $\lambda$ in our chosen ordering, so the Lemma is satisfied in this case.

Finally, for $1 \leq i < j \leq n-1$, let $\lambda = \lambda_{i} + \ldots + \lambda_{j-1} + 2\lambda_{j} + \ldots + 2\lambda_{n-1} + \lambda_{n}$. We take $\mu = \lambda_{i} + \ldots + \lambda_{j} + 2\lambda_{j+1} + \ldots + 2\lambda_{n-1} + \lambda_{n}$ if $j\neq n-1$ or $\mu = \lambda_{i} + \ldots + \lambda_{n}$ when $j=n-1$. Then:
$$
\lbrace \mu \rbrace + \Pi = \left\{ \begin{array}{ll}
   \lbrace \lambda , \lambda_{i-1} + \ldots + \lambda_{j} + 2\lambda_{j+1} \ldots + 2\lambda_{n-1} + \lambda_{n} \rbrace & \mbox{ if $j \neq n-1$ and $i \neq 1$} \\
   \lbrace \lambda, \lambda_{i-1}+ \ldots+\lambda_{n} \rbrace & \mbox{ if $j = n-1$ and $i \neq 1$}\\
   \lbrace \lambda \rbrace & \mbox{ if $i=1$}
      \end{array} \right.
$$
The requirements of the Lemma are satisfied in each case, and hence it follows that the Lemma holds for root systems of type $C_{n}$.

\scheading{Root systems of type $D_{n}$}
The non-simple positive roots in the root system $D_{n}$ are of one of the following two types:
$$
\begin{array}{ll}
   \lambda_{i} + \ldots + \lambda_{j-1} & \mbox{ if $1\leq i < j \leq n$} \\
   \lambda_{i} + \ldots + \lambda_{n-2} + \lambda_{j} + \ldots + \lambda_{n} & \mbox{ if $ 1 \leq i < j \leq n$}
\end{array}
$$
Apply the same ordering to $D_{n}$ as we applied to each of the preceding root systems: first order by height, then order the elements of the same height lexicographically. In most instances we are able to satisfy the conditions of the Lemma by choosing a single $\mu \in \Lambdaplus$. However there are some for which we must use the second allowable case, namely find two positive roots $\mu_{1} , \mu_{2}$ to satisfy the Lemma.

We first suppose $\lambda = \lambda_{i}+\ldots +\lambda_{j-1}$ where $1 \leq i < j < n$. Then we take $\mu = \lambda_{i} + \ldots \lambda_{j-2}$ and observe:
$$
\lbrace \mu \rbrace + \Pi = \left\{ \begin{array}{ll}
   \lbrace \lambda , \lambda_{i-1} + \ldots + \lambda_{j-2} \rbrace & \mbox{ if $i \neq 1$}\\
   \lbrace \lambda \rbrace & \mbox{ if $i=1$} 

\end{array} \right.
$$
Hence the conditions of the Lemma are satisfied in each case.

For $1\leq i<j \leq n$ let $\lambda = \lambda_{i}+\ldots+\lambda_{n-2}+\lambda_{j}+\ldots + \lambda_{n}$. First assume $i \neq n-2$ and $j \neq n-1,n$. If we take $\mu = \lambda_{i} + \ldots + \lambda_{n-2} + \lambda_{j+1} + \ldots + \lambda_{n}$ then:
$$
\lbrace \mu \rbrace + \Pi = \left\{ \begin{array}{ll}
   \lbrace \lambda , \lambda_{i-1} + \ldots + \lambda_{n-2} + \lambda_{j+1} + \ldots + \lambda_{n} & \mbox{ if $i \neq 1$}\\
   \lbrace \lambda \rbrace & \mbox{ if $i=1$}
\end{array} \right.
$$
and the Lemma is satisfied.

Now suppose $j=n$, then $\lambda = \lambda_{i} + \ldots + \lambda_{n-2} + \lambda_{n}$. Take $\mu = \lambda_{i} + \ldots + \lambda_{n-2}$ then:
$$
\lbrace \mu \rbrace + \Pi = \left\{ \begin{array}{ll}
   \lbrace \lambda , \lambda_{i-1} + \ldots + \lambda_{n-2}, \lambda_{i} + \ldots +\lambda_{n-1} \rbrace & \mbox{ if $i \neq 1$}\\
   \lbrace \lambda , \lambda_{1} + \ldots + \lambda_{n-1} \rbrace & \mbox{ if $i=1$}
\end{array} \right.
$$
and our choice of $\mu$ here satisfies the requirements of the Lemma.

We are left with the cases when $\lambda = \lambda_{i} + \ldots + \lambda_{n-1}$ and when $\lambda = \lambda_{n-2} + \lambda_{n-1}+ \lambda_{n}$. In the former case we take $\mu_{1} = \lambda_{i} + \ldots + \lambda_{n-3}$ and $\mu_{2} = \lambda_{n-1}$ and observe the only way to make a root by adding $\mu_{1}, \mu_{2}$ and a simple root together is if the simple root is $\lambda_{n-2}$, thus giving $\lambda$. Hence $\lbrace \mu_{1} \rbrace + \lbrace \mu_{2} \rbrace + \Pi = \lbrace \lambda \rbrace$. In the latter case we take $\mu_{1} = \lambda_{n-1}$ and $\mu_{2} = \lambda_{n}$. Similarly, since the only simple root which we can add to $\mu_{1} + \mu_{2}$ and still have a root is $\lambda_{n-2}$, we have $\lbrace \mu_{1} \rbrace + \lbrace \mu_{2} \rbrace + \Pi = \lbrace \lambda \rbrace$.

This completes the verification of the Lemma in the case when the root system is of type $D_{n}$.

\scheading{Root systems of type $E_{6}, E_{7}, E_{8},F_4,G_2$}
These are dealt with in the appendix. For root systems $E_8$ and $F_4$ a table is produced with an example of an ordering satisfying the Lemma. They also give suitable choices of $\mu$ or of $\mu_1$ and $\mu_2$ for each non-simple positive root. Table \ref{table:E_8} gives the ordering for $E_8$, and hence for $E_7$ and $E_6$ by using the induced ordering. Table \ref{table:F_4} gives the ordering for $F_4$. Figures \ref{fig:E_8}, \ref{fig:F_4} and \ref{fig:G_2} provide a visual method of checking in each case that the given root $\mu$ satisfies the requirements: given $\mu \in \Lambdaplus$ one can quickly see what $\lbrace \mu \rbrace + \Pi$ will be by following all edges heading down the page from $\mu$ to the row below.

When dealing with $G_2$, there is only one root of each height strictly greater than $1$, hence we can order the roots by height alone.
\end{proof}

\section{Construction of a short conjugator}\label{sec:unipotent:constructing a conjugator}

\subsection{Reduction of the simple case}

From here on in we will assume that $\Lie{g}$ is a split real Lie algebra, meaning that the root spaces $\Lie{g}_\lambda$ are $1$--dimensional. We first give an algorithm to reduce $u \in N$, all of whose simple entries are non-zero, to  $u' \in N$, all of whose non-simple entries are zero and the simple entries of $u'$ are equal to those of $u$. Write $u$ in terms of the elements from the root-spaces of $\Lie{g}$:
		$$u = \exp \left( \sum_{\lambda \in \Lambdaplus} Y_\lambda \right)$$
where $Y_\lambda \in \Lie{g}_\lambda$ for each $\lambda \in \Lambdaplus$. Assign to $\Lambdaplus$ the ordering from Lemma \ref{lemma:simple unipotent}. The algorithm is based on an iteration of the following result:

\begin{lemma}\label{lemma:simple one step}
Let $\lambda_0$ be the smallest non-simple root such that $Y_{\lambda_0}$ is non-zero. Then there exists $g \in N$ and a positive constant $c_0 >0$ such that:
\begin{enumerate}[label=(\roman*)]
		\item \label{lemma:simple one step exists} the $\lambda_0$--entry of $gug^{-1}$ is zero and all entries corresponding to smaller roots are unchanged; and
		\item \label{lemma:simple one step size} $d_G (1,g) \leq \frac{\norm{Y_{\lambda_0}}}{c_0\delta}$, where $\delta = \min \lbrace \norm{Y_{\lambda_i}} \mid \lambda_i \in \Pi \rbrace$. 
\end{enumerate} 
\end{lemma}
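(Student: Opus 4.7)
My plan is to apply Lemma \ref{lemma:simple unipotent} to $\lambda_0$, which gives either (a) a root $\mu$ with $\lambda_0 - \mu \in \Pi$ and $\lambda_0$ minimal in $\{\mu\}+\Pi$, or (b) roots $\mu_1, \mu_2$ with $\{\mu_1\}+\{\mu_2\}+\Pi = \{\lambda_0\}$. I will construct $g$ explicitly from elements of one or two root spaces, in each case solving a single linear equation in $\Lie{g}_{\lambda_0}$.

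In case (a), I take $g = \exp(Z_\mu)$ with $Z_\mu \in \Lie{g}_\mu$ to be determined. By Lemma \ref{lem:conjugating by mu}, the only entries affected by this conjugation correspond to roots of the form $r\mu + \lambda'$ with $r \geq 1$. Because the ordering on $\Lambdaplus$ is primarily by height, every such root has height at least $\hgt(\lambda_0)$, and at this height the affected roots all lie in $\{\mu\}+\Pi$, where $\lambda_0$ is minimal by the choice from Lemma \ref{lemma:simple unipotent}; hence no entry smaller than $\lambda_0$ is touched. A height count excludes the terms $(\ad Z_\mu)^r Y_{\lambda'}$ for $r \geq 2$ (since $\lambda_0 - r\mu$ would have non-positive height), so the new $\lambda_0$-entry equals $Y_{\lambda_0} + [Z_\mu, Y_\alpha]$ with $\alpha = \lambda_0 - \mu \in \Pi$. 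Splitness of $\Lie{g}$ makes $\ad(Y_\alpha)$ a non-zero linear map between the one-dimensional spaces $\Lie{g}_\mu$ and $\Lie{g}_{\lambda_0}$, so a unique $Z_\mu$ solves $[Z_\mu, Y_\alpha] = -Y_{\lambda_0}$. Proposition \ref{prop:Lie product size} then gives $\norm{Z_\mu} \leq \norm{Y_{\lambda_0}}/(c_0 \norm{Y_\alpha}) \leq \norm{Y_{\lambda_0}}/(c_0 \delta)$, and since the curve $t \mapsto \exp(tZ_\mu)$ has Riemannian length $\norm{Z_\mu}$, we obtain $d_G(1,g) \leq \norm{Z_\mu}$.

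In case (b) the guiding idea is to take $g = [\exp Z_1, \exp Z_2]$ with $Z_i \in \Lie{g}_{\mu_i}$. Lemma \ref{lem:conjugating by commutator}, combined with $\{\mu_1\}+\{\mu_2\}+\Pi = \{\lambda_0\}$, ensures that every entry of height at most $\hgt(\lambda_0)$ other than the $\lambda_0$-entry is preserved. A Jacobi-identity manipulation of the formula in Lemma \ref{lem:conjugating by commutator} should collapse the new $\lambda_0$-entry to $Y_{\lambda_0} + [[Z_1,Z_2],Y_\alpha]$ with $\alpha$ the unique simple root satisfying $\mu_1+\mu_2+\alpha = \lambda_0$; solving this and applying Proposition \ref{prop:Lie product size} twice (once to $[Z_1,Z_2]$ and once to $[\cdot, Y_\alpha]$), after balancing $\norm{Z_1}$ and $\norm{Z_2}$, yields the length estimate. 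The main obstacle is that, within case (b), the decomposition of Lemma \ref{lemma:simple unipotent} only applies when $\mu_1+\mu_2$ fails to be a root, so the naive commutator $[Z_1,Z_2]$ vanishes and the construction above collapses; handling this degeneracy requires a more elaborate conjugator, for instance a product drawn from $\exp(\Lie{g}_{\mu_1 + \alpha})$ and $\exp(\Lie{g}_{\mu_2})$ tuned so that the unwanted first-order contributions to roots in $\{\mu_1+\alpha\}+\Pi \setminus \{\lambda_0\}$ cancel by means of the constraint $\{\mu_1\}+\{\mu_2\}+\Pi = \{\lambda_0\}$. This delicate cancellation, rather than the construction of case (a), is the heart of the lemma.
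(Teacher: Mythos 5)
Your treatment of case (a) matches the paper's: same conjugator $g = \exp(Z_\mu)$, same equation $[Z_\mu, Y_\alpha]=-Y_{\lambda_0}$ solved by one-dimensionality of the root space, same application of Proposition \ref{prop:Lie product size} for the size bound. So far so good.

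For case (b) you again start on the paper's path: conjugate by the group commutator $[\exp Z_1, \exp Z_2]$, compute via Lemma \ref{lem:conjugating by commutator}, and reduce the $\lambda_0$-entry to $Y_{\lambda_0} + [[Z_1,Z_2],Y_\alpha]$ (the paper writes $[[Z_2,Z_1],Y_{\lambda_i}]+Y_{\lambda_0}$, the same thing up to sign conventions in $Z_1,Z_2$). You then observe that under the hypothesis of case (b) of Lemma \ref{lemma:simple unipotent} — namely $\mu_1+\mu_2 \notin \Lambda$ — the bracket $[Z_1,Z_2]$ lies in $\Lie{g}_{\mu_1+\mu_2}=0$, forcing $[Z_1,Z_2]=0$, so $\exp Z_1$ and $\exp Z_2$ commute and the conjugator is the identity. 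This worry is correct as a matter of Lie theory. It is worth saying plainly that the paper's proof does not engage with it: the paper writes ``by choosing $Z_1$ and $Z_2$ so that $[[Z_2,Z_1],Y_{\lambda_i}]=-Y_{\lambda_0}$'', yet when $\mu_1+\mu_2$ is not a root this bracket is identically zero and no such choice exists; the accompanying estimate $d_G(1,g)=\norm{[Z_1,Z_2]}$ is then vacuously $0$. And in $D_n$ the paper does invoke case (b) with $\mu_1+\mu_2$ not a root (e.g.\ $\mu_1=\lambda_{n-1}$, $\mu_2=\lambda_n$), so the problematic branch is genuinely exercised. So your concern is not a misreading of the paper but appears to pinpoint a real defect in its argument.

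That said, your own proposal does not resolve the issue either. The alternative conjugator you sketch — a product of something in $\exp(\Lie{g}_{\mu_1+\alpha})$ and something in $\exp(\Lie{g}_{\mu_2})$ — is not constructed, the claimed cancellation of the contributions to roots in $\{\mu_1+\alpha\}+\Pi\setminus\{\lambda_0\}$ is not verified, and no size estimate is derived for it. Moreover $\lambda_0 - (\mu_1+\alpha)=\mu_2$ need not be simple, so this is not simply case (a) in disguise and one would still need to control the collateral damage on smaller roots. As written, your proof establishes case (a) and correctly identifies that the paper's case (b) mechanism cannot work as stated, but it leaves case (b) unproved. A complete argument must either produce and estimate a genuine replacement conjugator for case (b), or reorganize Lemma \ref{lemma:simple unipotent} (for instance by changing the ordering on $\Lambdaplus$) so that case (b) is never needed.
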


\begin{proof}
We begin by applying Lemma \ref{lemma:simple unipotent} to $\lambda_0$. This gives us either:
\begin{enumerate}[label=(\alph*)]
		\item\label{pf:simple one step mu} $\mu \in \Lambdaplus$ such that $\lambda_0$ is minimal in $\{\mu \} + \Pi$; or
		\item\label{pf:simple one step mu1 mu2} $\mu_1 , \mu_2 \in \Lambdaplus$ such that $\{ \mu_1\} + \{\mu_2 \} + \Pi = \{\lambda_0\}$ and $\mu_1+\mu_2$ is not a root.
\end{enumerate}
First suppose \ref{pf:simple one step mu} holds. Take $g = \exp \left( Z_\mu \right)$ where $Z_\mu \in \Lie{g}_\mu$ is chosen so that
$$\left[ Z_\mu , Y_{\lambda_i} \right] = -Y_{\lambda_0}
$$
where $\lambda_i$ is the simple root such that $\mu + \lambda_i = \lambda_0$. By Lemma \ref{lem:conjugating by mu}, the $\lambda_0$--entry of $gug^{-1}$ is, by construction,
$$Y_{\lambda_0} + \ad(Z_\mu)Y_{\lambda_i}=0
$$
and the other affected entries are of the form $r\mu+\lambda$ for some $\lambda \in \Lambdaplus$. All of these are larger than $\lambda_0$ in the ordering from Lemma \ref{lemma:simple unipotent}, hence the first part of the lemma is proved when case \ref{pf:simple one step mu} holds.

Now suppose that instead case \ref{pf:simple one step mu1 mu2} holds. Then we take $g = \left[ \exp(Z_1) , \exp(Z_2) \right]$ where $Z_i \in \Lie{g}_{\mu_i}$ for $i = 1,2$. By Lemma \ref{lem:conjugating by commutator}, the $\lambda_0$--entry of $gug^{-1}$ is
$$\sum_{(r+t)\mu_1 + (s+u)\mu_2 + \lambda' = \lambda_0} \frac{\ad(-Z_2)^u \ad(-Z_1)^t \ad (Z_2)^s \ad (Z_1)^r Y_\lambda'}{r!s!t!u!}
$$
where the summation takes non-negative integers $r,s,t,u$ and positive roots $\lambda'$. Since $Y_\lambda=0$ for non-simple roots $\lambda < \lambda_0$, there is no other way to obtain a non-zero term in the sum except by either taking $r=s=t=u=0$ and $\lambda'=\lambda_0$ or with $\lambda' = \lambda_i \in \Pi$ such that $\mu_1 + \mu_2 + \lambda_i = \lambda_0$. In the latter case we know $r+t = 1 = s+u$. Hence there are only finitely many combinations to consider and the $\lambda_0$ entry becomes:
\begin{eqnarray*}
& \ad (Z_2) \ad (Z_1) Y_{\lambda_i} + \ad (-Z_2) \ad (Z_1) Y_{\lambda_i} +\\
& \ad (-Z_1) \ad (Z_2) Y_{\lambda_i} + \ad (-Z_2) \ad (-Z_1) Y_{\lambda_i} + Y_{\lambda_{0}}
\end{eqnarray*}
which simplifies to
$$\ad (Z_2) \ad (Z_1) Y_{\lambda_i} - \ad (Z_1) \ad (Z_2) Y_{\lambda_i} + Y_{\lambda_0}
\textrm{.}$$
Finally, by application of the Jacobi identity, we see this is equal to
$$[[Z_2,Z_1],Y_{\lambda_i}]+Y_{\lambda_0}
\textrm{.}$$
Hence, by choosing $Z_1$ and $Z_2$ so that $\left[[Z_2 , Z_1 ], Y_{\lambda_i}\right]=-Y_{\lambda_0}$, the $\lambda_0$--entry of $gug^{-1}$ is zero.

Finally, Lemma \ref{lem:conjugating by commutator} tells us that entries corresponding to roots of height less than or equal to $\hgt{\mu_1}+\hgt{\mu_2}$ are unchanged. Since also $\{\mu_1\}+\{\mu_2\}+\Pi=\{\lambda_0\}$, all entries corresponding to roots smaller than $\lambda_0$ are unaffected. Thus we have proved \ref{lemma:simple one step exists}.

Note that we have the flexibility to choose $Z_\mu , Z_1$ and $Z_2$ as above because each root-space has dimension one so we only need to choose the appropriate scalar multiple of a basis element to get what we want.

Now we look at the size of $g$. If $g=\exp (Z_\mu )$ arises from a situation like \ref{pf:simple one step mu} then, since we chose $Z_\mu$ to satisfy $\left[ Z_\mu , Y_{\lambda_i} \right] = -Y_{\lambda_0}$, we can use Proposition \ref{prop:Lie product size} to show:
		$$d_G (1,g)  =  \norm{Z_\mu} \leq  \frac{\norm{Y_{\lambda_0}}}{c_0\norm{Y_{\lambda_i}}}  \leq  \frac{\norm{Y_{\lambda_0}}}{c_0\delta}.$$
Suppose instead that $g=[\exp (Z_1),\exp(Z_2)]$, as is necessary for case \ref{pf:simple one step mu1 mu2}. Using the Baker--Campbell--Hausdorff formula, $g=\exp([Z_1,Z_2])$ since $\mu_1+\mu_2$ is a not a root. Then, again using Proposition \ref{prop:Lie product size} and our choice of $Z_1,Z_2$ such that $\left[[Z_2 , Z_1 ], Y_{\lambda_i}\right]=-Y_{\lambda_0}$, we see that:
		$$ d_G(1,g) =  \norm{[Z_1,Z_2]} \leq \frac{\norm{Y_{\lambda_0}}}{c_0\norm{ Y_{\lambda_i}}} \leq \frac{\norm{Y_{\lambda_0}}}{c_0\delta}.$$
This completes \ref{lemma:simple one step size}.
\end{proof}

The following algorithm describes a process by which, in the simple case, we can reduce $u \in N$ to $u' \in N$, where $u'$ has no non-simple entries.

\begin{algorithm}\label{alg:reduction to superdiagonal simple case}
Let $u \in N$ be given by
		$$u = \exp \left( \sum_{\lambda \in \Lambdaplus} Y_\lambda \right), \ \ Y_\lambda \neq 0 \textrm{ for } \lambda \in \Pi.$$
We define a sequence of elements $u(i) \in N$ where $u(0)=u$ and $u(i+1)$ has one fewer non-zero non-simple entry than $u(i)$ and is obtained by
		$$u(i):=g(i)u(i-1)g(i)^{-1}, \textrm{ for $i \geq 1$}$$
where $g(i)$ is determined by Lemma \ref{lemma:simple one step}. This process clearly terminates as $\Lambdaplus$ is a finite set. Let $g(1),\ldots,g(r)$ be the complete set of conjugators obtained. Define $g := g(r) \ldots g(1)$. Then $u':=u(r)=gug^{-1}$, which has no non-zero non-simple entries.
\end{algorithm}

\subsection{The collateral damage of Algorithm \ref{alg:reduction to superdiagonal simple case}}

Suppose that the simple entries of $u$ are bounded away from zero. In particular, define a function $\Delta : N \to [0,\infty)$ by
		$$\Delta(u)=\min \{\norm{Y_{\lambda_i}}\mid \lambda_i \in \Pi \}$$
and suppose there exists some $\delta >0$ such that $\Delta(u) \geq \delta$. Note that $\Delta$ can be extended to all unipotent elements of $G$. This function measures, in some vague sense, the distance of $u$ from the simple root-spaces of $\Lie{g}$.

Before determining the size of a short conjugator in $G$ we need to determine the effect each step of Algorithm \ref{alg:reduction to superdiagonal simple case} has on the entries of $u$. This is a notion we described in Section \ref{sec:unipotent:outline} as collateral damage. We showed in Lemmas \ref{lem:conjugating by mu} and \ref{lem:conjugating by commutator} that while removing the $\lambda_0$ entry of $u$ it was possible that some of the entries of greater height could be altered in the process. We will call those entries affected by one of the steps of Algorithm \ref{alg:reduction to superdiagonal simple case}, other than the intended target entry, the \emph{collateral damage} of this step.

In general we expect collateral damage. We can, nonetheless, use an iterative method, bounding the size of each $u(i)$ in the sequence. By applying Lemmas \ref{lemma:simple one step} and \ref{lemma:unipotent entry bounded by u} we see that the first conjugator $g(1)$ will satisfy
		\begin{equation}
		\label{eq:collateral damage first step} d_G(1,g(1))\leq \frac{1}{c_0 \delta}d_G(1,u).
		\end{equation}
The collateral damage of conjugating $u$ by $g(1)$ includes elements of height greater than that of the smallest non-simple non-zero entry of $u$. Suppose $g(1),\ldots ,g(t_1)$ correspond to the steps to remove all entries of height $2$. Since conjugating by any of these will not effect any height $2$ entry of $u$, each $g(i)$, for $1 \leq i \leq t_1$, will satisfy inequality (\ref{eq:collateral damage first step}) in place of $g(1)$. Let $g_{\mathrm{ht}}(2)=g(t_1)\ldots g(1)$. Then
		$$d_G(1,g_{\mathrm{ht}}(2))\leq \frac{R_2}{c_0 \delta}d_G(1,u)$$
where $R_2$ is equal to the number of roots of height $2$. After the first $t_1$ steps of Algorithm \ref{alg:reduction to superdiagonal simple case} we obtain an element $u_{\mathrm{ht}}(2)=g_{\mathrm{ht}}(2)ug_{\mathrm{ht}}(2)^{-1}$ whose entries of height $2$ are all zero. Furthermore, by the triangle inequality
		$$d_G(1,u_{\mathrm{ht}}(2)) \leq \left( \frac{2R_2}{c_0 \delta} + 1\right) d_G(1,u).$$
Suppose the $\lambda$--entry of $u_{\mathrm{ht}}(2)$ is $Y_\lambda^{(2)}$. Then by Lemma \ref{lemma:unipotent entry bounded by u}
		$$\norm{Y_\lambda^{(2)}} \leq \left( \frac{2R_2}{c_0 \delta} + 1\right) d_G(1,u).$$
By Lemma \ref{lemma:simple one step}, the size of the next conjugator will be bounded above:
		$$d_G(1,g(t_1 +1))\leq \frac{1}{c_0 \delta}\left( \frac{2R_2}{c_0 \delta} + 1\right) d_G(1,u)$$
noting that we can still use $\delta$ as defined above since the simple entries of $u_{\mathrm{ht}}(2)$ are exactly those of $u$.  Let $g_{\mathrm{ht}}(3)=g(t_2)\ldots g(t_1 +1)$, where $g(t_1+1),\ldots ,g(t_2)$ are those conjugators from Algorithm \ref{alg:reduction to superdiagonal simple case} corresponding to the removal of height $3$ entries of $u$. Then, as in the height $2$ case, we get
		$$d_G(1,g_\mathrm{ht}(3) )\leq \frac{R_3}{c_0 \delta}\left( \frac{2R_2}{c_0 \delta} + 1\right) d_G(1,u)$$
where $R_3$ is the number of roots of height $2$. Then $u_\mathrm{ht}(3)=g_\mathrm{ht}(3)u_\mathrm{ht}(2)g_\mathrm{ht}(3)^{-1}$ has no entries of height $2$ or $3$, and it satisfies
		$$d_G(1,u_{\mathrm{ht}}(3)) \leq \left(\frac{2R_3}{c_0 \delta} +1 \right)\left( \frac{2R_2}{c_0 \delta} + 1\right) d_G(1,u).$$
Continuing in this way, if $r$ is the greatest height of a root in $\Lambdaplus$, then for each $2\leq i \leq r$ we have
		$$d_G(1,g_\mathrm{ht}(i)) \leq \frac{R_i}{c_0\delta}\prod\limits_{j=2}^{i-1} \left(\frac{2R_j}{c_0\delta}+1\right) d_G(1,u).$$
Let $g=g_\mathrm{ht}(r) \ldots g_\mathrm{ht}(2)$. Then $g$ is the element obtained from Algorithm \ref{alg:reduction to superdiagonal simple case} and conjugates $u$ to an element $u'$ whose non-simple entries are all zero, while its simple entries are the same as for $u$. Finally, we see that the size of $g$ is bounded linearly by the size of $u$:
\begin{prop}\label{prop:simple to super-diagonal conjugator is bounded}
Let $g$ be the conjugator obtained by Algorithm \ref{alg:reduction to superdiagonal simple case} such that the non-simple entries of $gug^{-1}$ are all zero. Then
		$$d_G(1,g) \leq K(\delta) d_G(1,u)$$
where
		$$K(\delta)= \sum\limits_{i=2}^r \frac{R_i}{c_0\delta}\prod\limits_{j=2}^{i-1} \left(\frac{2R_j}{c_0\delta}+1\right).$$
\end{prop}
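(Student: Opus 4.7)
The plan is to argue by induction on height, tracking two quantities simultaneously: the size of the partial conjugator $g_{\mathrm{ht}}(i)$ produced after all entries of height strictly less than $i$ have been eliminated, and the size of the intermediate unipotent element $u_{\mathrm{ht}}(i) := g_{\mathrm{ht}}(i) \cdots g_{\mathrm{ht}}(2) \, u \, (g_{\mathrm{ht}}(i) \cdots g_{\mathrm{ht}}(2))^{-1}$. The observation that makes this work is that by Lemma~\ref{lemma:simple unipotent} and Lemmas~\ref{lem:conjugating by mu}--\ref{lem:conjugating by commutator}, the conjugator chosen to kill a given non-simple entry never disturbs any simple entry and never disturbs any entry of smaller height. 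So at every stage the quantity $\delta$ remains a valid lower bound for all simple entries of the current intermediate unipotent.

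First I would record the base case: by Lemma~\ref{lemma:unipotent entry bounded by u}, every entry of $u$ has norm at most $d_G(1,u)$, so Lemma~\ref{lemma:simple one step}(ii) gives $d_G(1, g(i)) \leq d_G(1,u)/(c_0\delta)$ for each of the $R_2$ conjugators that kill the height-$2$ entries. Summing via the triangle inequality yields the bound on $d_G(1, g_{\mathrm{ht}}(2))$ stated in the discussion preceding the proposition, and one more application of the triangle inequality (applied to $u_{\mathrm{ht}}(2) = g_{\mathrm{ht}}(2) u g_{\mathrm{ht}}(2)^{-1}$) gives
\[
d_G(1, u_{\mathrm{ht}}(2)) \leq \left(\tfrac{2R_2}{c_0\delta} + 1\right) d_G(1,u).
\]

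For the inductive step, suppose we have already produced $u_{\mathrm{ht}}(i)$ with no entries at heights $2, \ldots, i-1$ and with a known upper bound on $d_G(1,u_{\mathrm{ht}}(i))$. Each of the $R_i$ conjugators $g(j)$ used to kill a height-$i$ entry sees a target entry of norm at most $d_G(1,u_{\mathrm{ht}}(i))$ (again by Lemma~\ref{lemma:unipotent entry bounded by u}), and the simple entries are still bounded below by $\delta$, so Lemma~\ref{lemma:simple one step}(ii) yields the single-step bound $d_G(1,g(j)) \leq d_G(1,u_{\mathrm{ht}}(i))/(c_0\delta)$. Adding these $R_i$ terms and doubling (for the conjugation) reproduces the two recursions
\[
d_G(1, g_{\mathrm{ht}}(i)) \leq \tfrac{R_i}{c_0\delta}\, d_G(1, u_{\mathrm{ht}}(i)), \qquad
d_G(1, u_{\mathrm{ht}}(i+1)) \leq \left(\tfrac{2R_i}{c_0\delta}+1\right) d_G(1, u_{\mathrm{ht}}(i)).
\]
Iterating the second recursion gives $d_G(1, u_{\mathrm{ht}}(i)) \leq \prod_{j=2}^{i-1}\left(\tfrac{2R_j}{c_0\delta}+1\right) d_G(1,u)$, and substituting into the first gives the desired bound on $d_G(1, g_{\mathrm{ht}}(i))$.

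Finally, since $g = g_{\mathrm{ht}}(r) \cdots g_{\mathrm{ht}}(2)$, one last application of the triangle inequality produces
\[
d_G(1,g) \leq \sum_{i=2}^{r} d_G(1, g_{\mathrm{ht}}(i)) \leq \left(\sum_{i=2}^r \tfrac{R_i}{c_0\delta}\prod_{j=2}^{i-1}\left(\tfrac{2R_j}{c_0\delta}+1\right)\right) d_G(1,u) = K(\delta)\, d_G(1,u),
\]
which is exactly the claimed inequality. There is no real obstacle here beyond bookkeeping: the key structural input (that each step only affects entries of strictly greater height and leaves simple entries untouched) is already encoded in Lemma~\ref{lemma:simple unipotent} and the explicit formulas in Lemmas~\ref{lem:conjugating by mu} and \ref{lem:conjugating by commutator}, and the numerical control comes directly from Lemma~\ref{lemma:simple one step}. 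The mildly delicate point to verify is that $\delta$ really can be used as a uniform lower bound throughout the whole algorithm, which follows because the simple entries are invariants of every conjugation step performed.
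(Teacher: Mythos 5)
Your proof faithfully reproduces the argument the paper gives in the discussion preceding this proposition: the same grouping of conjugators by the height of the target root, the same two recursions for $d_G(1,g_{\mathrm{ht}}(i))$ and $d_G(1,u_{\mathrm{ht}}(i))$, and the same final telescoping sum. However, both you and the paper rest on a claim that you state explicitly — that ``each step only affects entries of strictly greater height'' — which is not what Lemmas~\ref{lemma:simple unipotent}, \ref{lem:conjugating by mu} and \ref{lem:conjugating by commutator} actually provide. Lemma~\ref{lemma:simple one step}\ref{lemma:simple one step exists} guarantees only that entries at roots \emph{smaller in the chosen ordering} are unchanged; the collateral-damage set $\{\mu\}+\Pi$ in general contains roots of the \emph{same height} as the target $\lambda_0$, and the ordering of Lemma~\ref{lemma:simple unipotent} only arranges that $\lambda_0$ is the smallest of these, not the only one. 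Concretely, in $A_n$ with the paper's choice $\mu = \lambda_i + \cdots + \lambda_{j-1}$ for $\lambda_0 = \lambda_i + \cdots + \lambda_j$ (with $i>1$), the set $\{\mu\}+\Pi$ also contains $\lambda_{i-1}+\cdots+\lambda_{j-1}$, of the same height as $\lambda_0$, and that entry is shifted by $[Z_\mu, Y_{\lambda_{i-1}}]$, which can have norm of order $\norm{Z_\mu}\cdot\norm{Y_{\lambda_{i-1}}}$.

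As a result, the second and subsequent height-$i$ conjugators need not see target entries that are still entries of $u_{\mathrm{ht}}(i)$, so the uniform single-step bound $d_G(1,g(j)) \leq d_G(1,u_{\mathrm{ht}}(i))/(c_0\delta)$, applied $R_i$ times, is not justified as written. The gap can be closed by running the very same triangle-inequality recursion root-by-root instead of height-by-height: a single conjugation gives $d_G(1,g(k)) \leq d_G(1,u(k-1))/(c_0\delta)$ and $d_G(1,u(k)) \leq \bigl(1+\tfrac{2}{c_0\delta}\bigr) d_G(1,u(k-1))$, which still yields a linear bound, but with the factor $\bigl(1+\tfrac{2}{c_0\delta}\bigr)^{R_i}$ at each height in place of $1+\tfrac{2R_i}{c_0\delta}$, i.e.\ a larger constant $K(\delta)$ than the one stated.
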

\subsection{The last step towards finding a short conjugator}

Take $u$ as above and let $v$ be an element in $N$ conjugate to $u$. Suppose we can express $v$ as
		$$\exp \left(\sum_{\lambda \in \Lambdaplus} W_\lambda\right), \ \ W_\lambda \in \Lie{g}_\lambda.$$
By applying Algorithm \ref{alg:reduction to superdiagonal simple case} we may assume that $Y_\lambda = 0 = W_\lambda$ for all non-simple roots $\lambda \in \Lambdaplus$. Then, by choosing $H \in \Lie{a}$ appropriately, we can conjugate $u$ to $v$ using $\exp (H)$. To be precise:
\begin{eqnarray*}
gug^{-1} & = & \exp(H) \exp \left( \sum_{\lambda \in \Pi} Y_\lambda \right)\exp(-H)\\
		 & = & \exp \left( \sum_{\lambda \in \Pi} e^{\lambda(H)}Y_\lambda \right).
\end{eqnarray*}
Hence our choice of $H$ needs to be such that $e^{\lambda(H)}Y_\lambda = W_\lambda$. We might ask, what if we need negative scalars? The following Proposition answers this question:

\begin{prop}\label{prop:existence of diagonal conjugator}
Let $u$ and $v$ be unipotent elements contained in the same maximal unipotent subgroup $N$ of $G$. Suppose that $u$ is conjugate to $v$ in $G$ and furthermore suppose that the non-simple entries of $u$ and $v$ are all trivial while the simple entries are all non-zero. Then there exists $H_0 \in \Lie{a}$ such that 
$$\exp (H_0)u\exp(-H_0) = v
\textrm{.}$$
\end{prop}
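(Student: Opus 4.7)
The plan is to turn the proposition into a solvable system of scalar equations. Since $\Lie{g}$ is split, each root space $\Lie{g}_\lambda$ is one-dimensional: fix basis vectors $E_\lambda \in \Lie{g}_\lambda$ and write $Y_\lambda = y_\lambda E_\lambda$, $W_\lambda = w_\lambda E_\lambda$ with $y_\lambda, w_\lambda \in \R \setminus \{0\}$. For any $H \in \Lie{a}$, the adjoint action gives $\Ad(\exp H)|_{\Lie{g}_\mu} = e^{\mu(H)}\,\mathrm{id}$, so conjugating $u$ by $\exp(H)$ leaves all non-simple entries zero and sends each simple entry $y_\lambda E_\lambda$ to $(e^{\lambda(H)} y_\lambda)E_\lambda$. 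Hence $\exp(H_0)u\exp(-H_0) = v$ is equivalent to the system $e^{\lambda(H_0)} = w_\lambda/y_\lambda$ for every $\lambda \in \Pi$. As $\Pi$ is a basis of $\Lie{a}^*$, the map $\Lie{a} \to \R^{|\Pi|}$ sending $H$ to $(\lambda(H))_{\lambda \in \Pi}$ is a linear isomorphism, so this system admits a unique real solution $H_0$ precisely when every ratio $w_\lambda/y_\lambda$ is strictly positive.

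Everything thus reduces to the sign-matching assertion: for each simple root $\lambda$, $\mathrm{sign}(w_\lambda) = \mathrm{sign}(y_\lambda)$. To extract this from the $G$-conjugacy of $u$ and $v$, I would take a conjugator $g$ with $gug^{-1} = v$, write it in Iwasawa form $g = k\,\exp(H)\,n$ with $k \in K$, $H \in \Lie{a}$ and $n \in N$, and apply the identity $\Ad(g)\log u = \log v$. The factor $\Ad(n)$ acts as the identity on the abelianization $\Lie{n}/[\Lie{n},\Lie{n}] = \bigoplus_{\lambda \in \Pi}\Lie{g}_\lambda$ (since $\Lie{g}_\mu \subseteq [\Lie{n},\Lie{n}]$ for non-simple $\mu$) and $\Ad(\exp H)$ acts by positive scalars, so the sign question is pushed entirely onto the action of $\Ad(k)$ on this abelianization. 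The plan is then to show that the requirement $v \in N$ (i.e.\ that $\Ad(k)$ preserves the simple weight decomposition modulo higher-order terms) forces $\Ad(k)$ to act on each $\Lie{g}_\lambda$ by a positive scalar; equivalently, the relevant piece of $k$ lies in the identity component of the full real torus.

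The hard part is this last assertion. Since $K$ meets the non-identity components of the maximal real torus of $G$, it is a priori possible for $\Ad(k)$ to act by $-1$ on various simple root spaces while fixing each of them setwise, and one must rule this out. This is where the split hypothesis on $\Lie{g}$ together with the trivial-centre assumption on $G$ should do the work: the split structure endows each $\Lie{g}_\lambda$ with a canonical real line, and a careful tracking of how the finite component group $K \cap A$ acts on this line, root-system by root-system, should show that the sign pattern $(\mathrm{sign}(y_\lambda))_{\lambda \in \Pi}$ is indeed a $G$-conjugacy invariant. Once this is established, the first paragraph produces the required $H_0 \in \Lie{a}$.
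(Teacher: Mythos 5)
Your first paragraph is correct and cleanly isolates the issue: conjugation by $\exp(H_0)$ scales each simple entry by the positive number $e^{\lambda(H_0)}$, so an $H_0\in\Lie{a}$ exists exactly when $w_\lambda/y_\lambda>0$ for every simple root $\lambda$, i.e.\ when the simple sign patterns of $u$ and $v$ agree. You then correctly identify that everything hinges on showing this sign pattern is a $G$-conjugacy invariant --- but this is precisely where the proposal cannot be completed, because the invariance claim is false, and with it the proposition itself. Take $G=\SL[\R]{3}$, $u=\exp(E_{12}+E_{23})$, $v=\exp(-E_{12}-E_{23})$; both have zero $E_{13}$-entry and non-zero simple entries. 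The matrix $m=\mathrm{diag}(-1,1,-1)$ is orthogonal of determinant $1$ and commutes with $\Lie{a}$, hence lies in $Z_K(A)$, and $mum^{-1}=v$; so $u$ and $v$ are $G$-conjugate. Yet $\exp(H_0)u\exp(-H_0)=v$ would require $e^{\lambda_1(H_0)}=-1$, which has no real solution. The split and trivial-centre hypotheses do not force $Z_K(A)$ to act trivially on the simple root lines: here $Z_K(A)\cong(\Z/2)^2$ realises every sign pattern on $\Lie{g}_{\lambda_1}\oplus\Lie{g}_{\lambda_2}$.

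The paper's own proof founders at the corresponding step. After reducing to $v=a'ua'^{-1}$ with $a'\in Z_G(A)$, it asserts that $\exp$ is surjective from $Z_{\Lie{g}}(\Lie{a})$ onto $Z_G(A)$ and writes $a'=\exp(H')$. But in the split case $Z_{\Lie{g}}(\Lie{a})=\Lie{a}$, so $\exp$ lands only in $A$, whereas $Z_G(A)=Z_K(A)\cdot A$ has $|Z_K(A)|$ components; surjectivity fails whenever $a'$ carries a non-identity $Z_K(A)$-factor, as in the example above. What both arguments actually prove is the weaker statement that there exist $m\in Z_K(A)$ and $H_0\in\Lie{a}$ with $\bigl(m\exp(H_0)\bigr)u\bigl(m\exp(H_0)\bigr)^{-1}=v$. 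Since $Z_K(A)$ is finite, $d_G(1,m)$ is bounded by a constant of $G$ alone, so this weaker version still delivers the conjugacy-length bound of Theorem \ref{thm:unipotent clf}. Alternatively the proposition should carry the extra hypothesis that $u$ and $v$ have the same simple sign pattern, the condition already implicit in the phrase ``when the simple entries in $u$ and $v$ are positive'' in Section \ref{sec:unipotent:outline}.
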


\begin{proof}
Let $g \in G$ be such that $gug^{-1}=v$. First observe that, since we are dealing with the simple case, both $u$ and $v$ fix the same unique chamber $\bdry{\mathcal{C}}$ in the ideal boundary of $X$ and belong to the same minimal parabolic subgroup $G_\xi=Z_\xi N_\xi$, where $Z_\xi=Z_G(A)$, $A=\exp (\Lie{a})$ and $N_\xi = N$. Any conjugator from $u$ to $v$ must map $\bdry{C}$ to itself, hence $g \in G_\xi$ as well. We may therefore write $g$ as $g=a'n$ where $a' \in Z_G(A)$ and $n \in N$.

Since $n \in N$ it follows that we may write $n$ as
$$n = \exp \left( \sum_{\lambda \in \Lambdaplus}Z_\lambda\right)
$$
where $Z_\lambda \in \Lie{g}_\lambda$. When we conjugate $u$ by $n$ we get the following:
	\begin{eqnarray*}
	nun^{-1}	& = &	\exp \left( \sum_{\lambda \in \Lambdaplus} Z_\lambda \right) \exp\left(\sum_{\lambda \in \Pi} Y_\lambda \right) \exp \left( -\sum_{\lambda \in \Lambdaplus} Z_\lambda \right)\\
				& = &	\exp \left( \sum_{\lambda \in \Pi} Y_\lambda + \tilde{Y} \right)
	\end{eqnarray*}
where $\tilde{Y}$ is the sum of elements $\tilde{Y}_\lambda$ from the non-simple positive root-spaces. Let $\Lie{a}$ be the maximal abelian subspace of $\Lie{p}$ such that $A = \exp (\Lie{a})$. The exponential map, when restricted to $Z_\Lie{g}(\Lie{a})$, is surjective onto $Z_G(A)$. So there exists $H' \in Z_\Lie{g}(\Lie{a})$ such that $a'=\exp(H')$.
We can decompose $Z_\Lie{g}(\Lie{a})$ into the direct sum (see, for example, \cite[2.17.10]{Eber96})
	$$Z_\Lie{g}(\Lie{a})=\Lie{k}\cap Z_\Lie{g}(\Lie{a}) \oplus \Lie{a}
	\textrm{.}$$
Hence there exists unique $U \in \Lie{k} \cap Z_\Lie{g}(\Lie{a})$ and $H \in \Lie{a}$ such that $H'=U+H$. Since $U$ and $H$ commute, $a'=\exp(U)\exp(H)=\exp(H)\exp(U)$. Conjugating $nun^{-1}$ by $\exp(H)$ gives us
	\begin{eqnarray*}
	\exp(H)nun^{-1}\exp(-H)	&=&	\exp (H) \exp \left( \sum_{\lambda \in \Pi} Y_\lambda + \sum_{\lambda \in \Lambdaplus \setminus \Pi} \tilde{Y}_\lambda \right) \exp(-H)\\
							&=& \exp \left( \sum_{\lambda \in \Pi} e^{\lambda(H)}Y_\lambda + \sum_{\lambda \in \Lambdaplus \setminus \Pi} e^{\lambda(H)}\tilde{Y}_\lambda \right).
	\end{eqnarray*}
Conjugating this by $\exp(U)$ gives us $v$ as
	$$v=\exp \left( \sum_{\lambda \in \Pi} e^{\ad (U)}e^{\lambda(H)}Y_\lambda + \sum_{\lambda \in \Lambdaplus \setminus \Pi} e^{\ad (U)}e^{\lambda(H)}\tilde{Y}_\lambda \right)
	\textrm{.}$$
Notice that, since $U \in Z_\Lie{g}(\Lie{a})$, for each $\lambda \in \Lambdaplus \setminus \Pi$ the term $e^{\ad(U)}e^{\lambda(H)}\tilde{Y}_\lambda$ is in the root-space $\Lie{g}_\lambda$. But the exponentional map gives a bijection between $\Lie{n}$ and $N$. Hence
	$$\sum_{\lambda \in \Pi} W_\lambda = \sum_{\lambda \in \Pi}e^{\ad(U)}e^{\lambda(H)}Y_\lambda + \sum_{\lambda \in \Lambdaplus \setminus \Pi}e^{\ad(U)}e^{\lambda(H)}\tilde{Y}_\lambda
\textrm{.}$$
It follows that $W_\lambda = e^{\ad(U)}e^{\lambda(H)}Y_\lambda$ for each simple root $\lambda$ and $0=e^{\ad(U)}e^{\lambda(H)}\tilde{Y}_\lambda$ when $\lambda$ is non-simple. Thus $\tilde{Y}=0$ and in particular
	$$nun^{-1}=u
	\textrm{.}$$
It follows that
	$$v=gug^{-1}=a'nun^{-1}a'^{-1}=a'ua'^{-1}
	\textrm{.}$$
In order to finish the proof we find an element $H_0 \in \Lie{a}$ to do the required job. Let $C_\lambda(U) \in \R$ be such that $[U,Y_\lambda]=C_\lambda(U) Y_\lambda$. Then $e^{\ad(U)}Y_\lambda = e^{C_\lambda(U)}Y_\lambda$ and in particular we see that there exists a positive constant $C_\lambda=e^{C_\lambda(U)+\lambda(H)}$ for each simple root $\lambda$ such that
	$$W_\lambda = C_\lambda Y_\lambda
	\textrm{.}$$
Now we notice that in $\Lie{a}$ we have sufficient degrees of freedom to choose $H_0 \in \Lie{a}$ such that $\lambda(H_0)=C_\lambda$ for each $\lambda \in \Pi$. Then $H_0$ is the required element to complete the proof.
 \end{proof}

\begin{remark}
Note that to the existence of the constants $C_\lambda(U)$ required the dimension of each simple root-space in $\Lie{g}$ to be equal to $1$. So Proposition \ref{prop:existence of diagonal conjugator} requires $\Lie{g}$ to be split.
\end{remark}

\subsection{The short conjugators}

Let $u,v$ be unipotent elements contained in the same maximal unipotent subgroup $N$ of $G$, both of which have all simple entries non-zero. By Algorithm \ref{alg:reduction to superdiagonal simple case} we can construct $g_1$ and $g_2$ in $N$ such that all non-simple entries in $u'=g_1 u g_1^{-1}$ and $v'=g_2 v g_2^{-1}$ are zero. By Proposition \ref{prop:existence of diagonal conjugator} there exists $g_3 \in A$ such that $g_3 u' g_3^{-1} = v'$. Put $g = g_2^{-1} g_3 g_1$. Then
	$$gug^{-1}=v
	\textrm{.}$$
With this process we can find a short conjugator for $u$ and $v$.

\begin{thm}\label{thm:unipotent clf}
Fix $\delta>0$. Let $N$ be a maximal unipotent subgroup of $G$. Consider two conjugate unipotent elements 
		$$u=\exp \left( \sum_{\lambda \in \Lambdaplus} Y_\lambda \right) , \ \ v=\exp \left( \sum_{\lambda \in \Lambdaplus} W_\lambda \right) \ \in \ N$$
such that $\min \{\Delta(u),\Delta(v)\}\geq \delta$. Then there exists $g \in G$ such that $gug^{-1}=v$ and which satisfies:
	$$d_G(1,g) \leq L(\delta)( d_G(1,u) + d_G(1,v))
	$$
where $L(\delta)$ will depend on $\delta$ and on the root-system $\Lambda$ associated to $G$.
\end{thm}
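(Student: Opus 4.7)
The plan is to assemble three ingredients built earlier in Sections~\ref{sec:unipotent} and~\ref{sec:unipotent:constructing a conjugator}: Algorithm~\ref{alg:reduction to superdiagonal simple case} (which strips the non-simple entries while leaving the simple entries untouched), Proposition~\ref{prop:simple to super-diagonal conjugator is bounded} (which controls the cost of the algorithm linearly in $d_G(1,\cdot)$), and Proposition~\ref{prop:existence of diagonal conjugator} (which conjugates two elements supported on simple root-spaces by an element of $A = \exp(\Lie{a})$).

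First I would run Algorithm~\ref{alg:reduction to superdiagonal simple case} on $u$ and on $v$ separately, producing $g_1, g_2 \in N$ for which $u':= g_1 u g_1^{-1}$ and $v' := g_2 v g_2^{-1}$ are supported only on the simple root-spaces. Since the algorithm preserves simple entries, $\Delta(u') = \Delta(u)\geq\delta$ and $\Delta(v')=\Delta(v)\geq\delta$, so Proposition~\ref{prop:simple to super-diagonal conjugator is bounded} gives
$$d_G(1,g_1)\leq K(\delta)\, d_G(1,u), \qquad d_G(1,g_2)\leq K(\delta)\, d_G(1,v).$$
Because $u'$ and $v'$ remain conjugate in $G$, Proposition~\ref{prop:existence of diagonal conjugator} then produces $H_0\in\Lie{a}$ such that $g_3:=\exp(H_0)$ satisfies $g_3 u' g_3^{-1} = v'$. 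I take $g := g_2^{-1}g_3 g_1$ as the candidate conjugator; a direct check gives $g u g^{-1} = v$.

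The length control then follows from the triangle inequality and left-invariance, $d_G(1,g)\leq d_G(1,g_1)+d_G(1,g_3)+d_G(1,g_2)$, together with the equality $d_G(1,g_3)=\|H_0\|$, which holds because $H_0\in\Lie{a}\subset\Lie{p}$ makes $t\mapsto\exp(tH_0)$ a Riemannian geodesic in $G$. To bound $\|H_0\|$, note that the identity $g_3 u' g_3^{-1}=v'$ forces $W_\lambda = e^{\lambda(H_0)} Y_\lambda$ for each $\lambda\in\Pi$, so
$$\lambda(H_0) = \log\frac{\|W_\lambda\|}{\|Y_\lambda\|}.$$
Lemma~\ref{lemma:unipotent entry bounded by u} bounds the numerator by $d_G(1,v')$ and the hypothesis $\Delta(v')\geq\delta$ bounds the denominator below; the symmetric estimate handles the opposite sign. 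Since $\Pi$ is a basis of $\Lie{a}^*$, this yields $\|H_0\|\leq C_\Lambda \max_{\lambda\in\Pi}|\lambda(H_0)|$ for a constant $C_\Lambda$ depending only on the root system.

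The hard part will be converting this logarithmic bound on $\|H_0\|$ into the desired linear one. To do this, I would exploit two consequences of the hypothesis $\Delta\geq\delta$: Lemma~\ref{lemma:unipotent entry bounded by u} applied to the simple entries of $u$ and $v$ themselves gives $d_G(1,u),d_G(1,v)\geq\delta$, so the sum $d_G(1,u)+d_G(1,v)\geq 2\delta$ is bounded below uniformly, and the first step yields $d_G(1,u')\leq(2K(\delta)+1)d_G(1,u)$ and similarly for $v'$. Combining $\log x\leq x$ for $x>0$ with these estimates converts the $\log\max(d_G(1,u'),d_G(1,v'))$ term into something linear in $d_G(1,u)+d_G(1,v)$, while the additive constant $\log(1/\delta)$ is absorbed using the uniform lower bound $2\delta$. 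Summing the three contributions to $d_G(1,g)$ produces the claimed linear bound, with $L(\delta)$ determined by $\delta$, by the constant $K(\delta)$ of Proposition~\ref{prop:simple to super-diagonal conjugator is bounded} (itself depending on the structure constant $c_0$ of Proposition~\ref{prop:Lie product size}), and by the basis constant $C_\Lambda$ governing how $\Pi$ sits inside $\Lie{a}^*$.
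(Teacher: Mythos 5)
Your proposal is correct and follows essentially the same route as the paper: run Algorithm~\ref{alg:reduction to superdiagonal simple case} on $u$ and $v$ separately (controlled by Proposition~\ref{prop:simple to super-diagonal conjugator is bounded}), then apply Proposition~\ref{prop:existence of diagonal conjugator} to produce $g_3 = \exp(H_0) \in A$, and finally bound $\|H_0\|$ via $\lambda(H_0) = \log(\|W_\lambda\|/\|Y_\lambda\|)$ for $\lambda\in\Pi$. The only cosmetic difference is that you bound $\|H_0\|$ using the basis property of $\Pi$ rather than the Killing-form identity $B(H,H) = \sum_{\lambda\in\Lambda}\lambda(H)^2$ as the paper does, and you are somewhat more explicit about absorbing the $\log(1/\delta)$ constant using the lower bound $d_G(1,u), d_G(1,v) \geq \delta$ coming from Lemma~\ref{lemma:unipotent entry bounded by u}; both are sound and give the same conclusion.
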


\begin{proof}
Recall that $g=g_2^{-1}g_3g_1$ with $g_2$ and $g_1$ as in Algorithm \ref{alg:reduction to superdiagonal simple case}. By Proposition \ref{prop:simple to super-diagonal conjugator is bounded}
		$$d_G(1,g_1)+d_G(1,g_2) \leq K(d_G(1,u)+d_G(1,v))$$
where $K$ depends on $\Lambda$, $c_0$ and $\delta$.
All we need to do now is obtain a linear upper bound for the size of $g_3$. By Proposition \ref{prop:existence of diagonal conjugator} this is member of $A$, equal to $\exp (H)$ for some $H \in \Lie{a}$, which satisfies the following for each simple root $\lambda$:
	\begin{equation}\label{eq:lambda(H)}
	e^{\lambda(H)}=\frac{\norm{W_\lambda}}{\norm{Y_\lambda}}
	\end{equation}
where $Y_\lambda$ is the $\lambda$--entry of $u$ and $W_\lambda$ is the $\lambda$--entry of $v$. The size $d_G(1,g_3)$ is given by the norm of $H$, which is equal to the Killing form 
	$$B(H,H)=\textrm{Trace}(\ad (H)^2)=\sum_{\lambda \in \Lambda}\lambda(H)^2
	\textrm{.}$$
Since every root in $\Lambda$ can be expressed as an integer linear combination of simple roots, it follows that there exists a constant $S_\Lambda$ such that when we take the sum over only the simple roots, rather than all positive roots, we get:
	\begin{equation}\label{eq:norm(H)}
	\sum_{\lambda \in \Pi}\lambda(H)^2 \leq \norm{H}=B(H,H)\leq S_\Lambda \sum_{\lambda \in \Pi} \lambda(H)^2
	\textrm{.}
	\end{equation}
By combining (\ref{eq:lambda(H)}) and (\ref{eq:norm(H)}) we get
	\begin{eqnarray*}
	d_G(1,g_3)	&=&		\norm{H}\\
				&\leq&	S_\Lambda \sum_{\lambda \in \Pi} \lambda(H)^2\\
				&=&		S_\Lambda \sum_{\lambda \in \Pi} \left( \ln \norm{W_\lambda} - \ln \norm{Y_\lambda}\right)^2\\
				&=&		S_\Lambda \sum_{\lambda \in \Pi} \left( \ln \norm{W_\lambda} \right)^2 + \left( \ln \norm{Y_\lambda} \right)^2 - \ln\norm{W_\lambda}\ln\norm{Y_\lambda}\\
				&\leq&	S_\Lambda \sum_{\lambda \in \Pi} \left( \ln d_G(1,v) \right)^2 + \left( \ln d_G(1,u) \right)^2 - 2 \ln(\delta)^2
	\end{eqnarray*}
This is therefore sufficient to conclude that the size of $g$, for sufficiently large $u,v$, is bounded above by a linear function of $d_G(1,u)+d_G(1,v)$, the coefficient of which will depend on $\delta$, $K(\delta)$ and $S_\Lambda$. This completes the proof.
\end{proof}

It is well known that the maximal unipotent subgroups in $G$ form one conjugacy class. Furthermore, if we fix a maximal compact subgroup $K$, then given any pair of maximal unipotent subgroups $N_1$ and $N_2$ there exists $k \in K$ such that $kN_1k^{-1}=N_2$. This gives us the following consequence of Theorem \ref{thm:unipotent clf}:

\begin{thm}\label{thm:unipotent clf 2}
For every $\delta>0$ there exists a constant $\hat{L}(\delta)$ such that, if $u$ and $v$ are unipotent elements in $G$ satisfying $\min\{\Delta(u),\Delta(v)\} \geq \delta$, then $u$ is conjugate to $v$ if and only if there exists some $g \in G$ such that $gug^{-1}=v$ and
		$$d_G(1,g)\leq \hat{L}(\delta)\big( d_G(1,u)+d_G(1,v)\big).$$
\end{thm}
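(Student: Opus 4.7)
The plan is to reduce Theorem \ref{thm:unipotent clf 2} to Theorem \ref{thm:unipotent clf}. The key observation is that the maximal compact subgroup $K=G_p$ acts transitively by conjugation on the collection of maximal unipotent subgroups of $G$: this is a direct consequence of the Iwasawa decomposition $G=KAN$, which shows that every Borel subgroup, and hence every maximal unipotent subgroup, is a $K$-conjugate of $N$.

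Let $D=\sup_{k\in K} d_G(1,k)<\infty$. Given conjugate unipotent elements $u,v\in G$ with $\min\{\Delta(u),\Delta(v)\}\geq\delta$, each is contained in some maximal unipotent subgroup, so I would choose $k_1,k_2\in K$ with the property that $\tilde u := k_1 u k_1^{-1}$ and $\tilde v := k_2 v k_2^{-1}$ both lie in our fixed $N$. Two invariance properties must then be verified. First, since $\theta_p$ commutes with $\Ad(k)$ for all $k\in K$, the inner product $\IPp$ is $\Ad(K)$-invariant; thus $\Ad(k_1)$ acts isometrically on $\Lie{g}$ and carries the root-space decomposition associated with the maximal unipotent containing $u$ onto the decomposition associated with $N$, so $\Delta(\tilde u)=\Delta(u)\geq\delta$, and similarly $\Delta(\tilde v)\geq\delta$. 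Second, left-invariance of $d_G$ combined with the triangle inequality gives $d_G(1,\tilde u)\leq d_G(1,u)+2D$ and $d_G(1,\tilde v)\leq d_G(1,v)+2D$.

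Now Theorem \ref{thm:unipotent clf} applied to the conjugate pair $\tilde u,\tilde v\in N$ yields $\tilde g\in G$ with $\tilde g\tilde u\tilde g^{-1}=\tilde v$ and $d_G(1,\tilde g)\leq L(\delta)(d_G(1,\tilde u)+d_G(1,\tilde v))$. Setting $g:=k_2^{-1}\tilde g k_1$ gives $gug^{-1}=v$ by direct computation, together with $d_G(1,g)\leq 2D+d_G(1,\tilde g)$, which leads to a bound of the form $d_G(1,g)\leq L(\delta)\bigl(d_G(1,u)+d_G(1,v)\bigr)+C(\delta,D)$. To promote this to the clean linear bound with constant $\hat L(\delta)$, I would invoke Lemma \ref{lemma:unipotent entry bounded by u} applied to $\tilde u$: since every simple entry of $\tilde u$ has norm at least $\delta$, the lemma forces $d_G(1,\tilde u)\geq\delta$, giving a positive lower bound on $d_G(1,u)+d_G(1,v)$ (at least once $\delta>2D$; the degenerate case of smaller $\delta$ is handled by a separate compactness argument on the bounded region of pairs $(u,v)$). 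The reverse implication of the biconditional is trivial. The only step demanding genuine care is the $\Ad(K)$-invariance argument ensuring that the $\Delta$-hypothesis transfers to $\tilde u,\tilde v$; after that, the proof is routine bookkeeping around Theorem \ref{thm:unipotent clf}.
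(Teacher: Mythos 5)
Your proposal is correct and follows the same route as the paper, which likewise reduces to Theorem \ref{thm:unipotent clf} via the observation that $K$ acts transitively (by conjugation) on maximal unipotent subgroups. One small simplification worth noting: since $\Delta$ is $\Ad(K)$-invariant (as you establish), you may apply Lemma \ref{lemma:unipotent entry bounded by u} to $u$ directly inside its own maximal unipotent subgroup to get $d_G(1,u)\geq\Delta(u)\geq\delta$ and likewise $d_G(1,v)\geq\delta$; this absorbs the additive constant $C(\delta,D)$ into the linear bound for every $\delta>0$, so the separate casework for $\delta\leq 2D$ and the appeal to compactness are unnecessary.
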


\subsection{Application to lattices}

The condition that $\Delta(u)$ and $\Delta(v)$ must be sufficiently far away from zero is a stronger property than saying they must avoid a neighbourhood of the identity. Nonetheless, with the following Lemma we can use Theorem \ref{thm:unipotent clf 2} to deduce a result for lattices.

\begin{lemma}\label{lemma:unipotent simple entries lower bound}
Let $u \in N$ be as in (\ref{eq:u exp 2}). Then there exists $\delta > 0$ such that if $u \in \Gamma$ then for each simple root $\lambda_i \in \Pi$ either $\norm{Y_{\lambda_i}} \geq \delta$ or $Y_{\lambda_i}=0$.
\end{lemma}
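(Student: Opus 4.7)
The plan is to exhibit, for each simple root $\lambda_i \in \Pi$, a continuous group homomorphism $\pi_i \colon N \to \R$ such that $\pi_i(\Gamma \cap N)$ is a discrete subgroup of $\R$; the positive generators of these subgroups then combine to give the required $\delta$.

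First, for each $\lambda_i \in \Pi$, the subspace
$$\Lie{k}_i := \sum_{\lambda \in \Lambdaplus \setminus \{\lambda_i\}} \Lie{g}_\lambda$$
is a Lie subalgebra of $\Lie{n}$, because a simple root cannot be written as the sum of two positive roots: for $\lambda, \mu \in \Lambdaplus \setminus \{\lambda_i\}$ the bracket $[\Lie{g}_\lambda, \Lie{g}_\mu] \subseteq \Lie{g}_{\lambda+\mu}$ lies in $\Lie{k}_i$. Hence $K_i := \exp(\Lie{k}_i)$ is a closed normal subgroup of $N$. Since $\Lie{g}$ is split, $\Lie{g}_{\lambda_i} \cong \R$, and the exponential map identifies $N / K_i$ with $\Lie{g}_{\lambda_i}$. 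This yields a continuous homomorphism $\pi_i \colon N \to \R$ satisfying $|\pi_i(u)| = \norm{Y_{\lambda_i}}$ (up to a fixed positive scalar depending only on the chosen identifications) for any $u = \exp(\sum_\lambda Y_\lambda) \in N$.

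Next, I would invoke the standard structural fact that $\Gamma \cap N$ is either trivial---in which case the lemma is vacuous---or a uniform lattice in the unipotent Lie group $N$.

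The main step, and principal obstacle, is to show that $\pi_i(\Gamma \cap N)$ is discrete in $\R$. Discreteness of $\Gamma \cap N$ alone does not suffice: a lattice in a nilpotent Lie group may project densely onto a one-dimensional quotient (witness the lattice in $\R^2$ generated by $(1,0)$ and $(\sqrt{2},1)$, which projects densely to either coordinate axis). The discreteness must be extracted from the compatibility between $\Gamma$ and the algebraic structure of $G$. When $\rrank(G) \geq 2$ and $\Gamma$ is irreducible, Margulis arithmeticity endows $G$ with a $\Q$-structure with respect to which $K_i$ is $\Q$-defined; standard results on arithmetic lattices then imply that $\Gamma \cap K_i$ is a lattice in $K_i$, and consequently $\pi_i(\Gamma \cap N) \cong (\Gamma \cap N)/(\Gamma \cap K_i)$ embeds as a lattice in $\R$. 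In the rank-one split case, $\Lie{g} \cong \mathfrak{sl}_2(\R)$, so $|\Pi| = 1$ and $N \cong \R$, and the discreteness of $\pi_1(\Gamma \cap N)$ is immediate from that of $\Gamma \cap N$. In either case $\pi_i(\Gamma \cap N) = \delta_i \Z$ for some $\delta_i \geq 0$; setting $\delta := \min\{\delta_i : \delta_i > 0\}$ (rescaled by the fixed scalar from the first paragraph) completes the argument.
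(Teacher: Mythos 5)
Your argument and the paper's proof are, at their core, the same claim in different notation, and they share the same gap. The paper picks a finite generating set $\gamma_1,\dots,\gamma_r$ for $\Gamma\cap N$ (discreteness gives finite generation), applies Baker--Campbell--Hausdorff to conclude that every $\lambda_i$-entry occurring in $\Gamma\cap N$ lies in the integer linear span of the $\lambda_i$-entries of the $\gamma_j$, and then asserts a positive lower bound. You build the homomorphism $\pi_i\colon N\to\Lie{g}_{\lambda_i}\cong\R$ with kernel $K_i=\exp\bigl(\sum_{\lambda\neq\lambda_i}\Lie{g}_\lambda\bigr)$ and ask for discreteness of $\pi_i(\Gamma\cap N)$. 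But the integer linear span in the paper \emph{is} exactly $\pi_i(\Gamma\cap N)$, and a finitely generated subgroup of $\R$ has a smallest nonzero element of absolute value precisely when it is discrete. So both proofs hinge on the same unproved fact, and you are right to flag it: a lattice in a nilpotent Lie group can project densely onto a one-dimensional quotient, and the paper's word ``hence'' in ``hence there is an element of minimal length'' passes over exactly this point.

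Where your proposal itself breaks down is the repair. The assertion that $K_i$ is defined over $\Q$ with respect to the $\Q$-structure coming from arithmeticity is false in general, and this is not a technicality. Take $G=\mathrm{PSL}_2(\R)\times\mathrm{PSL}_2(\R)$ (split, centreless, no compact factors, real rank $2$) and $\Gamma=\mathrm{PSL}_2(\Z[\sqrt2])$ embedded via the two real places of $\Q(\sqrt2)$. The $\Q$-structure is the restriction of scalars, under which $N=N_1\times N_2$ is a $\Q$-subgroup, but the individual factors $N_1$ and $N_2$ (which are precisely your $K_2$ and $K_1$) are not $\Q$-defined. Correspondingly $\pi_1(\Gamma\cap N)\cong\Z[\sqrt2]$ is dense in $\R$, and the units $(\sqrt2-1)^n$ give elements of $\Gamma\cap N$ whose $\lambda_1$-entries are nonzero and tend to $0$. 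So the step ``$K_i$ is $\Q$-defined'' is not merely unjustified but actually fails for irreducible lattices in products, and indeed the example shows the lemma as stated requires a hypothesis (e.g.\ $G$ simple, or $\Gamma$ reducible whenever $G$ is a product) that neither your proof nor the paper's records.
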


\begin{proof}
Since $\Gamma \cap N$ is a discrete subgroup of $N$ we know it is finitely generated (see, for example, Corollary 2 of Theorem 2.10 in \cite{Ragh72}). Let $\{\gamma_1 , \ldots , \gamma_r\}$ be a set of generators for $\Gamma \cap N$ and let $\gamma = \gamma_{i_1}^{\varepsilon_1} \ldots \gamma_{i_s}^{\varepsilon_s}\in \Gamma \cap N$ where $i_j \in \{1,\ldots , r\}$ and $\varepsilon_j \in \Z \setminus \{0\}$. We can write each generator as
$$\gamma_i = \exp \left(\sum_{\lambda \in \Lambdaplus}Y_{\lambda}^{(i)}\right)
$$
where $Y_{\lambda}^{(i)} \in \Lie{g}_\lambda$ for each $i$ and each $\lambda$.
Then, by using the Campbell--Baker--Hausdorff formula,
		$$\gamma = \exp \left( \sum_{\lambda \in \Lambdaplus} \sum_{j=1}^s \varepsilon_j Y_\lambda^{(i_j)} + \tilde{Y} \right)$$
where $\tilde{Y}$ is a sum of terms from non-simple root-spaces. This tells us that each simple entry $Y_{\lambda_i}$ of $u$ belongs to the integer linear span of the set $\{Y_{\lambda_i}^{(1)},\ldots , Y_{\lambda_i}^{(r)}\}$, hence there is an element of minimal length for each simple root which can appear as an entry of an element in $\Gamma \cap N$. By taking the shortest of these lengths we obtain a positive value for $\delta$.
\end{proof}

\begin{cor}
Let $\Gamma$ be a lattice in $G$. Then there exists a constant $L>0$ such that two unipotent elements $u$ and $v$ in $\Gamma$ with non-zero simple entries are conjugate in $G$ if and only if there exists a conjugator $g \in G$ such that
		$$d_G(1,g) \leq L \big( d_G(1,u)+d_G(1,v)\big).$$
\end{cor}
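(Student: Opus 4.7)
The plan is to deduce the corollary by combining Theorem \ref{thm:unipotent clf 2}, which is already tailor-made to produce a linear bound once a uniform lower bound $\delta$ on the simple entries is available, with Lemma \ref{lemma:unipotent simple entries lower bound}, which produces such a lower bound for a single fixed maximal unipotent subgroup. The nontrivial step is to promote the lemma's lower bound, which a priori depends on the choice of $N$, to one that works simultaneously for every unipotent element of $\Gamma$.

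First I would observe that any unipotent $u \in \Gamma$ lies in some maximal unipotent subgroup $N_u$ of $G$, and that $u \in \Gamma \cap N_u$. Applying Lemma \ref{lemma:unipotent simple entries lower bound} to $N_u$ yields a constant $\delta(N_u) > 0$ bounding the non-zero simple entries of $u$ from below. To obtain a uniform $\delta$, I would invoke reduction theory for lattices: there are only finitely many $\Gamma$-conjugacy classes of maximal unipotent subgroups of $G$ that meet $\Gamma$ non-trivially (for cocompact $\Gamma$ the Godement criterion makes the statement vacuous, and for non-uniform $\Gamma$ in higher rank this is a consequence of Margulis arithmeticity and standard reduction theory; in rank one it is the finiteness of cusps). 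Picking representatives $N_1, \ldots, N_r$ of these classes, one gets $\delta_i := \delta(N_i)$ from the lemma, and I would take $\delta := \min_i \delta_i > 0$.

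Next I would verify that this $\delta$ applies to every unipotent $u \in \Gamma$ with non-zero simple entries: if $u \in \Gamma \cap N_u$ and $\gamma \in \Gamma$ conjugates $N_u$ to some $N_i$, then $\gamma u \gamma^{-1} \in \Gamma \cap N_i$ has simple entries of norm at least $\delta_i$, and since $\Ad$ acts by isometries on root spaces (up to the Weyl-group permutation, which preserves norms), this translates into $\Delta(u) \geq \delta$. With the uniform $\delta$ in hand, Theorem \ref{thm:unipotent clf 2} applied to any pair $u,v$ satisfying the corollary's hypothesis yields a conjugator $g \in G$ with $d_G(1,g) \leq \hat{L}(\delta)(d_G(1,u) + d_G(1,v))$, and we set $L := \hat{L}(\delta)$.

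The main obstacle is the reduction-theoretic finiteness step that makes $\delta$ uniform; everything else is a direct appeal to the previously established machinery. In particular, the bookkeeping to show that moving $u$ into one of the fixed $N_i$ by a $\Gamma$-conjugation preserves (up to passing to a different $\delta_i$) the lower bound on the simple entries is the only place where care about the $K$-action on root spaces is really needed.
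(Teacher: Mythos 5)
You have noticed a genuine subtlety: Lemma \ref{lemma:unipotent simple entries lower bound} yields a constant $\delta$ that depends on the chosen maximal unipotent $N$, and this must somehow be made uniform before Theorem \ref{thm:unipotent clf 2} can be invoked. (The paper presents the corollary without explicit proof and seems to leave this point implicit.) However, the step by which you promote the finitely many $\delta_i$ to a single uniform $\delta$ is incorrect. The assertion that ``$\Ad$ acts by isometries on root spaces'' fails for conjugation by a general $\gamma\in\Gamma$: the inner product $\IPp$ on $\Lie{g}$ is $\Ad(K)$-invariant but not $\Ad(G)$-invariant. Writing the Iwasawa decomposition $\gamma=kan$, conjugation of $N_u$ to $N_i$ by $\gamma$ scales the simple entry in $\Lie{g}_\lambda$ (measured in the $K$-conjugated coordinates as the function $\Delta$ requires) by the factor $e^{\lambda(\log a)}$, and the $A$-part of $\gamma$ is unbounded over $\gamma\in\Gamma$. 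Hence $\Delta$ is not $\Gamma$-conjugation-invariant, and $\Delta(\gamma u\gamma^{-1})\geq\delta_i$ does not give $\Delta(u)\geq\delta$.

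A concrete failure: in $\Gamma=\SL_3(\Z)$ take $u$ to be the unipotent matrix with $1$'s on the super-diagonal, and let $\gamma_n\in\Gamma$ have first column $(1,0,0)^{\mathrm{T}}$, second column $(0,n,1)^{\mathrm{T}}$, third column $(0,n-1,1)^{\mathrm{T}}$. The Iwasawa $A$-component $a_n$ of $\gamma_n$ satisfies $\lambda_1(\log a_n)=-\tfrac12\log(n^2+1)$, so $v_n:=\gamma_n u\gamma_n^{-1}\in\Gamma$ has nonzero simple entries, yet $\Delta(v_n)$ is of order $(n^2+1)^{-1/2}\to 0$, even though there is only one $\Gamma$-conjugacy class of maximal unipotent subgroups in this example. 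Thus the finiteness of conjugacy classes does nothing to bound $\Delta$ from below; the constant $\hat L(\Delta(v_n))$ from Theorem \ref{thm:unipotent clf 2} diverges, and the proposed route does not establish the corollary. (Here $\gamma_n$ itself is a conjugator of length of order $\log n$, so the statement is not contradicted --- but proving it requires controlling the lengths of the $\Gamma$-conjugating elements directly rather than appealing to $\Delta$-invariance.)
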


\newpage
\appendix
\section{Tables and Figures for Lemma \ref{lemma:simple unipotent}}

\begin{figure}[b!]
   \labellist
    \small\hair 5pt

   \endlabellist

   \centering
   \includegraphics[width=5cm]{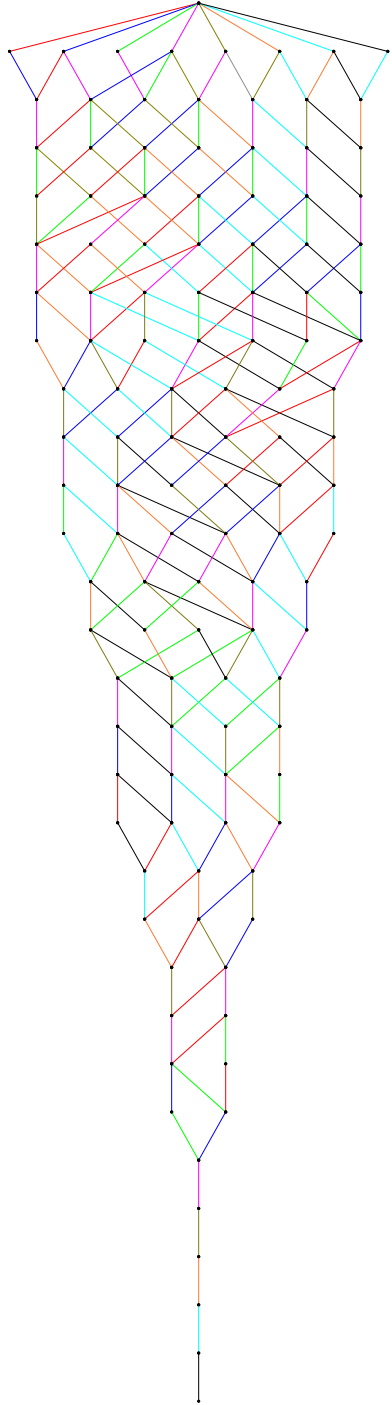}
   \caption[The root system $E_8$]{A graphical depiction of the positive roots in $E_{8}$. The vertices correspond to positive roots (the top vertex is $0$), while the edges correspond to addition of a simple root, when reading downwards. Each root has its own colour.}
   \label{fig:E_8}
\end{figure}

\begin{longtable}{|l|l|c|c|c|}
\caption{The simple case for root systems of type $E_{8}$}\label{table:E_8}\\
\hline \multicolumn{1}{|l|}{Height}& \multicolumn{1}{l|}{{Order}} &\multicolumn{1}{c|}{$\lambda$} &\multicolumn{1}{c|}{$\mu${ or }$\mu_{1}$} &\multicolumn{1}{c|}{$\mu_{2}${ (if needed)}}\\  \hline \hline \endhead
2	& 1 &$	\lambda_{1} + \lambda_{2}							$&$ \lambda_{2}									$	& \\ \hline
	& 2 &$	\lambda_{2} + \lambda_{4} 							$&$ \lambda_{4} 								$		& \\ \hline
	& 3 &$	\lambda_{3} + \lambda_{4}							$&$ \lambda_{3} 								$		& \\ \hline
	& 4 &$	\lambda_{4} + \lambda_{5} 							$&$ \lambda_{5} 								$		& \\ \hline
	& 5 &$	\lambda_{5} + \lambda_{6} 							$&$ \lambda_{6} 								$		& \\ \hline
	& 6 &$	\lambda_{6} + \lambda_{7} 							$&$ \lambda_{7} 								$		& \\ \hline
	& 7 &$	\lambda_{7} + \lambda_{8} 							$&$ \lambda_{8} 								$		& \\ \hline \hline
3	& 4 &$	\lambda_{1}+\lambda_{2}+\lambda_{4} 				$&$ \lambda_{1} + \lambda_{2}					$		& \\ \hline
	& 1 &$	\lambda_{2}+\lambda_{3}+\lambda_4					$&$ \lambda_{3} + \lambda_{4}					$		& \\ \hline
	& 3 &$	\lambda_{2}+\lambda_{4}+\lambda_{5}					$&$ \lambda_{2}								$		& $\lambda_5$ \\ \hline
	& 2 &$	\lambda_{3}+\lambda_{4}+\lambda_{5} 				$&$ \lambda_{1} + \lambda_{2}					$		& \\ \hline
	& 5 &$	\lambda_{4}+\lambda_{5}+\lambda_{6} 				$&$ \lambda_{5} + \lambda_{6}					$		& \\ \hline
	& 6 &$	\lambda_{5}+\lambda_{6}+\lambda_{7} 				$&$ \lambda_{6} + \lambda_{7}					$		& \\ \hline
	& 7 &$	\lambda_{6}+\lambda_{7}+\lambda_{8} 				$&$ \lambda_{7} + \lambda_{8}					$		& \\ \hline \hline
4	& 1 &$	\lambda_{1}+\lambda_{2}+\lambda_{3}+\lambda_4		$&$ \lambda_{1}+\lambda_{2}+\lambda_{4} 		$		& \\ \hline
	& 3 &$	\lambda_{1}+\lambda_{2}+\lambda_{4}+\lambda_{5}		$&$ \lambda_{1}+\lambda_{2}						$	&  $\lambda_{4}$ \\ \hline
	& 2 &$	\lambda_{2}+\lambda_{3}+\lambda_{4}+\lambda_5		$&$ \lambda_{2}+\lambda_{4}+\lambda_{5}			$	& \\ \hline
	& 5 &$	\lambda_{2}+\lambda_{4}+\lambda_{5}+\lambda_6		$&$ \lambda_{2}+\lambda_{4} 					$	& $\lambda_{6}$\\ \hline
	& 4 &$	\lambda_{3}+\lambda_{4}+\lambda_{5}+\lambda_6		$&$ \lambda_{4}+\lambda_{5}+\lambda_{6} 		$		& \\ \hline
	& 6 &$	\lambda_{4}+\lambda_{5}+\lambda_{6}+\lambda_7		$&$ \lambda_{5}+\lambda_{6}+\lambda_{7} 		$		& \\ \hline
	& 7 &$	\lambda_{5}+\lambda_{6}+\lambda_{7}+\lambda_8		$&$ \lambda_{6}+\lambda_{7}+\lambda_{8} 		$		& \\ \hline \hline
5	& 3 &$	\lambda_{1}+\lambda_{2}+\lambda_{3}				$	&$ \lambda_{1}+\lambda_{2}+\lambda_{3}+\lambda_4$	& \\
	&   &$	+\lambda_{4}+\lambda_{5}						$	&$ 												$	& \\ \hline
	& 1 &$	\lambda_{2}+\lambda_{3}+2\lambda_{4}			$	&$ \lambda_{2}+\lambda_{3}+\lambda_{4}+\lambda_5$	& \\
	&   &$	+\lambda_{5}									$	&$ 												$	& \\ \hline
	& 4 &$	\lambda_{1}+\lambda_{2}+\lambda_{4}				$	&$ \lambda_{1}+\lambda_{2}		$	& $\lambda_5+\lambda_6$ \\
	&   &$	+\lambda_{5}+\lambda_{6}						$	&$ 												$	& \\ \hline
	& 2 &$	\lambda_{2}+\lambda_{3}+\lambda_{4}				$	&$\lambda_{2}+\lambda_{4}+\lambda_{5}+\lambda_6 $	& \\
	&   &$	+\lambda_{5}+\lambda_{6}						$	&$ 												$	& \\ \hline
	& 6 &$	\lambda_{2}+\lambda_{4}+\lambda_{5}				$	&$ \lambda_{2}+\lambda_{4}		$	& $\lambda_6+\lambda_7$ \\
	&   &$	+\lambda_{6}+\lambda_{7}						$	&$ 												$	& \\ \hline
	& 5 &$	\lambda_{3}+\lambda_{4}+\lambda_{5}				$	&$ \lambda_{4}+\lambda_{5}+\lambda_{6}+\lambda_7$	& \\
	&   &$	+\lambda_{6}+\lambda_{7}						$	&$ 												$	& \\ \hline
	& 7 &$	\lambda_{4}+\lambda_{5}+\lambda_{6}				$	&$ \lambda_{5}+\lambda_{6}+\lambda_{7}+\lambda_8$	& \\
	&   &$	+\lambda_{7}+\lambda_{8}						$	&$ 												$	& \\ \hline \hline
6	& 3 &$	\lambda_{1}+\lambda_{2}+\lambda_{3}				$	&$ \lambda_{1}+\lambda_{2}+\lambda_{3}			$	& \\
	&   &$	+2\lambda_{4}+\lambda_{5}						$	&$ 	+\lambda_{4}+\lambda_{5}					$	& \\ \hline
	& 5 &$	\lambda_{1}+\lambda_{2}+\lambda_{3}				$	&$ \lambda_{1}+\lambda_{2}+\lambda_{3}		$	& $\lambda_5+\lambda_6$  \\
	&   &$	+\lambda_{4}+\lambda_5+\lambda_6				$	&$ 												$	& \\ \hline
	& 2 &$	\lambda_{2}+\lambda_{3}+2\lambda_{4}			$	&$ \lambda_{2}+\lambda_{3}+2\lambda_{4}	+\lambda_{5}$	& \\
	&   &$	+\lambda_{5}+\lambda_{6}						$	&$ 												$	& \\ \hline
	& 4 &$	\lambda_{1}+\lambda_{2}+\lambda_{4}				$	&$ \lambda_{1}+\lambda_{2}+\lambda_{4}			$	& \\
	&   &$	+\lambda_{5}+\lambda_{6}+\lambda_7				$	&$ 	+\lambda_{5}+\lambda_{6}					$	& \\ \hline
	& 1 &$	\lambda_{2}+\lambda_{3}+\lambda_{4}				$	&$ \lambda_{2}+\lambda_{4}+\lambda_{5}			$	& \\
	&   &$	+\lambda_5+\lambda_{6}+\lambda_{7}				$	&$ 	+\lambda_{6}+\lambda_{7}					$	& \\ \hline
	& 7 &$	\lambda_{2}+\lambda_{4}+\lambda_{5}				$	&$ \lambda_{2}+\lambda_{4}+\lambda_{5}		$	& $\lambda_7+\lambda_8$\\
	&   &$	+\lambda_{6}+\lambda_{7}+\lambda_8				$	&$ 												$	& \\ \hline
	& 6 &$	\lambda_3+\lambda_{4}+\lambda_{5}				$	&$ \lambda_{4}+\lambda_{5}+\lambda_{6}			$	& \\
	&   &$	+\lambda_{6}+\lambda_{7}+\lambda_{8}			$	&$ 	+\lambda_{7}+\lambda_{8}					$	& \\ \hline \hline
7	& 1 &$	\lambda_{1}+2\lambda_{2}+\lambda_{3}			$	& $\lambda_{1}+\lambda_{2}+\lambda_{3}			$	& \\
	&   &$	+2\lambda_{4}+\lambda_{5}						$	&$+2\lambda_{4}+\lambda_{5}						$	& \\ \hline
	& 3 &$	\lambda_{1}+\lambda_{2}+\lambda_{3}				$	&$ \lambda_{1}+\lambda_{2}+\lambda_{3}			$	& \\
	&   &$	+2\lambda_{4}+\lambda_5+\lambda_6				$	&$ 	+\lambda_{4}+\lambda_5+\lambda_6			$	& \\ \hline
	& 2 &$	\lambda_{2}+\lambda_{3}+2\lambda_{4}			$	&$ \lambda_{2}+\lambda_{3}+2\lambda_{4}			$	& \\
	&   &$	+2\lambda_{5}+\lambda_{6}						$	&$ 	+\lambda_{5}+\lambda_{6}					$	& \\ \hline
	& 4 &$	\lambda_{1}+\lambda_{2}+\lambda_3+\lambda_{4}	$	&$ \lambda_{2}+\lambda_{3}+\lambda_{4}			$	& \\
	&   &$	+\lambda_{5}+\lambda_{6}+\lambda_7				$	&$ +\lambda_5+\lambda_{6}+\lambda_{7}			$	& \\ \hline
	& 6 &$	\lambda_{2}+\lambda_{3}+2\lambda_{4}			$	&$ \lambda_{2}+\lambda_{3}+2\lambda_{4}	+\lambda_5 $	& $\lambda_7$ \\
	&   &$	+\lambda_5+\lambda_{6}+\lambda_{7}				$	&$ 												$	& \\ \hline
	& 5 &$	\lambda_1+\lambda_{2}+\lambda_{4}+\lambda_{5}	$	&$ \lambda_{2}+\lambda_{4}+\lambda_{5}			$	& \\
	&   &$	+\lambda_{6}+\lambda_{7}+\lambda_8				$	&$ +\lambda_{6}+\lambda_{7}+\lambda_8			$	& \\ \hline
	& 7 &$	\lambda_2+\lambda_3+\lambda_{4}+\lambda_{5}		$	&$ \lambda_3+\lambda_{4}+\lambda_{5}			$	& \\
	&   &$	+\lambda_{6}+\lambda_{7}+\lambda_{8}			$	&$ 	+\lambda_{6}+\lambda_{7}+\lambda_{8}		$	& \\ \hline \hline
8	& 5 &$	\lambda_{1}+2\lambda_{2}+\lambda_{3}			$	&$ \lambda_{1}+2\lambda_{2}+\lambda_{3}			$	& \\
	&   &$	+2\lambda_{4}+\lambda_5+\lambda_6				$	&$ 	+2\lambda_{4}+\lambda_{5}					$	& \\ \hline
	& 1 &$	\lambda_{1}+\lambda_{2}+\lambda_{3}				$	&$ \lambda_{2}+\lambda_{3}+2\lambda_{4}			$	& \\
	&   &$	+2\lambda_4+2\lambda_{5}+\lambda_{6}			$	&$ +2\lambda_{5}+\lambda_{6}					$	& \\ \hline
	& 3 &$	\lambda_{1}+\lambda_{2}+\lambda_3+2\lambda_{4}	$	&$ \lambda_{1}+\lambda_{2}+\lambda_3+\lambda_{4}$	& \\
	&   &$	+\lambda_{5}+\lambda_{6}+\lambda_7				$	&$ 	+\lambda_{5}+\lambda_{6}+\lambda_7			$	& \\ \hline
	& 2 &$	\lambda_{2}+\lambda_{3}+2\lambda_{4}			$	&$ \lambda_{2}+\lambda_{3}+2\lambda_{4}			$	& \\
	&   &$	+2\lambda_5+\lambda_{6}+\lambda_{7}				$	&$ 	+\lambda_5+\lambda_{6}+\lambda_{7}			$	& \\ \hline
	& 6 &$	\lambda_1+\lambda_{2}+\lambda_{3}+\lambda_{4}	$	&$ \lambda_1+\lambda_{2}+\lambda_{4}+\lambda_{5}$	& \\
	&   &$	+\lambda_5+\lambda_{6}+\lambda_{7}+\lambda_8	$	&$ +\lambda_{6}+\lambda_{7}+\lambda_8			$	& \\ \hline
	& 4 &$	\lambda_2+\lambda_3+2\lambda_{4}+\lambda_{5}	$	&$ \lambda_2+\lambda_3+\lambda_{4}+\lambda_{5}	$	& \\
	&   &$	+\lambda_{6}+\lambda_{7}+\lambda_{8}			$	&$ 	+\lambda_{6}+\lambda_{7}+\lambda_{8}		$	& \\ \hline \hline
9	& 4 &$	\lambda_{1}+2\lambda_{2}+\lambda_{3}			$	&$ \lambda_{1}+2\lambda_{2}+\lambda_{3}			$	& \\
	&   &$	+2\lambda_{4}+2\lambda_5+\lambda_6				$	&$ 	+2\lambda_{4}+\lambda_5+\lambda_6			$	& \\ \hline
	& 5 &$	\lambda_{1}+2\lambda_{2}+\lambda_{3}+2\lambda_4	$	&$ \lambda_{1}+2\lambda_{2}+\lambda_{3}			$	&$\lambda_7$ \\
	&   &$	+\lambda_{5}+\lambda_{6}+\lambda_7				$	&$ 		+2\lambda_4+\lambda_5					$	& \\ \hline
	& 3 &$	\lambda_{1}+\lambda_{2}+\lambda_3+2\lambda_{4}	$	&$ \lambda_{1}+\lambda_{2}+\lambda_3+2\lambda_{4}$	& \\
	&   &$	+2\lambda_{5}+\lambda_{6}+\lambda_7				$	&$ +\lambda_{5}+\lambda_{6}+\lambda_7			$	& \\ \hline
	& 6 &$	\lambda_1+\lambda_{2}+\lambda_{3}+2\lambda_{4}	$	&$ \lambda_1+\lambda_{2}+\lambda_{3}+\lambda_{4}$	& \\
	&   &$	+\lambda_5+\lambda_{6}+\lambda_{7}+\lambda_8	$	&$ 	+\lambda_5+\lambda_{6}+\lambda_{7}+\lambda_8$	& \\ \hline
	& 1 &$	\lambda_2+\lambda_{3}+2\lambda_{4}+2\lambda_{5}	$	&$ \lambda_{2}+\lambda_{3}+2\lambda_{4}			$	& \\
	&   &$	+2\lambda_6+\lambda_{7}							$	&$ 	+2\lambda_5+\lambda_{6}+\lambda_{7}			$	& \\ \hline
	& 2 &$	\lambda_2+\lambda_3+2\lambda_{4}+2\lambda_{5}	$	&$ \lambda_2+\lambda_3+2\lambda_{4}+\lambda_{5} $	& \\
	&   &$	+\lambda_{6}+\lambda_{7}+\lambda_{8}			$	&$ 	+\lambda_{6}+\lambda_{7}+\lambda_{8}		$	& \\ \hline \hline
10	& 1 &$	\lambda_{1}+2\lambda_{2}+\lambda_{3}			$	&$ \lambda_{1}+2\lambda_{2}+\lambda_{3}			$	& \\
	&   &$	+3\lambda_{4}+2\lambda_5+\lambda_6				$	&$ 	+2\lambda_{4}+2\lambda_5+\lambda_6			$	& \\ \hline
	& 2 &$	\lambda_{1}+2\lambda_{2}+\lambda_{3}+2\lambda_4	$	&$ \lambda_{1}+2\lambda_{2}+\lambda_{3}+2\lambda_4$	& \\
	&   &$	+2\lambda_{5}+\lambda_{6}+\lambda_7				$	&$ 	+\lambda_{5}+\lambda_{6}+\lambda_7			$	& \\ \hline
	& 3 &$	\lambda_{1}+2\lambda_{2}+\lambda_3+2\lambda_{4}	$	&$ 	\lambda_1+\lambda_{2}+\lambda_{3}+2\lambda_{4}$	& \\
	&   &$	+\lambda_{5}+\lambda_{6}+\lambda_7+\lambda_8	$	&$ +\lambda_5+\lambda_{6}+\lambda_{7}+\lambda_8	$	& \\ \hline\newpage
	& 6 &$	\lambda_1+\lambda_{2}+\lambda_{3}+2\lambda_{4}	$	&$ \lambda_{1}+\lambda_{2}+\lambda_{3}+2\lambda_4$	& $\lambda_6$ \\
	&   &$	+2\lambda_5+2\lambda_{6}+\lambda_{7}			$	&$ 		+\lambda_5+\lambda_6+\lambda_7			$	& \\ \hline
	& 4 &$	\lambda_1+\lambda_{2}+\lambda_{3}+2\lambda_{4}	$	&$ \lambda_2+\lambda_3+2\lambda_{4}+2\lambda_{5}$	& \\
	&   &$	+2\lambda_5+\lambda_{6}+\lambda_7+\lambda_8		$	&$ 	+\lambda_{6}+\lambda_{7}+\lambda_{8}		$	& \\ \hline
	& 5 &$	\lambda_2+\lambda_3+2\lambda_{4}+2\lambda_{5}	$	&$ 	\lambda_2+\lambda_{3}+2\lambda_{4}+2\lambda_{5}$	& \\
	&   &$	+2\lambda_{6}+\lambda_{7}+\lambda_{8}			$	&$ 	+2\lambda_6+\lambda_{7}						$	& \\ \hline \hline
11	& 1 &$	\lambda_{1}+2\lambda_{2}+2\lambda_{3}			$	&$ \lambda_{1}+2\lambda_{2}+\lambda_{3}			$	& \\
	&   &$	+3\lambda_{4}+2\lambda_5+\lambda_6				$	&$ 	+3\lambda_{4}+2\lambda_5+\lambda_6			$	& \\ \hline
	& 2 &$	\lambda_{1}+2\lambda_{2}+\lambda_{3}+3\lambda_4	$	&$ \lambda_{1}+2\lambda_{2}+\lambda_{3}+2\lambda_4	$	& \\
	&   &$	+2\lambda_{5}+\lambda_{6}+\lambda_7				$	&$ 	+2\lambda_{5}+\lambda_{6}+\lambda_7			$	& \\ \hline
	& 3 &$	\lambda_{1}+2\lambda_{2}+\lambda_3+2\lambda_{4}	$	&$ \lambda_1+\lambda_{2}+\lambda_{3}+2\lambda_{4}$	& \\
	&   &$	+2\lambda_{5}+2\lambda_{6}+\lambda_7			$	&$ 	+2\lambda_5+2\lambda_{6}+\lambda_{7}		$	& \\ \hline
	& 6 &$	\lambda_1+2\lambda_{2}+\lambda_{3}+2\lambda_{4}	$	&$ \lambda_{1}+2\lambda_{2}+\lambda_3+2\lambda_{4}$	& \\
	&   &$	+2\lambda_5+\lambda_{6}+\lambda_{7}+\lambda_8	$	&$ 	+\lambda_{5}+\lambda_{6}+\lambda_7+\lambda_8$	& \\ \hline
	& 5 &$	\lambda_1+\lambda_{2}+\lambda_{3}+2\lambda_{4}	$	&$ 	\lambda_1+\lambda_{2}+\lambda_{3}+2\lambda_{4}$	& \\
	&   &$	+2\lambda_5+2\lambda_{6}+\lambda_7+\lambda_8	$	&$ 	+2\lambda_5+\lambda_{6}+\lambda_7+\lambda_8	$	& \\ \hline
	& 4 &$	\lambda_2+\lambda_3+2\lambda_{4}+2\lambda_{5}	$	&$ \lambda_2+\lambda_3+2\lambda_{4}+2\lambda_{5}$	& \\
	&   &$	+2\lambda_{6}+2\lambda_{7}+\lambda_{8}			$	&$ 	+2\lambda_{6}+\lambda_{7}+\lambda_{8}		$	& \\ \hline \hline
12	& 1 &$	\lambda_{1}+2\lambda_{2}+2\lambda_{3}			$	&$ \lambda_{1}+2\lambda_{2}+2\lambda_{3}		$	& \\
	&   &$	+3\lambda_{4}+2\lambda_5+\lambda_6+\lambda_7	$	&$ +3\lambda_{4}+2\lambda_5+\lambda_6			$	& \\ \hline
	& 2 &$	\lambda_{1}+2\lambda_{2}+\lambda_{3}+3\lambda_4	$	&$ \lambda_{1}+2\lambda_{2}+\lambda_3+2\lambda_{4}$	& \\
	&   &$	+2\lambda_{5}+2\lambda_{6}+\lambda_7			$	&$ 	+2\lambda_{5}+2\lambda_{6}+\lambda_7		$	& \\ \hline
	& 3 &$	\lambda_1+2\lambda_{2}+\lambda_{3}+3\lambda_{4}	$	&$ \lambda_1+2\lambda_{2}+\lambda_{3}+2\lambda_{4}$	& \\
	&   &$	+2\lambda_5+\lambda_{6}+\lambda_{7}+\lambda_8	$	&$ +2\lambda_5+\lambda_{6}+\lambda_{7}+\lambda_8$	& \\ \hline
	& 4 &$	\lambda_1+2\lambda_{2}+\lambda_{3}+2\lambda_{4}	$	&$ \lambda_1+\lambda_{2}+\lambda_{3}+2\lambda_{4}$	& \\
	&   &$	+2\lambda_5+2\lambda_{6}+\lambda_7+\lambda_8	$	&$ +2\lambda_5+2\lambda_{6}+\lambda_7+\lambda_8	$	& \\ \hline
	& 5 &$	\lambda_1+\lambda_2+\lambda_3+2\lambda_{4}		$	&$ \lambda_2+\lambda_3+2\lambda_{4}+2\lambda_{5}$	& \\
	&   &$	+2\lambda_{5}+2\lambda_{6}+2\lambda_{7}+\lambda_{8}$&$ 	+2\lambda_{6}+2\lambda_{7}+\lambda_{8}		$	& \\ \hline \hline
13	& 2 &$	\lambda_{1}+2\lambda_{2}+2\lambda_{3}			$	&$\lambda_{1}+2\lambda_{2}+2\lambda_{3}			$	& \\
	&   &$	+3\lambda_{4}+2\lambda_5+2\lambda_6+\lambda_7	$	&$ 	+3\lambda_{4}+2\lambda_5+\lambda_6+\lambda_7$	& \\ \hline
	& 3 &$	\lambda_1+2\lambda_{2}+2\lambda_{3}+3\lambda_{4}$	&$ \lambda_1+2\lambda_{2}+\lambda_{3}+3\lambda_{4}$	& \\
	&   &$	+2\lambda_5+\lambda_{6}+\lambda_{7}+\lambda_8	$	&$ +2\lambda_5+\lambda_{6}+\lambda_{7}+\lambda_8$	& \\ \hline
	& 1 &$	\lambda_{1}+2\lambda_{2}+\lambda_{3}+3\lambda_4	$	&$ \lambda_{1}+2\lambda_{2}+\lambda_{3}+3\lambda_4$	& \\
	&   &$	+3\lambda_{5}+2\lambda_{6}+\lambda_7			$	&$ 	+2\lambda_{5}+2\lambda_{6}+\lambda_7		$	& \\ \hline
	& 4 &$	\lambda_1+2\lambda_{2}+\lambda_{3}+3\lambda_{4}	$	&$ \lambda_1+2\lambda_{2}+\lambda_{3}+2\lambda_{4}$	& \\
	&   &$	+2\lambda_5+2\lambda_{6}+\lambda_{7}+\lambda_8	$	&$ 	+2\lambda_5+2\lambda_{6}+\lambda_7+\lambda_8$	& \\ \hline
	& 5 &$	\lambda_1+2\lambda_2+\lambda_3+2\lambda_{4}		$	&$	\lambda_1+\lambda_2+\lambda_3+2\lambda_{4}	$	& \\
	&   &$	+2\lambda_{5}+2\lambda_{6}+2\lambda_{7}+\lambda_{8}$&$+2\lambda_{5}+2\lambda_{6}+2\lambda_{7}+\lambda_{8}$	& \\ \hline \hline
14	& 1 &$	\lambda_{1}+2\lambda_{2}+2\lambda_{3}			$	&$ \lambda_{1}+2\lambda_{2}+2\lambda_{3}		$	& \\
	&   &$	+3\lambda_{4}+3\lambda_5+2\lambda_6+\lambda_7	$	&$ +3\lambda_{4}+2\lambda_5+2\lambda_6+\lambda_7$	& \\ \hline
	& 2 &$	\lambda_1+2\lambda_{2}+2\lambda_{3}+3\lambda_{4}$	&$ \lambda_1+2\lambda_{2}+2\lambda_{3}+3\lambda_{4}$	& \\
	&   &$	+2\lambda_5+2\lambda_{6}+\lambda_{7}+\lambda_8	$	&$ 	+2\lambda_5+\lambda_{6}+\lambda_{7}+\lambda_8$	& \\ \hline
	& 3 &$	\lambda_{1}+2\lambda_{2}+\lambda_{3}+3\lambda_4	$	&$ \lambda_1+2\lambda_{2}+\lambda_{3}+3\lambda_{4}$	& \\
	&   &$	+3\lambda_{5}+2\lambda_{6}+\lambda_7+\lambda_8	$	&$ 	+2\lambda_5+2\lambda_{6}+\lambda_{7}+\lambda_8$	& \\ \hline
	& 4 &$	\lambda_1+2\lambda_{2}+\lambda_{3}+3\lambda_{4}	$	&$ \lambda_1+2\lambda_2+\lambda_3+2\lambda_{4}	$	& \\
	&   &$	+2\lambda_5+2\lambda_{6}+2\lambda_{7}+\lambda_8	$	&$ 	+2\lambda_{5}+2\lambda_{6}+2\lambda_{7}+\lambda_{8}$	& \\ \hline \hline\newpage
15	& 1 &$	\lambda_{1}+2\lambda_{2}+2\lambda_{3}			$	&$ \lambda_{1}+2\lambda_{2}+2\lambda_{3}			$	& \\
	&   &$	+4\lambda_{4}+3\lambda_5+2\lambda_6+\lambda_7	$	&$ 	+3\lambda_{4}+3\lambda_5+2\lambda_6+\lambda_7$	& \\ \hline
	& 2 &$	\lambda_1+2\lambda_{2}+2\lambda_{3}+3\lambda_{4}$	&$\lambda_1+2\lambda_{2}+2\lambda_{3}+3\lambda_{4}$	& \\
	&   &$	+3\lambda_5+2\lambda_{6}+\lambda_{7}+\lambda_8	$	&$ +2\lambda_5+2\lambda_{6}+\lambda_{7}+\lambda_8$	& \\ \hline
	& 4 &$	\lambda_1+2\lambda_{2}+2\lambda_{3}+3\lambda_{4}$	&$ \lambda_1+2\lambda_{2}+2\lambda_{3}+3\lambda_{4}$	& $\lambda_7$ \\
	&   &$	+2\lambda_5+2\lambda_{6}+2\lambda_{7}+\lambda_8	$	&$ 	+2\lambda_5+\lambda_{6}+\lambda_{7}+\lambda_8$	& \\ \hline
	& 3 &$	\lambda_1+2\lambda_{2}+\lambda_{3}+3\lambda_{4}	$	&$ \lambda_1+2\lambda_{2}+\lambda_{3}+3\lambda_{4}$	& \\
	&   &$	+3\lambda_5+2\lambda_{6}+2\lambda_{7}+\lambda_8	$	&$ 	+2\lambda_5+2\lambda_{6}+2\lambda_{7}+\lambda_8$	& \\ \hline \hline
16	& 1 &$	\lambda_{1}+3\lambda_{2}+2\lambda_{3}			$	&$ 	\lambda_{1}+2\lambda_{2}+2\lambda_{3}		$	& \\
	&   &$	+4\lambda_{4}+3\lambda_5+2\lambda_6+\lambda_7	$	&$ +4\lambda_{4}+3\lambda_5+2\lambda_6+\lambda_7$	& \\ \hline
	& 2 &$	\lambda_1+2\lambda_{2}+2\lambda_{3}+4\lambda_{4}$	&$ \lambda_1+2\lambda_{2}+2\lambda_{3}+3\lambda_{4}$	& \\
	&   &$	+3\lambda_5+2\lambda_{6}+\lambda_{7}+\lambda_8	$	&$ 	+3\lambda_5+2\lambda_{6}+\lambda_{7}+\lambda_8$	& \\ \hline
	& 4 &$	\lambda_1+2\lambda_{2}+2\lambda_{3}+3\lambda_{4}$	&$ 	\lambda_1+2\lambda_{2}+2\lambda_{3}+3\lambda_{4}$	& \\
	&   &$	+3\lambda_5+2\lambda_{6}+2\lambda_{7}+\lambda_8	$	&$ 	+2\lambda_5+2\lambda_{6}+2\lambda_{7}+\lambda_8$	& \\ \hline
	& 3 &$	\lambda_1+2\lambda_{2}+\lambda_{3}+3\lambda_{4}	$	&$ \lambda_1+2\lambda_{2}+\lambda_{3}+3\lambda_{4}$	& \\
	&   &$	+3\lambda_5+3\lambda_{6}+2\lambda_{7}+\lambda_8	$	&$ 	+3\lambda_5+2\lambda_{6}+2\lambda_{7}+\lambda_8	$	& \\ \hline \hline
17	& 1 &$	2\lambda_{1}+3\lambda_{2}+2\lambda_{3}			$	&$ \lambda_{1}+3\lambda_{2}+2\lambda_{3}		$	& \\
	&   &$	+4\lambda_{4}+3\lambda_5+2\lambda_6+\lambda_7	$	&$ 	+4\lambda_{4}+3\lambda_5+2\lambda_6+\lambda_7$	& \\ \hline
	& 2 &$	\lambda_1+3\lambda_{2}+2\lambda_{3}+4\lambda_{4}$	&$ \lambda_1+2\lambda_{2}+2\lambda_{3}+4\lambda_{4}$	& \\
	&   &$	+3\lambda_5+2\lambda_{6}+\lambda_{7}+\lambda_8	$	&$ 	+3\lambda_5+2\lambda_{6}+\lambda_{7}+\lambda_8$	& \\ \hline
	& 3 &$	\lambda_1+2\lambda_{2}+2\lambda_{3}+4\lambda_{4}$	&$ \lambda_1+2\lambda_{2}+2\lambda_{3}+3\lambda_{4}$	& \\
	&   &$	+3\lambda_5+2\lambda_{6}+2\lambda_{7}+\lambda_8	$	&$ 	+3\lambda_5+2\lambda_{6}+2\lambda_{7}+\lambda_8$	& \\ \hline
	& 4 &$	\lambda_1+2\lambda_{2}+2\lambda_{3}+3\lambda_{4}$	&$ \lambda_1+2\lambda_{2}+\lambda_{3}+3\lambda_{4}$	& \\
	&   &$	+3\lambda_5+3\lambda_{6}+2\lambda_{7}+\lambda_8	$	&$ 	+3\lambda_5+3\lambda_{6}+2\lambda_{7}+\lambda_8$	& \\ \hline \hline
18	& 1 &$	2\lambda_{1}+3\lambda_{2}+2\lambda_{3}+4\lambda_{4}$&$ \lambda_1+3\lambda_{2}+2\lambda_{3}+4\lambda_{4}$	& \\
	&   &$	+3\lambda_5+2\lambda_6+\lambda_7+\lambda_8		$	&$ +3\lambda_5+2\lambda_{6}+\lambda_{7}+\lambda_8$	& \\ \hline
	& 2 &$	\lambda_1+3\lambda_{2}+2\lambda_{3}+4\lambda_{4}$	&$ \lambda_1+2\lambda_{2}+2\lambda_{3}+4\lambda_{4}$	& \\
	&   &$	+3\lambda_5+2\lambda_{6}+2\lambda_{7}+\lambda_8	$	&$ 	+3\lambda_5+2\lambda_{6}+2\lambda_{7}+\lambda_8$	& \\ \hline
	& 3 &$	\lambda_1+2\lambda_{2}+2\lambda_{3}+4\lambda_{4}$	&$ \lambda_1+2\lambda_{2}+2\lambda_{3}+3\lambda_{4}$	& \\
	&   &$	+3\lambda_5+3\lambda_{6}+2\lambda_{7}+\lambda_8	$	&$ 	+3\lambda_5+3\lambda_{6}+2\lambda_{7}+\lambda_8	$	& \\ \hline \hline
19	& 3 &$	2\lambda_{1}+3\lambda_{2}+2\lambda_{3}+4\lambda_{4}$&$2\lambda_{1}+3\lambda_{2}+2\lambda_{3}+4\lambda_{4}$	& \\
	&   &$	+3\lambda_5+2\lambda_6+2\lambda_7+\lambda_8		$	&$ +3\lambda_5+2\lambda_6+\lambda_7+\lambda_8	$	& \\ \hline
	& 2 &$	\lambda_1+3\lambda_{2}+2\lambda_{3}+4\lambda_{4}$	&$ 	\lambda_1+3\lambda_{2}+2\lambda_{3}+4\lambda_{4}$	& \\
	&   &$	+3\lambda_5+3\lambda_{6}+2\lambda_{7}+\lambda_8	$	&$ +3\lambda_5+2\lambda_{6}+2\lambda_{7}+\lambda_8	$	& \\ \hline
	& 1 &$	\lambda_1+2\lambda_{2}+2\lambda_{3}+4\lambda_{4}$	&$ \lambda_1+2\lambda_{2}+2\lambda_{3}+4\lambda_{4}$	& \\
	&   &$	+4\lambda_5+3\lambda_{6}+2\lambda_{7}+\lambda_8	$	&$ 	+3\lambda_5+3\lambda_{6}+2\lambda_{7}+\lambda_8$	& \\ \hline \hline
20	& 1 &$	2\lambda_{1}+3\lambda_{2}+2\lambda_{3}+4\lambda_{4}$&$ 	\lambda_1+3\lambda_{2}+2\lambda_{3}+4\lambda_{4}$	& \\
	&   &$	+3\lambda_5+3\lambda_6+2\lambda_7+\lambda_8		$	&$	+3\lambda_5+3\lambda_{6}+2\lambda_{7}+\lambda_8$	& \\ \hline
	& 2 &$	\lambda_1+3\lambda_{2}+2\lambda_{3}+4\lambda_{4}$	&$ \lambda_1+2\lambda_{2}+2\lambda_{3}+4\lambda_{4}$	& \\
	&   &$	+4\lambda_5+3\lambda_{6}+2\lambda_{7}+\lambda_8	$	&$ +4\lambda_5+3\lambda_{6}+2\lambda_{7}+\lambda_8$	& \\ \hline \hline
21	& 2 &$	2\lambda_{1}+3\lambda_{2}+2\lambda_{3}+4\lambda_{4}$&$ 2\lambda_{1}+3\lambda_{2}+2\lambda_{3}+4\lambda_{4}$	& \\
	&   &$	+4\lambda_5+3\lambda_6+2\lambda_7+\lambda_8		$	&$ +3\lambda_5+3\lambda_6+2\lambda_7+\lambda_8	$	& \\ \hline
	& 1 &$	\lambda_1+3\lambda_{2}+2\lambda_{3}+5\lambda_{4}$	&$ 	\lambda_1+3\lambda_{2}+2\lambda_{3}+4\lambda_{4}$	& \\
	&   &$	+4\lambda_5+3\lambda_{6}+2\lambda_{7}+\lambda_8	$	&$ 	+4\lambda_5+3\lambda_{6}+2\lambda_{7}+\lambda_8	$	& \\ \hline \hline
22	& 2 &$	2\lambda_{1}+3\lambda_{2}+2\lambda_{3}+5\lambda_{4}$&$ 2\lambda_{1}+3\lambda_{2}+2\lambda_{3}+4\lambda_{4}$	& \\
	&   &$	+4\lambda_5+3\lambda_6+2\lambda_7+\lambda_8		$	&$ 	+4\lambda_5+3\lambda_6+2\lambda_7+\lambda_8	$	& \\ \hline\newpage
	& 1 &$	\lambda_1+3\lambda_{2}+3\lambda_{3}+5\lambda_{4}$	&$ 	\lambda_1+3\lambda_{2}+2\lambda_{3}+5\lambda_{4}$	& \\
	&   &$	+4\lambda_5+3\lambda_{6}+2\lambda_{7}+\lambda_8	$	&$ 	+4\lambda_5+3\lambda_{6}+2\lambda_{7}+\lambda_8$	& \\ \hline \hline
23	& 1 &$	2\lambda_{1}+4\lambda_{2}+2\lambda_{3}+5\lambda_{4}$&$ 2\lambda_{1}+3\lambda_{2}+2\lambda_{3}+5\lambda_{4}$	& \\
	&   &$	+4\lambda_5+3\lambda_6+2\lambda_7+\lambda_8		$	&$ +4\lambda_5+3\lambda_6+2\lambda_7+\lambda_8		$	& \\ \hline
	& 2 &$	2\lambda_1+3\lambda_{2}+3\lambda_{3}+5\lambda_{4}$	&$ \lambda_1+3\lambda_{2}+3\lambda_{3}+5\lambda_{4}$	& \\
	&   &$	+4\lambda_5+3\lambda_{6}+2\lambda_{7}+\lambda_8	$	&$ 	+4\lambda_5+3\lambda_{6}+2\lambda_{7}+\lambda_8	$	& \\ \hline \hline
24	& 1 &$	2\lambda_{1}+4\lambda_{2}+3\lambda_{3}+5\lambda_{4}$&$ 2\lambda_{1}+4\lambda_{2}+2\lambda_{3}+5\lambda_{4}$	& \\
	&   &$	+4\lambda_5+3\lambda_6+2\lambda_7+\lambda_8		$	&$ +4\lambda_5+3\lambda_6+2\lambda_7+\lambda_8	$	& \\ \hline \hline
25	& 1 &$	2\lambda_{1}+4\lambda_{2}+3\lambda_{3}+6\lambda_{4}$&$ 2\lambda_{1}+4\lambda_{2}+3\lambda_{3}+5\lambda_{4}	$	& \\
	&   &$	+4\lambda_5+3\lambda_6+2\lambda_7+\lambda_8		$	&$ 	+4\lambda_5+3\lambda_6+2\lambda_7+\lambda_8		$	& \\ \hline \hline
26	& 1 &$	2\lambda_{1}+4\lambda_{2}+3\lambda_{3}+6\lambda_{4}$&$ 2\lambda_{1}+4\lambda_{2}+3\lambda_{3}+6\lambda_{4}$	& \\
	&   &$	+5\lambda_5+3\lambda_6+2\lambda_7+\lambda_8		$	&$ 	+4\lambda_5+3\lambda_6+2\lambda_7+\lambda_8	$	& \\ \hline \hline
27	& 1 &$	2\lambda_{1}+4\lambda_{2}+3\lambda_{3}+6\lambda_{4}$&$2\lambda_{1}+4\lambda_{2}+3\lambda_{3}+6\lambda_{4}$	& \\
	&   &$	+5\lambda_5+4\lambda_6+2\lambda_7+\lambda_8		$	&$ 	+5\lambda_5+3\lambda_6+2\lambda_7+\lambda_8		$	& \\ \hline \hline
28	& 1 &$	2\lambda_{1}+4\lambda_{2}+3\lambda_{3}+6\lambda_{4}$&$ 2\lambda_{1}+4\lambda_{2}+3\lambda_{3}+6\lambda_{4}$	& \\
	&   &$	+5\lambda_5+4\lambda_6+3\lambda_7+\lambda_8		$	&$ 	+5\lambda_5+4\lambda_6+2\lambda_7+\lambda_8		$	& \\ \hline \hline
29	& 1 &$	2\lambda_{1}+4\lambda_{2}+3\lambda_{3}+6\lambda_{4}$&$ 2\lambda_{1}+4\lambda_{2}+3\lambda_{3}+6\lambda_{4}$	& \\
	&   &$	+5\lambda_5+4\lambda_6+3\lambda_7+2\lambda_8	$	&$ 	+5\lambda_5+4\lambda_6+3\lambda_7+\lambda_8	$	& \\ \hline 
\end{longtable}

\begin{table}[h!]
\caption{The simple case for root systems of type $F_{4}$}\label{table:F_4}
$$\begin{array}{|l|l|c|c|c|}
\hline

\mbox{Height}& \mbox{Order} & \lambda & \mu\mbox{ or }\mu_{1} & \mu_{2} \mbox{ (if needed)}\\ \hline \hline
2	& 1 &	\lambda_{1} + \lambda_{2}							& \lambda_{2}										& \\ \hline
	& 2 &	\lambda_{2} + \lambda_{3} 							& \lambda_{3} 										& \\ \hline
	& 3 &	\lambda_{3} + \lambda_{4}							& \lambda_{4} 										& \\ \hline \hline
3	& 2 &	\lambda_{1}+\lambda_{2}+\lambda_{3} 				& \lambda_{1} + \lambda_{2}							& \\ \hline
	& 1 &	\lambda_{2}+2\lambda_{3}							& \lambda_{2} + \lambda_{3}							& \\ \hline
	& 3 &	\lambda_{2}+\lambda_{3}+\lambda_{4}					& \lambda_{3} + \lambda_{4}							& \\ \hline \hline
4	& 1 &	\lambda_{1}+\lambda_{2}+2\lambda_{3}				& \lambda_{1}+\lambda_{2}+\lambda_{3} 				& \\ \hline
	& 3 &	\lambda_{1}+\lambda_{2}+\lambda_{3}+\lambda_{4}		& \lambda_{1}+\lambda_{2}							&  \lambda_{4} \\ \hline
	& 2 &	\lambda_{2}+2\lambda_{3}+\lambda_{4}				& \lambda_{2}+\lambda_{3}+\lambda_{4}				& \\ \hline \hline
5	& 1 &	\lambda_{1}+2\lambda_{2}+2\lambda_{3}				& \lambda_{1}+\lambda_{2}+2\lambda_{3}				& \\ \hline
	& 3 &	\lambda_{1}+\lambda_{2}+2\lambda_{3}+\lambda_{4}	& \lambda_{1}+\lambda_{2}+\lambda_{3}+\lambda_{4}	& \\ \hline
	& 2 &	\lambda_{2}+2\lambda_{3} +2\lambda_{4}				& \lambda_{2}+2\lambda_{3}+\lambda_{4}				& \\ \hline \hline
6	& 1 &	\lambda_{1}+2\lambda_{2}+2\lambda_{3}+\lambda_{4}	& \lambda_{1}+2\lambda_{2}+2\lambda_{3}				& \\ \hline	
	& 2 &	\lambda_{1}+\lambda_{2}+2\lambda_{3}+2\lambda_{4}	& \lambda_{2}+2\lambda_{3} +2\lambda_{4}			& \\ \hline \hline
7	& 1 &	\lambda_{1}+2\lambda_{2}+3\lambda_{3}+\lambda_{4}	& \lambda_{1}+2\lambda_{2}+2\lambda_{3}+\lambda_{4}	& \\ \hline
	& 2 &	\lambda_{1}+2\lambda_{2}+2\lambda_{3}+2\lambda_{4}	& \lambda_{1}+\lambda_{2}+2\lambda_{3}+2\lambda_{4}	& \\ \hline \hline
8	& 1 &	\lambda_{1}+2\lambda_{2}+3\lambda_{3}+2\lambda_{4}	& \lambda_{1}+2\lambda_{2}+3\lambda_{3}+\lambda_{4}	& \\ \hline \hline
9	& 1 &	\lambda_{1}+3\lambda_{2}+3\lambda_{3}+2\lambda_{4}	& \lambda_{1}+2\lambda_{2}+3\lambda_{3}+2\lambda_{4}& \\ \hline \hline
10	& 1 &	\lambda_{1}+3\lambda_{2}+4\lambda_{3}+2\lambda_{4}	& \lambda_{1}+2\lambda_{2}+3\lambda_{3}+2\lambda_{4}& \\ \hline \hline
11	& 1 &	2\lambda_{1}+3\lambda_{2}+4\lambda_{3}+2\lambda_{4}	& \lambda_{1}+3\lambda_{2}+4\lambda_{3}+2\lambda_{4}& \\ \hline

\end{array}
$$
\end{table}

\begin{figure}[h!]
   \labellist
    \small\hair 5pt
       \pinlabel $\lambda_{1}$ [r] at 1 642
       \pinlabel $\lambda_{2}$ [r] at 82 642
       \pinlabel $\lambda_{3}$ [l] at 162 642
       \pinlabel $\lambda_{4}$ [l] at 242 642
       \tiny
       \pinlabel $\lambda_1+\lambda_2$ [r] at 42 577
       \pinlabel $\lambda_1+\lambda_2+\lambda_3$ [r] at 41 513
       \pinlabel $\lambda_1+\lambda_2+2\lambda_3$ [r] at 41 449
       \pinlabel $\lambda_1+2\lambda_2+2\lambda_3$ [r] at 41 386
   \endlabellist

   \centering
   \includegraphics[width=3.4cm]{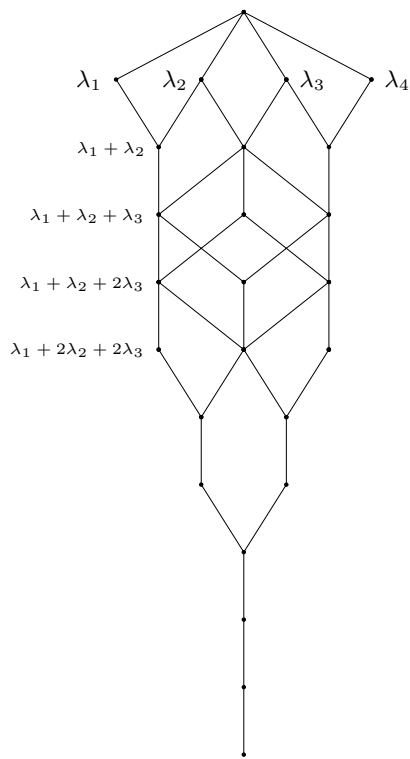}
   \caption[The root system $F_4$]{A graphical depiction of the positive roots in $F_{4}$. The horizontal levels correspond to heights. The elements of height one are labelled, and one more in each height up to $5$, but the rest are not. When you move down a height, following an edge corresponds to adding a simple root.}
   \label{fig:F_4}
\end{figure}

\begin{figure}[h!]
   \labellist
    \small\hair 5pt
     \pinlabel $\lambda_{1}$ [b] at 3 174
     \pinlabel $\lambda_{2}$ [b] at 98 174
     \pinlabel $\lambda_{1}+\lambda_{2}$ [l] at 50 131
     \pinlabel $2\lambda_{1}+\lambda_{2}$ [l] at 50 88
     \pinlabel $3\lambda_{1}+\lambda_{2}$ [l] at 50 45
     \pinlabel $3\lambda_{1}+2\lambda_{2}$ [l] at 50 1
   \endlabellist

   \centering
   \includegraphics[width=3cm]{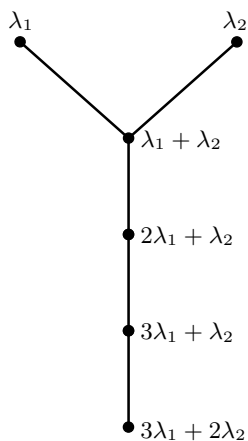}
   \caption[The root system $G_2$]{A graphical depiction of the positive roots in $G_{2}$.}
   \label{fig:G_2}
\end{figure}

\bibliography{bibliography}{}
\bibliographystyle{alpha}

\end{document}